\def\<{\langle}
\def\>{\rangle}
\def\c{\cdot}
\newtheorem{thm}{Theorem}[section]
\newtheorem{cor}[thm]{Corollary}
\newtheorem{pro}[thm]{Proposition}
\newtheorem{ex}[thm]{Example}
\theoremstyle{definition}
\newtheorem{defi}{Definition}[section]
\theoremstyle{remark}
\newtheorem{rmk}{Remark}[section]
\begin{document}
\title{ \bf
Maurer-Cartan  type  cohomology on generalized Reynolds operators and  NS-structures on Lie triple systems}
\author{\bf R.Gharbi, S. Mabrouk, A. Makhlouf}
\author{{Rahma Gharbi$^{1,3}$
 \footnote { E-mail: Rahma.gharbi@uha.fr}
,\  Sami Mabrouk$^{2}$
 \footnote { E-mail: mabrouksami00@yahoo.fr }
\ and Abdenacer Makhlouf$^{1}$
 \footnote { E-mail: abdenacer.makhlouf@uha.fr $($Corresponding author$)$}
}\\
{\small 1. ~ IRIMAS - Département de Mathématiques, 18, rue des frères Lumière,
F-68093 Mulhouse, France} \\
{\small 2.  University of Gafsa, Faculty of Sciences, 2112 Gafsa, Tunisia}\\
{\small 3.  University of Sfax, Faculty of Sciences Sfax,  BP
1171, 3038 Sfax, Tunisia}\\
}
\date{}
\maketitle
\begin{abstract}
The purpose  of this paper is to introduce and study the notion of   generalized Reynolds operators on Lie triple systems with representations (Abbr.  \textsf{L.t.sRep}  pairs) as generalization of  weighted Reynolds operators on Lie triple systems. First, We construct an $L_{\infty}$-algebra  whose Maurer-Cartan elements  are generalized Reynolds  operators. This allows us to define a Yamaguti cohomology of a generalized Reynolds operator. This cohomology can be seen as the Yamaguti cohomology of a certain Lie triple system  with coefficients in a
suitable representation. Next, we study  deformations of generalized Reynolds operators from cohomological points of view and we investigate  the
obstruction class of an extendable deformation of order $n$.  We end this paper by introducing a new algebraic structure, in connection  with  generalized Reynolds operator, called NS-Lie triple system. Moreover, we show that NS-Lie triple systems can be derived from NS-Lie algebras.
\end{abstract}

\textbf{Key words} : Lie triple system,    generalized Reynolds operator,  Maurer-Cartan element,

$L_{\infty}$-algebra, Lie-Yamaguti cohomology, deformation,  NS-Lie triple system.

\textbf{Mathematics Subject Classification} (2020) : 17B15, 17A40, 17B56,  17B10, 17B38.

\numberwithin{equation}{section}

\tableofcontents

\section{Introduction}

The concept of Lie triple system   was introduced  first  by  Jacobson \cite{Jacobson} and the present formulation is due to Yamguti \cite{Yamaguti}. Moreover, it appeared in  Cartan's work on Riemannian Geometry  \cite{Cartan} and was strongly developed for Symmetric spaces and related spaces. Indeed,  the tangent space of a symmetric space is a Lie triple system. It turns out that  they have important applications in physics, in particular, in elementary particle theory and the theory of quantum mechanics, as well as numerical analysis of differential equations. They  have become an interesting subject in mathematics, their structure have been studied first by  Lister in \cite{Lister}.

The notion of a Rota-Baxter operator on an associative algebra appeared first in  probability theory when  G. Baxter studied algebraically the Spitzer identity. The theory was developed first by Rota in the combinatorics field.  Later  relevant application was considered  in the Connes-Kreimer's algebraic approach to renormalization of quantum field theory \cite{CK}. For further details, see ~\cite{Gub-AMS,Gub}. A Rota-Baxter operator on a Lie algebra is naturally the operator form of a classical $r$-matrix~\cite{STS} under certain conditions. To better understand such connection in general, Kupershmidt introduced the notion of an $\mathcal{O}$-operator (also called
a relative Rota-Baxter operator \cite{PBG} or a generalized Rota-Baxter operator \cite{Uch})
on a Lie algebra in~\cite{Ku}. Recently a   deformation theory and a cohomology theory of relative Rota-Baxter operators on both Lie and associative algebras are studied  in \cite{Das,TBGS}. Reynolds operators were introduced by Reynolds in \cite{Re} in the study of fluctuation theory in fluid dynamics.
In \cite{KAM}, the author coined the concept of the Reynolds operator and regarded the operator as a mathematical subject in general. 
In \cite{gao-guo}, the authors provided examples and properties of the Reynolds operators and studied the free Reynolds algebras.

 NS-algebras were introduced by Leroux in \cite{Leroux}. In \cite{LG}, the authors studied   the relationship between the category of Nijenhuis algebras and the category of NS-algebras. Uchino defined the notion of generalized Reynolds operators on associative algebras and observed that a generalized Reynolds operator induces an NS-algebra in \cite{Uch}.  In \cite{Das-2}, Das defined a  cohomology of generalized Reynolds operators on associative algebras and a cohomology of NS-algebras,  various applications were provided.  He  also introduced the notions of generalized Reynolds operators, Reynolds operators on Lie algebras and NS-Lie algebras, and showed that NS-Lie algebras are the underlying algebraic structures of generalized Reynolds operators on Lie algebras  in \cite{Das-1}.

Inspired by these works, we aim in this paper to study Maurer-Cartan  type  cohomology on generalized Reynolds operators on Lie triple systems with representations (Abbr.  \textsf{L.t.sRep}  pairs).  Moreover, we introduce the notion of NS-Lie triple system as the underlying structure of generalized Reynolds operators.

 This is paper is organized as follow, in Section \ref{Section-2}, we summarize some basics and briefly recall representations and cohomology
of Lie triple systems. In Section \ref{Section-3}, we introduce  a generalization of  weighted Reynolds operators on a Lie triple system called generalized Reynolds operators on a Lie triple system and we  give some constructions. In Section \ref{Section-4},  we construct an $L_\infty$-algebra whose Maurer–Cartan elements are given by  generalized Reynolds operators. This motivates us to
define  a Yamaguti cohomology of  generalized Reynolds operator on Lie triple system using the underlying Lie triple systems  of the generalized Reynolds operator. In Section \ref{Section-5}, we  describe the obstruction class of a deformation of order $n$ to be extended to order $n+1$.  Finally,  Section  \ref{Section-6} deals   a new algebraic structure called NS-Lie triple system that is related to generalized Reynolds operators. We show that  Lie
algebras, NS-Lie algebras, Lie triple systems and
NS-Lie triple systems are closely related.

In this paper all vector spaces are considered over a field $\mathbb{K}$  of characteristic $0$.
\section{Preliminaries}\label{Section-2}
In this section, we recall some basic notions as  representations and cohomology theory  of Lie triple systems (L.t.s) introduced in \cite{Lister}. A  Lie triple system   is a vector space $L$ endowed with a ternary bracket $[\cdot,\cdot,\cdot] : \wedge^2L \otimes L \to L$ satisfying
\begin{align}
    & [x,y,z]+[y,z,x]+[z,x,y]=0, \label{lts 1}
    \\
    & [x,y,[z,t,e]]=[[x,y,z],t,e]+[z,[x,y,t],e]+[z,t,[x,y,e]] \quad\label{lts 2}\forall  x,y,z,t,e \in L.
\end{align}

A morphism $\phi : (L, [\c, \c, \c]) \rightarrow (L^{'}
, [\c, \c, \c]^{'})$ of L.t.s is a linear map satisfying  : \begin{equation*}
   \phi([x, y, z]) = [\phi(x), \phi(y), \phi(z)]^{'}\quad \forall x,y,z\in L.
\end{equation*}
An isomorphism is a bijective morphism.
We set  $X = (x, y)\in L\otimes L$ and denote by  $\mathfrak R(X),\ \mathfrak D(X) :L \rightarrow L$ maps defined by $\mathfrak R(X)z  = [ z,x, y]$ and  $\mathfrak D(X)z = [x, y, z]$. Then the identities  \eqref{lts 1} and \eqref{lts 2}  can be rewritten respectively
in the form $\mathfrak D(X)=\mathfrak R(\tau(X))-\mathfrak R(X)$, where $\tau(X)=y\otimes x$, and
\begin{equation*}
    \mathfrak D(X)[z_1, z_2, z_3] = [\mathfrak D(X)z_1, z_2, z_3] + [z_1, \mathfrak D(X)z_2, z_3] + [z_1, z_2, \mathfrak D(X)z_3].
\end{equation*}
This means that $\mathfrak D(X)$ is a derivation with respect to the bracket $[\c, \c,\c ]$.

In \cite{Yamaguti},  K. Yamaguti introduced the notion of representation and cohomology theory of L.t.s. Later, the authors in \cite{Harris,Hodge,Kubo} studied the cohomology theory of L.t.s from a different point of view. K. Yamaguti’s
work can be described as follows.

A representation of a L.t.s $L$ on a vector space $M$ is a bilinear map $\theta : \otimes^{2} L \rightarrow End(M)$, such that the following conditions are satisfied  :
\begin{align}
    & \theta(z,t)\theta(x,y)-\theta(y,t)\theta(x,z)-\theta(x,[y,z,t])+D(y,z)\theta(x,t)=0, \label{rep lts 1}
    \\
    &\theta(z,t)D(x,y)-D(x,y)\theta(z,t)+\theta([x,y,z],t)+\theta(z,[x,y,t])=0 \quad  \forall x,y,z,t\in L,  \label{rep lts 2}
\end{align}
where $D(x,y)=\theta(y,x)-\theta(x,y)$. It is  denoted  by a pair $(M,\theta)$. We say that we have a \textsf{L.t.sRep} pair
and refer to it with the tuple
$(L, [\cdot,\cdot, \cdot],\theta)$.
\begin{rmk}
\begin{enumerate}
    \item
    It can be concluded from \eqref{rep lts 2} that
\begin{align}\label{rep lts 3}
    &D(z,t)D(x,y)-D(x,y)D(z,t) + D([x,y,z],t) + D(z,[x,y,t]) =0.
\end{align}

\item If $M=L$  then $\mathfrak R$ is a representation of $L$ in it self, which is called the regular representation.\end{enumerate}\end{rmk}
 Recall that, a  representation of a Lie algebra $(L,[\c,\c] )$  on  a vector space $M$ is  a  linear map $\rho : L \to End( M) $ satisfying  :
\begin{align*}
    &\rho([x,y]) = \rho(x)\rho(y)-\rho(y)\rho(x)\quad\forall x,y \in L.
\end{align*}
Note that a \textsf{LieRep} pair $(L,[\c,\c],\rho)$ is a Lie algebra  $(L,[\c,\c] )$ with a representation $\rho$.
\begin{thm}\label{Lie.L.t.s}
Let $(L,[\c,\c],\rho)$ be a \textsf{LieRep} pair.
Then $(L, [\c,\c,\c],\theta_{\rho})$ is a \textsf{L.t.sRep} pair, where  \begin{equation}
    [x,y,z]=[[x,y],z]
\quad \text{and}\quad
    \theta_{\rho}(x,y)=\rho(y)\rho(x)\quad \forall  x,y,z\in L.
\end{equation}
\end{thm}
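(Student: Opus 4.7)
The plan is to verify the two axioms of a Lie triple system for the ternary bracket $[x,y,z]=[[x,y],z]$, and then verify the two representation axioms \eqref{rep lts 1} and \eqref{rep lts 2} for $\theta_\rho(x,y)=\rho(y)\rho(x)$. The whole proof should reduce to repeated application of the Jacobi identity in $L$ together with the Lie representation identity $\rho([u,v])=\rho(u)\rho(v)-\rho(v)\rho(u)$.

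For the Lie triple system axioms, \eqref{lts 1} is immediate from the Jacobi identity in $(L,[\cdot,\cdot])$ since
\[
[x,y,z]+[y,z,x]+[z,x,y]=[[x,y],z]+[[y,z],x]+[[z,x],y]=0.
\]
For \eqref{lts 2}, I would expand $[x,y,[z,t,e]]=[[x,y],[[z,t],e]]$ and apply the Jacobi identity twice: once to split $[[x,y],[[z,t],e]]$ into $[[[x,y],[z,t]],e]+[[z,t],[[x,y],e]]$, then again to expand $[[x,y],[z,t]]$ as $[[[x,y],z],t]+[z,[[x,y],t]]$. Collecting the three resulting terms matches the right-hand side of \eqref{lts 2} term by term.

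For the representation axioms, the key preliminary observation is that
\[
D(x,y)=\theta_\rho(y,x)-\theta_\rho(x,y)=\rho(x)\rho(y)-\rho(y)\rho(x)=\rho([x,y]),
\]
so $D$ is expressed through $\rho$ applied to a single bracket. Plugging into \eqref{rep lts 1} and factoring $\rho(x)$ on the right, the identity becomes
\[
\bigl(\rho(t)\rho(z)\rho(y)-\rho(t)\rho(y)\rho(z)-\rho([[y,z],t])+\rho([y,z])\rho(t)\bigr)\rho(x)=0,
\]
which collapses to $0$ after rewriting $\rho(z)\rho(y)-\rho(y)\rho(z)=-\rho([y,z])$ and then using $\rho([[y,z],t])=\rho([y,z])\rho(t)-\rho(t)\rho([y,z])$. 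Similarly, \eqref{rep lts 2} becomes
\[
\rho(t)\rho(z)\rho([x,y])-\rho([x,y])\rho(t)\rho(z)+\rho(t)\rho([[x,y],z])+\rho([[x,y],t])\rho(z)=0,
\]
and grouping the first and third terms (resp.\ the second and fourth) and applying the representation identity to $\rho([[x,y],z])$ and $\rho([[x,y],t])$ yields two expressions that cancel.

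I do not expect any serious obstacle: the whole verification is a careful but mechanical bookkeeping exercise. The only point deserving attention is the placement of operators, since $\theta_\rho(x,y)=\rho(y)\rho(x)$ reverses the order, so one must be consistent when factoring $\rho(x)$ on the right in \eqref{rep lts 1} and $\rho(z)$ on the right in \eqref{rep lts 2}. Once this bookkeeping is done correctly, both identities reduce to the representation identity of $\rho$.
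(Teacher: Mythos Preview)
Your proof is correct: the verification of the Lie triple system axioms via two applications of the Jacobi identity, and of \eqref{rep lts 1}--\eqref{rep lts 2} via the key observation $D(x,y)=\rho([x,y])$ together with the Lie representation identity, all go through exactly as you describe. The paper states this theorem without proof (it is recalled as a known preliminary result), so there is no approach to compare against; your direct bookkeeping verification is the natural one and is fully adequate. One cosmetic remark: in your final sentence you mention ``factoring $\rho(z)$ on the right in \eqref{rep lts 2}'', but in fact your actual computation groups the first term with the third and the second with the fourth rather than factoring a common $\rho(z)$; this is harmless since the grouping you describe works perfectly.
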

Let
$(L, [\cdot,\cdot, \cdot],\theta)$ be a \textsf{L.t.sRep} pair. For each $n \geqslant 0$, we denote by $\mathcal{C}_{L.t.s}^{2n+1}(L,M)$, the vector space of $(2n+1)$-cochains of $L$ with coefficients in $M$ : $\psi\in \mathcal{C}_{L.t.s}^{2n+1}(L,M)$ is a multilinear function of $\psi :\times^{2n+1}L \to M$ satisfying
\begin{align*}
&\quad\quad\quad\psi(x_1,x_2,\cdots,x_{2n-2},x,x,y)=0,\\
&\circlearrowleft_{x, y, z}\psi(x_1, x_2, \cdots, x_{2n-2}, x, y, z)  = 0,
\end{align*}
where $\circlearrowleft_{x, y, z}$ means that we are taking a cyclic summation.

A Yamaguti coboundary operator $\delta^{2n-1} : \mathcal{C}_{L.t.s}^{2n-1}(L,M) \rightarrow \mathcal{C}_{L.t.s}^{2n+1}(L,M) $ is defined by:
\begin{align}
  &  \delta^{2n-1} \psi(x_1,x_2, \cdots , x_{2n+1})\nonumber\\
    =&\theta(x_{2n},x_{2n+1})\psi(x_1,x_2, \cdots , x_{2n-1})- \theta(x_{2n-1},x_{2n+1})\psi(x_1,x_2, \cdots , x_{2n-2},x_{2n}) \nonumber \\
    & +\sum_{k=1}^n (-1)^{n+k}D(x_{2k-1},x_{2k})\psi(x_1,x_2, \cdots , \widehat{x}_{2k-1},\widehat{x}_{2k}, \cdots , x_{2n+1}) \nonumber \\
   &  +\sum_{k=1}^n \sum_{j=2k+1}^{2n+1}  (-1)^{n+k+1} \psi(x_1,x_2, \cdots,  \widehat{x}_{2k-1},\widehat{x}_{2k}, \cdots, [x_{2k-1},x_{2k},x_j],\cdots , x_{2n+1}),
\end{align}
for all $\psi \in  \mathcal{C}_{L.t.s}^{2n-1}(L,M), n \geqslant 1$, where $\;\widehat{}\;$ means that the element is omitted.
\begin{pro}
    The Yamaguti coboundary operator $\delta^{2n-1}$ defined above is a square zero map, i.e., $\delta^{2n+1}\circ \delta^{2n-1}=0$.
Therefore,     the  set of Yamaguti cochains forms a complex with this coboundary as follow:
$$ \mathcal{C}_{L.t.s}^{1}(L,M)\overset{\delta^{1}}{\longrightarrow} \mathcal{C}_{L.t.s}^{3}(L,M)\overset{\delta^{3}}{\longrightarrow}  \mathcal{C}_{L.t.s}^{5}(L,M) \longrightarrow \cdots.$$
\end{pro}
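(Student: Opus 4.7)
The plan is to establish $\delta^{2n+1} \circ \delta^{2n-1} = 0$ by direct computation on a general $(2n-1)$-cochain $\psi$. I would substitute the defining formula of $\delta^{2n-1}\psi$ into each of the four summands appearing in $\delta^{2n+1}$, evaluated on a tuple $(x_1,\ldots,x_{2n+3})$. This produces sixteen structural families of terms, which I would sort by the operators appearing outside $\psi$: compositions $\theta\theta\psi$, $\theta D\psi$, $D\theta\psi$, $DD\psi$; single applications $\theta(\cdot,\cdot)\psi$ or $D(\cdot,\cdot)\psi$ whose arguments contain an internal bracket $[x,y,z]$; and the ``inner'' terms of the form $\psi(\ldots,[a,b,[c,d,e]],\ldots)$ produced when the outer coboundary inserts a bracket into an argument of $\psi$ already carrying one.

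I would then verify cancellation family by family. The $\theta\theta\psi$ and $\theta D\psi$ (resp.\ $D\theta\psi$) contributions collapse by the representation axioms \eqref{rep lts 1} and \eqref{rep lts 2}; the $DD\psi$ contributions collapse by the derived identity \eqref{rep lts 3}. The terms of the form $\theta([x,y,z],t)\psi$, $\theta(t,[x,y,z])\psi$, $D([x,y,z],t)\psi$ produced by the outer $\delta^{2n+1}$ pair, after reindexing, with those produced by the inner $\delta^{2n-1}$, again by \eqref{rep lts 1}--\eqref{rep lts 3}. Finally, the doubly-nested bracket contributions cancel by repeated application of the fundamental identity \eqref{lts 2} together with the skew-cyclic identity \eqref{lts 1}. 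One also has to verify that the output truly belongs to $\mathcal{C}_{L.t.s}^{2n+3}(L,M)$, i.e.\ satisfies the cyclic and skew symmetry conditions, but this follows from the corresponding symmetries of $\theta$, $D$ and $\psi$ themselves.

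The main obstacle is purely combinatorial bookkeeping: carrying the signs $(-1)^{n+k}$ and $(-1)^{n+k+1}$ together with the omission patterns $\widehat{x}_{2k-1},\widehat{x}_{2k}$ from the inner and outer coboundaries simultaneously, and reindexing pairs $(k,j)$ so that terms with matching operator patterns can be compared. A conceptually cleaner alternative, which I would use only as a sanity check, is to identify $(\mathcal{C}_{L.t.s}^{\ast}(L,M),\delta)$ with an appropriate subcomplex (after a degree shift) of the Chevalley--Eilenberg complex of the standard embedding Lie algebra of $(L,[\cdot,\cdot,\cdot])$ with coefficients in a representation induced by $(M,\theta)$; then $\delta^2=0$ follows from the classical Lie-algebra fact. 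For a self-contained presentation, however, the direct verification in the style of Yamaguti \cite{Yamaguti} is the route I would take, and the grouping of terms described above keeps the computation tractable.
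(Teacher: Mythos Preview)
The paper does not actually prove this proposition: it is stated in the preliminaries as a classical fact and immediately followed by references to \cite{Yamaguti,Zhang} for details. Your proposal --- a direct term-by-term verification using the representation identities \eqref{rep lts 1}--\eqref{rep lts 3} and the fundamental identity \eqref{lts 2}, in the style of Yamaguti's original argument --- is exactly the standard proof underlying those references, so there is nothing to compare.
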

Hence we get the
Yamaguti cohomology group  $\mathcal{H}_{L.t.s}^{\bullet}(L , M ) = \mathcal{Z}_{L.t.s} ^{\bullet}(L , M )/\mathcal{B}_{L.t.s}^{\bullet}(L , M )$, where $\mathcal{Z}_{L.t.s}^{\bullet}(L ,M )$ is the space of
cocycles and $\mathcal{B}_{L.t.s}^{\bullet}(L , M )$ is the space of coboundaries. See \cite{Yamaguti,Zhang} for more details.
A  $1$-cochain $\varphi \in\mathcal{C}_{L.t.s}^{1}(L, M )$
is a $1$-cocycle if
\begin{equation}\label{1cocycle}
    D(x_1,x_2)\varphi (x_3)-\theta(x_1,x_3)\varphi (x_2)+\theta(x_2,x_3)\varphi(x_1)-\varphi ([x_1,x_2,x_3])=0.
\end{equation}
A  $3$-cochain $\mathcal{H}\in \mathcal{C}_{L.t.s}^{3}(L,M)$ is a $3$-coboundary if there exists a map $\varphi \in \mathcal{C}_{L.t.s}^{1}(L, M )$ such that $\mathcal{H}=\delta^{1}\varphi$.
 A $3$-coboundary  is called a $3$-cocycle if for all $ x_1,x_2,x_3,y_1,y_2,y_3\in L$ :
{\small\begin{align}
  \label{SkewCochain}  &\mathcal{H}(x_1,x_1,x_2)=0,\\\label{2.11}
    &\mathcal{H}(x_1,x_2,x_3)+\mathcal{H}(x_2,x_3,x_1)+\mathcal{H}(x_3,x_1,x_2)=0,\\
    &\mathcal{H}(x_1,x_2,[y_1,y_2,y_3])+D(x_1,x_2)\mathcal{H}(y_1,y_2,y_3)-\mathcal{H}([x_1,x_2,y_1],y_2,y_3)-\mathcal{H}(y_1,[x_1,x_2,y_2],y_3)\nonumber\\
   = &\mathcal{H}(y_1,y_2,[x_1,x_2,y_3])
    +\theta(y_2,y_3)\mathcal{H}(x_1,x_2,y_1)-\theta(y_1,y_3)\mathcal{H}(x_1,x_2,y_2)+D(y_1,y_2)\mathcal{H}(x_1,x_2,y_3).\label{2.12}
\end{align}}

 \begin{thm} \cite{O-operator}
Let $\phi\in \mathcal{Z}^{2}_{Lie}(L,M)$. Then $\omega(x,y,z)=\phi([x,y],z)-\rho(z)\phi(x,y)$ is a $3$-cocycle of the \textsf{L.t.sRep} pair $(L, [\c,\c,\c],\theta_{\rho})$ given in Theorem \ref{Lie.L.t.s}.
\end{thm}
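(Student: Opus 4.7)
The plan is to verify the three defining conditions of a 3-cocycle of the \textsf{L.t.sRep} pair $(L, [\c,\c,\c], \theta_{\rho})$ applied to $\omega$: the degeneracy condition \eqref{SkewCochain}, the cyclic identity \eqref{2.11}, and the compatibility \eqref{2.12}. Throughout I would use the simplifications available in this setting, namely $\theta_{\rho}(x,y) = \rho(y)\rho(x)$, $[x,y,z] = [[x,y],z]$, and, by the Lie representation axiom, $D(x,y) = \theta_{\rho}(y,x) - \theta_{\rho}(x,y) = \rho(x)\rho(y) - \rho(y)\rho(x) = \rho([x,y])$.

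Condition \eqref{SkewCochain} is immediate: $\omega(x_1,x_1,x_2) = \phi([x_1,x_1],x_2) - \rho(x_2)\phi(x_1,x_1) = 0$, using that $\phi$ is skew-symmetric and $[x_1,x_1]=0$. For the cyclic identity \eqref{2.11}, I would write the cyclic sum $\omega(x_1,x_2,x_3)+\omega(x_2,x_3,x_1)+\omega(x_3,x_1,x_2)$, separate it into a $\phi([\c,\c],\c)$-part and a $\rho(\c)\phi(\c,\c)$-part, and recognise the result as exactly the standard Lie $2$-cocycle identity $\delta_{Lie}\phi(x_1,x_2,x_3)=0$ after accounting for signs via the skew-symmetry of $\phi$ and of $[\c,\c]$.

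The substantial step is \eqref{2.12}. My strategy is to expand every occurrence of $\omega$ in the identity according to its definition and sort the resulting terms into two groups: the \textit{bracket terms} of the form $\phi([\c,\c],\c)$ and the \textit{action terms} of the form $\rho(\c)\phi(\c,\c)$. The bracket terms are simplified using the Jacobi identity on nested brackets such as $[[x_1,x_2],[y_1,y_2]]$ and $[[x_1,x_2],[y_i,y_j]]$, together with the Lie $2$-cocycle identity applied to triples like $(x_1,x_2,[y_1,y_2])$ to split combined brackets. The action terms, generated by $D(x_1,x_2)\omega(y_1,y_2,y_3)$ and $\theta_{\rho}(y_i,y_j)\omega(x_1,x_2,y_k)$, are rewritten via $D(x,y)=\rho([x,y])$ and the operator commutator formula $\rho([u,v])\rho(w)-\rho(w)\rho([u,v])=\rho([[u,v],w])$. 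After these two reductions, the two sides of \eqref{2.12} are seen to differ only by terms that cancel in pairs, completing the verification.

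The main obstacle is the bookkeeping in \eqref{2.12}, where roughly two dozen terms are produced on each side and must be paired correctly; no new conceptual ingredient beyond the Jacobi identity for $[\c,\c]$, the representation property of $\rho$, and the hypothesis $\delta_{Lie}\phi = 0$ is required.
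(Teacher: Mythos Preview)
The paper does not supply its own proof of this theorem; it is quoted from \cite{O-operator} without argument. Your direct verification is the natural approach and is correct: the dictionary $\theta_\rho(x,y)=\rho(y)\rho(x)$, $[x,y,z]=[[x,y],z]$, and $D(x,y)=\rho([x,y])$ reduces the three cocycle conditions \eqref{SkewCochain}--\eqref{2.12} to repeated uses of the Jacobi identity for $[\cdot,\cdot]$, the commutator identity $[\rho(u),\rho(v)]=\rho([u,v])$, and the hypothesis $\delta_{Lie}\phi=0$, exactly as you describe. Nothing conceptual is missing; only the explicit term-by-term bookkeeping in \eqref{2.12} remains to be written out.
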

\section{Generalized Reynolds operators  on Lie triple systems with representations}\label{Section-3}
In this section,  we introduce the notion of generalized Reynolds operator on \textsf{L.t.sRep} pairs which provides a generalization of  weighted Reynolds operators on L.t.s.   Also, We give its characterization   by a graph and provide some constructions. This notion is an algebraic analogue of     generalized Reynolds operator on a Lie algebra introduced in \cite{Das-Twisted} and on a $3$-Lie algebras introduced in \cite{chtioui,Sheng}.

\subsection{Weighted Reynolds operators  on L.t.s}
 \begin{defi}
  Let $(L,[\c,\c] )$ be a Lie algebra and  $\lambda$ be a non-null scalar . A \textbf{ $\lambda$-weighted Reynolds operator} on  a  Lie algebra  is a linear map  $\mathcal{R}  : L\to L $ satisfying:
\begin{align}
    &[\mathcal{R}x,\mathcal{R}y]=\mathcal{R}\Big (  [\mathcal{R}x,y]+  [x,\mathcal{R}y] +\lambda [\mathcal{R}x,\mathcal{R}y]\Big)  \quad \forall x,y \in L.
\end{align}
Note that $(L,[\c,\c],\mathcal{R} )$ is  $\lambda$-weighted Reynolds Lie algebra.
 \end{defi}
 \begin{rmk}
If $\lambda=-1$, we recover the definition of Reynolds operator on a Lie algebra introduced by A. Das  in \cite{Das-Twisted} in the study of generalized Reynolds operators on a Lie algebra.
 \end{rmk}
 \begin{defi}
  Let $(L,[\c,\c,\c] )$ be a  L.t.s and  $\lambda$ be a scalar. A \textbf{$\lambda $-weighted Reynolds operator} on a L.t.s  is a linear map  $\mathcal{R}  : L\to L $ satisfying:
  \begin{align}\label{Reynolds}
    &[\mathcal{R}x,\mathcal{R}y,\mathcal{R}z]=\mathcal{R}\Big (  [\mathcal{R}x,\mathcal{R}y,z]+  [x,\mathcal{R}y,,\mathcal{R}z] +  [\mathcal{R}x,y,\mathcal{R}z]+\lambda [\mathcal{R}x,\mathcal{R}y,\mathcal{R}z]\Big) \quad\forall x,y,z \in L.
\end{align}
Note that $(L,[\c,\c,\c],\mathcal{R} )$ is $\lambda $-weighted Reynolds  \textsf{L.t.s}.
  \end{defi}
\begin{ex}\label{Block}
For a fixed complex number $q$,
there is a  Lie triple system structure analogue to the Lie algebra of Block type given in \cite{Block}, generated by   the basis $\{L_{m,i} | m, i \in \mathbb{Z}\}$ and given by  the following bracket
$$[L_{m,i}, L_{n,j}, L_{p,k} ] = (n(i + q) - m(j + q)) (p(i + j+q) - (m+n)(p + q))L_{m+n+p,i+j+k}. $$
Let $B(q)_{\geq0}= \{L_{m,i} | m, i \in \mathbb{Z}_+\}$, then the linear map  $ \mathcal{R} :B(q)_{\geq0} \to B(q)_{\geq0} $ defined by \begin{align}
    \mathcal{R} (L_{m,i})= \frac{- 1}{\lambda(m+i+1)}\  L_{m,i} \ \ \ \ \forall m \in B(q)_{\geq0}, \lambda\neq 0
\end{align}
is a $\lambda $-weighted Reynolds operator on  $B(q)_{\geq0}$.

\end{ex}
  \begin{defi}
  Let $(L,[\c,\c,\c],\mathcal{R} )$  and $(L',[\c,\c,\c]',\mathcal{R'} )$ be two $\lambda $-weighted Reynolds  L.t.s. A linear map $\alpha  : L \to L'$ is called \textbf{morphism} of $\lambda $-weighted Reynolds  L.t.s  if $\alpha$ is morphism of  L.t.s satisfying $\alpha \circ \mathcal{R}=\mathcal{R'} \circ \alpha.$
  \end{defi}
  \begin{thm}
    Let $(L,[\c,\c,\c],\mathcal{R} )$  be $\lambda $-weighted Reynolds  L.t.s. Define $[\cdot,\cdot,\cdot]_ \mathcal{R}$ on $L$ by
    \begin{align}\label{RynoldsBracket}
        [x,y,z]_ \mathcal{R}=  [\mathcal{R}x,\mathcal{R}y,z] + [\mathcal{R}x,y,\mathcal{R}z] + [x,\mathcal{R}y,\mathcal{R}z] +\lambda  [\mathcal{R}x,\mathcal{R}y,\mathcal{R}z]  \quad \forall x,y,z\in L.
    \end{align}
    Then  $(L,[\c,\c,\c]_\mathcal{R},\mathcal{R} )$ is a $\lambda $-weighted Reynolds L.t.s. Moreover $\mathcal{R}$ is a morphism of $\lambda $-weighted Reynolds operators L.t.s from $(L,[\c,\c,\c]_\mathcal{R},\mathcal{R} )$ to $(L,[\c,\c,\c],\mathcal{R} )$, that is $\mathcal{R}([x,y,z]_\mathcal{R})=[\mathcal{R}(x),\mathcal{R}(y),\mathcal{R}(z)].$
  \end{thm}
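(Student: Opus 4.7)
The proof splits into three claims that I will dispatch in the order in which they feed into one another: first, the morphism identity $\mathcal{R}([x,y,z]_\mathcal{R})=[\mathcal{R}x,\mathcal{R}y,\mathcal{R}z]$; second, the two L.t.s axioms \eqref{lts 1} and \eqref{lts 2} for $[\c,\c,\c]_\mathcal{R}$; and third, the $\lambda$-weighted Reynolds identity for $\mathcal{R}$ with respect to $[\c,\c,\c]_\mathcal{R}$. The first claim is essentially tautological: applying $\mathcal{R}$ to the right-hand side of the definition \eqref{RynoldsBracket} of $[x,y,z]_\mathcal{R}$ and invoking the defining identity \eqref{Reynolds} of a $\lambda$-weighted Reynolds operator yields $\mathcal{R}([x,y,z]_\mathcal{R})=[\mathcal{R}x,\mathcal{R}y,\mathcal{R}z]$ at once. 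I record this first because it is the key tool that reduces the remaining two claims to direct computation.

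For axiom \eqref{lts 1} on the new bracket, I expand the cyclic sum $[x,y,z]_\mathcal{R}+[y,z,x]_\mathcal{R}+[z,x,y]_\mathcal{R}$ into twelve original triple brackets via \eqref{RynoldsBracket}. These regroup into four cyclic triples, one for each of the argument patterns $(\mathcal{R}x,\mathcal{R}y,z)$, $(\mathcal{R}x,y,\mathcal{R}z)$, $(x,\mathcal{R}y,\mathcal{R}z)$, and (with coefficient $\lambda$) $(\mathcal{R}x,\mathcal{R}y,\mathcal{R}z)$; each such triple is a cyclic sum that vanishes by \eqref{lts 1} applied in $L$. The main obstacle is the fundamental identity \eqref{lts 2}, namely
\[[x,y,[z,t,e]_\mathcal{R}]_\mathcal{R}=[[x,y,z]_\mathcal{R},t,e]_\mathcal{R}+[z,[x,y,t]_\mathcal{R},e]_\mathcal{R}+[z,t,[x,y,e]_\mathcal{R}]_\mathcal{R}.\]
The strategy is to use the morphism identity from the first claim to replace every occurrence of $\mathcal{R}$ applied to an inner $[\c,\c,\c]_\mathcal{R}$ by the triple bracket of the corresponding $\mathcal{R}$-images, so that both sides become finite sums of nested original triple brackets whose arguments lie in $\{x,y,z,t,e\}$ dressed by $\mathcal{R}$ in various positions. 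Sorting the resulting terms by the $\mathcal{R}$-decoration pattern of the outer and inner triples and applying \eqref{lts 2} in $L$ pattern-by-pattern, one checks cancellation block by block. The computation is lengthy but entirely mechanical; no identity beyond \eqref{lts 1}, \eqref{lts 2} and the morphism property is needed.

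Finally, the $\lambda$-weighted Reynolds identity for $[\c,\c,\c]_\mathcal{R}$ follows from an immediate expansion. The left-hand side $[\mathcal{R}x,\mathcal{R}y,\mathcal{R}z]_\mathcal{R}$ unfolds via \eqref{RynoldsBracket} into four triple brackets in $L$ whose arguments carry one or two applications of $\mathcal{R}$, while on the right-hand side the morphism property lets $\mathcal{R}$ absorb each of the four inner $[\c,\c,\c]_\mathcal{R}$ expressions into triple brackets of $\mathcal{R}$-decorated arguments. Comparing term-by-term shows the two sides agree, which completes the proof.
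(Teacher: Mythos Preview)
The paper states this theorem without proof, so there is nothing to compare your argument against directly. Your proposal is correct and is the natural direct verification: the morphism identity is indeed immediate from \eqref{Reynolds}, the cyclic identity \eqref{lts 1} for $[\c,\c,\c]_\mathcal{R}$ follows by regrouping the twelve summands into four genuine cyclic sums in $L$, and the $\lambda$-weighted Reynolds identity for $(L,[\c,\c,\c]_\mathcal{R},\mathcal{R})$ drops out of the morphism property exactly as you describe. One small omission: you do not explicitly check the skew-symmetry $[x,y,z]_\mathcal{R}=-[y,x,z]_\mathcal{R}$, which is part of the L.t.s datum $\wedge^2L\otimes L\to L$; it is of course immediate from skew-symmetry of $[\c,\c,\c]$, but it should be mentioned.

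Regarding the fundamental identity \eqref{lts 2}: your strategy is sound, and it may help the reader to say explicitly why the bookkeeping balances. After applying the morphism identity to every $\mathcal{R}$ acting on an inner $[\c,\c,\c]_\mathcal{R}$, the left-hand side $[x,y,[z,t,e]_\mathcal{R}]_\mathcal{R}$ becomes a sum of seven nested brackets of the form $[A,B,[C,D,E]]$, while the right-hand side becomes a sum of twenty-one nested brackets of the form $[[A,B,C],D,E]$, $[C,[A,B,D],E]$, $[C,D,[A,B,E]]$; one then applies \eqref{lts 2} in $L$ to each of the seven left-hand terms and matches the resulting twenty-one terms bijectively with the right-hand side. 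It is worth remarking that once the machinery of Section~\ref{Section-3} is available, this theorem is a special case of Corollary~\ref{coro} applied to the regular representation with $\mathcal{H}=\lambda[\c,\c,\c]$ (cf.\ Example~\ref{Rey-Twisted}); that route bypasses the long expansion of \eqref{lts 2} entirely, at the cost of forward-referencing results proved later in the paper.
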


\begin{pro}\label{PropReynolds}
Let $\mathcal{R}  : L\to L $ be a $\lambda$-weighted Reynolds operator on a Lie algebra, then $\mathcal{R}$ is a $2\lambda$-weighted Reynolds operator on the induced L.t.s.
\end{pro}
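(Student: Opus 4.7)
The plan is to unfold the induced ternary bracket $[x,y,z]=[[x,y],z]$ and apply the Lie algebra Reynolds identity twice, once on the inner bracket $[\mathcal{R}x,\mathcal{R}y]$ and once on the outer bracket. Keeping careful track of the $\lambda$-contributions should yield the coefficient $2\lambda$ in front of $[\mathcal{R}x,\mathcal{R}y,\mathcal{R}z]$.

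Concretely, I would first set $A=[\mathcal{R}x,y]+[x,\mathcal{R}y]+\lambda[\mathcal{R}x,\mathcal{R}y]$, so that the Reynolds identity gives $[\mathcal{R}x,\mathcal{R}y]=\mathcal{R}(A)$. Then I rewrite
\[
[\mathcal{R}x,\mathcal{R}y,\mathcal{R}z]=[[\mathcal{R}x,\mathcal{R}y],\mathcal{R}z]=[\mathcal{R}(A),\mathcal{R}z]
\]
and apply the Reynolds identity a second time to obtain
\[
[\mathcal{R}(A),\mathcal{R}z]=\mathcal{R}\bigl([\mathcal{R}(A),z]+[A,\mathcal{R}z]+\lambda[\mathcal{R}(A),\mathcal{R}z]\bigr).
\]
Next I would substitute $\mathcal{R}(A)=[\mathcal{R}x,\mathcal{R}y]$ back into the first and third terms, and expand $[A,\mathcal{R}z]$ linearly as $[[\mathcal{R}x,y],\mathcal{R}z]+[[x,\mathcal{R}y],\mathcal{R}z]+\lambda[[\mathcal{R}x,\mathcal{R}y],\mathcal{R}z]$. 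Reinterpreting each double bracket via $[a,b,c]=[[a,b],c]$ then gives the expression inside $\mathcal{R}$ as $[\mathcal{R}x,\mathcal{R}y,z]+[\mathcal{R}x,y,\mathcal{R}z]+[x,\mathcal{R}y,\mathcal{R}z]+\lambda[\mathcal{R}x,\mathcal{R}y,\mathcal{R}z]+\lambda[\mathcal{R}x,\mathcal{R}y,\mathcal{R}z]$, and the two $\lambda$-terms combine to $2\lambda[\mathcal{R}x,\mathcal{R}y,\mathcal{R}z]$, which is exactly the identity \eqref{Reynolds} with weight $2\lambda$.

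There is no real obstacle here: the argument is a bookkeeping exercise with two uses of the weighted Reynolds identity. The only point requiring care is to notice that the coefficient $\lambda$ on $[\mathcal{R}x,\mathcal{R}y,\mathcal{R}z]$ is produced twice, once from the outer application of the identity and once from the $\lambda[\mathcal{R}x,\mathcal{R}y]$ summand of $A$ when fed into the middle term $[A,\mathcal{R}z]$, thereby doubling the weight.
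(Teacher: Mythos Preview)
Your argument is correct and is exactly the natural two-step application of the Lie algebra Reynolds identity that one would expect. Note that the paper states Proposition~\ref{PropReynolds} without proof, so there is no alternative argument to compare against; your computation fills this gap cleanly.
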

 \begin{ex}\label{Expl}
 Let $W$ be the Witt algebra generated by basis elements $\{l_n\}_{n\in \mathbb{Z}}$ and the Lie bracket given by:
\begin{align}
    &[l_m,l_n]= (m-n)l_{m+n} \ \ \ \ \ \forall m,n \in \mathbb{Z}.
\end{align}
By Theorem \ref{Lie.L.t.s}, we obtain a  L.t.s defined by the bracket \begin{align*}
   & [l_m,l_n,l_p]= (m-n)(m+n-p))l_{m+n+p} \ \ \ \ \ \forall m,n,p \in \mathbb{Z}.
\end{align*}
Note that $W_{\geq0} = span\{l_n  | n \geq 0\}$  is a L.t.s subalgebra of $W$.   The linear map $ \mathcal{R} :W_{\geq0} \to W_{\geq0} $ defined by \begin{align}
    \mathcal{R}(l_m)= \frac{- 1}{\lambda(m+1)}\  l_m \ \ \ \ \forall m \in W_{\geq0} \ and \ \lambda\neq 0
\end{align}
is a
$\lambda $-weighted Reynolds operator on the Lie algebra $W_{\geq0}$. Then, according to Proposition \ref{PropReynolds} $\mathcal{R}$ is a  Reynolds operator of weight $2\lambda$ on the induced L.t.s.
This example will be more clear in Section \ref{Section-6} when we will introduce NS-Lie triple systems and a functor
from the category of NS-Lie triple systems to the category of generalized Reynolds operator on L.t.s.\end{ex}


\subsection{Generalized Reynolds operators  on \textsf{L.t.sRep}  pairs}

Let $(L, [\cdot,\cdot, \cdot],\theta)$  be a \textsf{L.t.sRep} pair. For any $3$-cocycle $\mathcal{H}\in \mathcal{Z}_{L.t.s} ^{3}(L , M )$,  there is a Lie triple system structure on the direct sum  of vector spaces $L \oplus M $, defined by:~\small{
\begin{align}\label{twistedsemidirect product}
[x+u,y+v,z+w]_\mathcal{H}=\Big ([x,y,z], \ \theta(y,z)u-\theta(x,z)v+D(x,y)w+ \mathcal{H}(x,y,z)\Big)\quad \forall ~~x,y,z \in L.~~ u,v,w \in M.
\end{align}}
 This  Lie triple system is called the \textbf{twisted semi-direct product  Lie triple system} and denoted by \textbf{$L\ltimes_\theta^{\mathcal{H}} M$}.

In the sequel, $\mathcal{H}$ will always denote a $3$-cocycle.

\begin{defi}
 A linear map  $T  : M \to L $ is said to be a \textbf{ generalized Reynolds operator } on a \textsf{L.t.sRep} pair $(L, [\c,\c,\c],\theta)$  if it satisfies:
\begin{align}\label{O op on lts}
[T u,T v,T w]=T \Big(D(T u,T v)w+\theta(T v,T w)u-\theta(T u,T w)v+ \mathcal{H}(T u,T v,T w)\Big)  \quad \forall u,v,w \in M.
\end{align}
\end{defi}
\begin{ex}Recall that a
\textbf{Rota-Baxter} operator of weight zero  on a
   L.t.s $L$ is a linear map $\mathcal{R} : L \rightarrow L$    satisfying :
\begin{equation*}
    [\mathcal{R}(x),\mathcal{R}(y),\mathcal{R}(z)]=\mathcal{R}\Big([\mathcal{R}(x),\mathcal{R}(y),z]+[\mathcal{R}(x),y,\mathcal{R}(z)]+[x,\mathcal{R}(y),\mathcal{R}(z)]\Big)  \quad \forall x,y,z \in L.
\end{equation*}
Any Rota–Baxter operator of weight zero on a  L.t.s is a
generalized Reynolds operator with respect to the regular representation and  $\mathcal{H}= 0$.
\end{ex}
\begin{rmk} The notion of generalized Reynolds operator   is also called twisted $\mathcal{O}$-operator  or twisted Kupershmidt operator. In \cite{O-operator}, the authors
introduced the notion of an $\mathcal{O}$-operator on a \textsf{L.t.sRep} pair.
It is straightforward to see that generalized Reynolds operators on \textsf{L.t.sRep} pairs are generalization of Rota-Baxter operators of weight 0 and
$\mathcal{O}$-operators on \textsf{L.t.sRep} pairs.
\end{rmk}

\begin{ex}\label{Rey-Twisted}
Any  $\lambda$-weighted Reynolds operator   on a L.t.s. $(L,[\c,\c,\c] )$  is  generalized Reynolds operator  with respect to the regular representation, where $\mathcal{H}$ is given by:
  \begin{align*}
      &\mathcal{H}(x,y,z)  =\lambda [x,y,z] \ \ \ \forall x,y,z \in L.
  \end{align*}
 \end{ex}
\begin{ex}
Let
$(L, [\cdot,\cdot, \cdot],\theta)$ be a \textsf{L.t.sRep} pair. Suppose that  $\varphi \in\mathcal{C}_{L.t.s}^{1}(L, M )$
is an invertible $1$-cochain. Then $T = \varphi^{-1}  : M \to L$ is a generalized Reynolds operator  with $\mathcal{H}= - \delta^{1}\varphi$. Indeed,  we have
\begin{align*}
    &  \delta^{1}\varphi  (Tu,Tv,Tw)= D(Tu,Tv)w-\theta(Tu,Tw)v+\theta(Tv,Tw)u-\varphi ([Tu,Tv,Tw]) \quad  \forall u,v,w \in M.
\end{align*}
By applying $T$ to both sides we get the result.
\end{ex}

\begin{defi}\label{Def}
Let $T$ and $T'$ be two generalized Reynolds operators on a \textsf{L.t.sRep} pair. A morphism of generalized Reynolds operators from $T$
to $T'$ consists of a L.t.s morphism  $\phi : L \to L$ and an endomorphism   $\psi : M \to M$ such that  
\begin{align}
    &\label{c1}\phi \circ T = T^{'}\circ \psi,\\
    &\label{c2}\psi(\theta(x,y)u)=\theta(\phi(x),\phi(y))\psi(u),
    \\&\label{c3}\psi( \mathcal{H}(x,y,z) )=  \mathcal{H}(\phi (x),\phi(y),\phi(z))\quad \forall \ x,y,z \in L, \ u \in M.
\end{align}
In particular, if both $\phi$ and $\psi$ are invertible then $(\phi,\psi)$ is called an \textbf{isomorphism}.

\end{defi}

Let $\varphi \in\mathcal{C}_{L.t.s}^{1}(L, M )$
be a $1$-cochain. Define $\kappa_\varphi  : L\oplus M \to  L\oplus M $ by  $
\kappa_\varphi=\begin{pmatrix}
 Id_L & 0 \\
 \varphi & Id_M
 \end{pmatrix}.
$
\begin{pro}\label{prop}
Let
$(L, [\cdot,\cdot, \cdot],\theta)$ be a \textsf{L.t.sRep} pair and $\varphi \in\mathcal{C}_{L.t.s}^{1}(L, M )$ be a $1$-cochain. Then 	 $\kappa_\varphi$ defined above is an isomorphism from the $\mathcal{H}$-twisted semi-direct product L.t.s $L\ltimes_\theta^{\mathcal{H}} M$ to the ${\mathcal{H}'}$-semi-direct product L.t.s $L\ltimes_\theta^{\mathcal{H}'} M$, where  ${\mathcal{H}'}=\mathcal{H}- \delta^{1} \varphi. $
\end{pro}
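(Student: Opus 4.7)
The plan is to verify both that $\kappa_\varphi$ is bijective and that it intertwines the two twisted brackets, after which it is an isomorphism of Lie triple systems by definition. Invertibility is immediate: since $\kappa_\varphi$ has a triangular block form with identity blocks on the diagonal, its inverse is given by $\kappa_{-\varphi}$, so I will dispose of this in one line.

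The main content is checking the bracket compatibility
\[
\kappa_\varphi\bigl([x+u,y+v,z+w]_{\mathcal{H}}\bigr)
=[\kappa_\varphi(x+u),\kappa_\varphi(y+v),\kappa_\varphi(z+w)]_{\mathcal{H}'}.
\]
Using the explicit form $\kappa_\varphi(x+u)=x+(\varphi(x)+u)$ and the definition \eqref{twistedsemidirect product} of the twisted semi-direct product bracket, I would expand both sides. The $L$-components agree trivially, both equal to $[x,y,z]$. For the $M$-components, the left-hand side yields
\[
\theta(y,z)u-\theta(x,z)v+D(x,y)w+\mathcal{H}(x,y,z)+\varphi([x,y,z]),
\]
while the right-hand side yields
\[
\theta(y,z)(\varphi(x)+u)-\theta(x,z)(\varphi(y)+v)+D(x,y)(\varphi(z)+w)+\mathcal{H}'(x,y,z).
\]
Matching the two reduces, after cancelling the common terms in $u,v,w$, to the identity
\[
\varphi([x,y,z])=\theta(y,z)\varphi(x)-\theta(x,z)\varphi(y)+D(x,y)\varphi(z)+\mathcal{H}'(x,y,z)-\mathcal{H}(x,y,z).
\]
Since $\mathcal{H}'-\mathcal{H}=-\delta^{1}\varphi$, this is precisely the formula \eqref{1cocycle} for $\delta^{1}\varphi(x,y,z)$ solved for $\varphi([x,y,z])$; so the equality holds as a tautology coming from the definition of $\delta^{1}$, not requiring that $\varphi$ be a cocycle.

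There is no real obstacle here: the only potentially confusing point is sign bookkeeping and making sure that the coboundary formula $\delta^{1}\varphi$ is applied with the conventions in \eqref{1cocycle}. I would therefore present the proof as one short paragraph establishing invertibility, followed by the expansion above with a single line identifying the discrepancy between the two brackets with $\delta^{1}\varphi$. Once bracket preservation is established, $\kappa_\varphi$ is a morphism of Lie triple systems, and bijectivity of both $\mathrm{Id}_L$ and $\mathrm{Id}_M$ on the diagonal makes it an isomorphism, completing the proposition.
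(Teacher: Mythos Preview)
Your proof is correct and is exactly the natural direct verification one would expect; the paper in fact states this proposition without proof, so there is nothing to compare against beyond noting that your argument fills the gap cleanly. The only point you might add for completeness is that $\mathcal{H}'=\mathcal{H}-\delta^{1}\varphi$ is again a $3$-cocycle (since $\delta^{3}\circ\delta^{1}=0$), so that $L\ltimes_\theta^{\mathcal{H}'}M$ is indeed a Lie triple system, but this is implicit once bracket preservation is shown.
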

The identity \eqref{O op on lts} can be characterized by the graph of $T$ being a subalgebra.
\begin{pro}\label{graph}
A linear map $T  :M\rightarrow L$ is a generalized Reynolds operator on a \textsf{L.t.sRep} pair if and only if  its graph $Gr(T )=\{(T u,u)|\;u\in M\}$ is a subalgebra of the $\mathcal{H}$-twisted semi-direct product  $L\ltimes_\theta^{\mathcal{H}} M$.
\end{pro}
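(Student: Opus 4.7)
The plan is to reduce the subalgebra condition for $Gr(T)$ to a single identity in $M$, by directly computing the ternary bracket of three elements of $Gr(T)$ inside $L\ltimes_\theta^{\mathcal{H}} M$ and checking when the result lies back in $Gr(T)$. Since $Gr(T)$ is the set of pairs $(Tu,u)$ with $u\in M$, an element $(a,b)\in L\oplus M$ lies in $Gr(T)$ if and only if $a=Tb$, and this is the only condition we have to keep track of.

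More concretely, first I would pick arbitrary $u,v,w\in M$ and apply the defining formula \eqref{twistedsemidirect product} of the twisted bracket to $(Tu,u),(Tv,v),(Tw,w)$. This yields
\begin{align*}
&[(Tu,u),(Tv,v),(Tw,w)]_{\mathcal{H}}\\
&= \bigl([Tu,Tv,Tw],\,\theta(Tv,Tw)u-\theta(Tu,Tw)v+D(Tu,Tv)w+\mathcal{H}(Tu,Tv,Tw)\bigr).
\end{align*}
Next I would observe that this element belongs to $Gr(T)$ precisely when its first component equals $T$ applied to its second component, i.e.
\[
[Tu,Tv,Tw]=T\bigl(\theta(Tv,Tw)u-\theta(Tu,Tw)v+D(Tu,Tv)w+\mathcal{H}(Tu,Tv,Tw)\bigr),
\]
which is exactly the generalized Reynolds identity \eqref{O op on lts}.

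Since the previous equivalence holds for every triple $(u,v,w)\in M^{3}$, both implications are immediate: if $T$ is a generalized Reynolds operator, then every such bracket lies in $Gr(T)$, so $Gr(T)$ is closed under the ternary operation and hence is a subalgebra of $L\ltimes_\theta^{\mathcal{H}} M$; conversely, if $Gr(T)$ is a subalgebra, then the above computation forces \eqref{O op on lts} for all $u,v,w\in M$. There is essentially no obstacle here, as the proof is a direct unpacking of definitions; the only thing to be careful about is matching the signs and the positions of the arguments of $\theta$ and $D$ in \eqref{twistedsemidirect product} against those in \eqref{O op on lts}, so that the computed bracket agrees term-by-term with the right-hand side of the generalized Reynolds identity.
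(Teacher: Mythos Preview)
Your proposal is correct and follows essentially the same approach as the paper's own proof: both compute the twisted bracket of three graph elements via \eqref{twistedsemidirect product} and observe that closure under the bracket is equivalent to the generalized Reynolds identity \eqref{O op on lts}. Your write-up is in fact slightly more explicit in spelling out why an element of $L\oplus M$ lies in $Gr(T)$ and in separating the two implications.
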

\begin{proof}
Let $T  :M\rightarrow L$ be  a linear map. For all  $u,v w \in M$,  we have:\small{
\begin{align*}
&[T u+u,T v+v,T w+w]_{\mathcal{H}}\\=&
\Big([T u,T v,T w],~~  \theta(T v,T w)u-\theta(T u,T w)v+D(T u,T v)w+ \mathcal{H}(T u,T v,T w)\Big).
\end{align*}}
 which implies that the graph  $Gr(T )$ is a subalgebra of the $\mathcal{H}$-twisted semi-direct product  $L\ltimes_\theta^{\mathcal{H}} M$ if and only  if  $T$ satisfies  \small{
\begin{align*}
&[Tu,Tv,Tw]=
T\Big(   \theta(Tv,Tw)u-\theta(Tu,Tw)v+D(Tu,Tv)w + \mathcal{H}(T u,T v,T w) \Big),
\end{align*}}
which means that $T$ is a generalized Reynolds operator.
\end{proof}
Since $M$ and $ Gr(T)$ are isomorphic as vector spaces, we get the following corollary.
\begin{cor}\label{coro}
Let $T  :M\rightarrow L$  be a generalized Reynolds operator on  a \textsf{L.t.sRep} pair $(L, [\cdot,\cdot, \cdot],\theta)$. Then there is a  \textsf{L.t.s} structure  on $M$ given by:
\begin{align}\label{ltsmod}
    & [u, v,w]_T=\theta(Tv,Tw)u-\theta(Tu,Tw)v+D(Tu,Tv)w+ \mathcal{H}(T u,T v,T w)\quad\forall ~u,v,w \in M.
\end{align}
 Furthermore, $T$ is a  morphism from    $(M,[\cdot,\cdot, \cdot]_T )$ to  $(L,[\cdot,\cdot, \cdot] )$.
\end{cor}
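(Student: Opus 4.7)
The plan is to avoid verifying the Lie triple system axioms \eqref{lts 1} and \eqref{lts 2} for $[\cdot,\cdot,\cdot]_T$ by direct computation, and instead transport the structure along a vector space isomorphism from an already-known L.t.s. By Proposition \ref{graph}, the graph $Gr(T)=\{(Tu,u)\mid u\in M\}$ is a subalgebra of the twisted semi-direct product $L\ltimes_\theta^{\mathcal{H}} M$, which is itself a Lie triple system. Hence $Gr(T)$ inherits a Lie triple system structure from $L\ltimes_\theta^{\mathcal{H}} M$.

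Next, I would use the obvious linear isomorphism $\iota:M\to Gr(T)$, $u\mapsto (Tu,u)$, to transport the bracket. Computing via the definition \eqref{twistedsemidirect product}, for all $u,v,w\in M$ one has
\begin{align*}
[\iota(u),\iota(v),\iota(w)]_{\mathcal{H}}=\Bigl([Tu,Tv,Tw],\ \theta(Tv,Tw)u-\theta(Tu,Tw)v+D(Tu,Tv)w+\mathcal{H}(Tu,Tv,Tw)\Bigr),
\end{align*}
and, since $T$ is a generalized Reynolds operator, the identity \eqref{O op on lts} forces the first component to equal $T$ applied to the second component. Thus the right-hand side is precisely $\iota\bigl([u,v,w]_T\bigr)$, with $[u,v,w]_T$ as in \eqref{ltsmod}. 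This shows that $\iota$ is a L.t.s isomorphism from $(M,[\cdot,\cdot,\cdot]_T)$ onto $Gr(T)$, and therefore $(M,[\cdot,\cdot,\cdot]_T)$ is itself a Lie triple system, the axioms \eqref{lts 1} and \eqref{lts 2} being automatically transferred.

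Finally, the morphism property $T([u,v,w]_T)=[Tu,Tv,Tw]$ is nothing but the defining equation \eqref{O op on lts} of a generalized Reynolds operator rewritten using \eqref{ltsmod}, so it holds without further work. I do not anticipate any real obstacle here; the only mild subtlety is to recognize that the subalgebra structure given by Proposition \ref{graph} already encodes all axioms, so checking the skew-symmetry, the Jacobi-type identity \eqref{lts 1}, and the fundamental identity \eqref{lts 2} separately for $[\cdot,\cdot,\cdot]_T$ — which would otherwise be lengthy since $\mathcal{H}$ satisfies the non-trivial cocycle conditions \eqref{SkewCochain}--\eqref{2.12} — is entirely bypassed.
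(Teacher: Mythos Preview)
Your proposal is correct and is exactly the approach the paper takes: the paper simply remarks that ``$M$ and $Gr(T)$ are isomorphic as vector spaces'' immediately after Proposition~\ref{graph} and deduces the corollary, which is precisely your transport-of-structure argument spelled out in detail.
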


\begin{defi}
Let $T  : M \to L $  be a generalized Reynolds operator  on a  \textsf{L.t.sRep} pair   $(L, [\cdot,\cdot, \cdot],\theta)$.
 The map $\varphi \in  \mathcal{Z}_{L.t.s}^{1}(L , M )$ is called a \textbf{$T$-admissible $1$-cocycle}  if the linear map  $(Id + \varphi \circ  T)  : M\to M$ is invertible.
\end{defi}
\begin{pro}
Let $\varphi$ be a $T$-admissible $1$-cocycle, then
$T \circ (Id + \varphi \circ  T)^{-1}  : M\to L$ is a generalized Reynolds operator, which we denote by  $T_\varphi$.
\end{pro}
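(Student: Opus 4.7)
The plan is to exploit the graph characterization (Proposition \ref{graph}) together with the isomorphism of twisted semi-direct products provided by Proposition \ref{prop}, rather than grinding through the defining identity \eqref{O op on lts} directly.

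First I would observe that, since $T$ is a generalized Reynolds operator with respect to the $3$-cocycle $\mathcal{H}$, its graph $Gr(T) = \{(Tu, u) \mid u \in M\}$ is a subalgebra of the twisted semi-direct product L.t.s $L\ltimes_\theta^{\mathcal{H}} M$. Next, since $\varphi \in \mathcal{Z}_{L.t.s}^{1}(L,M)$, the map $\delta^1 \varphi$ is a $3$-coboundary, and since $\delta^3 \circ \delta^1 = 0$ together with the fact that $\delta^1\varphi$ automatically satisfies the skew and cyclic conditions \eqref{SkewCochain}--\eqref{2.11}, the modified cochain $\mathcal{H}' := \mathcal{H} - \delta^1\varphi$ is again a $3$-cocycle. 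Therefore Proposition \ref{prop} applies and $\kappa_\varphi : L\ltimes_\theta^{\mathcal{H}} M \to L\ltimes_\theta^{\mathcal{H}'} M$ is an isomorphism of L.t.s.

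Because $\kappa_\varphi$ is an isomorphism of L.t.s, it sends subalgebras to subalgebras; in particular $\kappa_\varphi(Gr(T))$ is a subalgebra of $L\ltimes_\theta^{\mathcal{H}'} M$. The key computation is then to identify this image. For $u \in M$,
\begin{equation*}
\kappa_\varphi(Tu, u) = \bigl(Tu,\ u + \varphi(Tu)\bigr) = \bigl(Tu,\ (Id + \varphi \circ T)(u)\bigr).
\end{equation*}
Setting $v = (Id + \varphi \circ T)(u)$, which is a bijection on $M$ by the $T$-admissibility of $\varphi$, we have $u = (Id + \varphi \circ T)^{-1}(v)$ and hence $Tu = T \circ (Id + \varphi \circ T)^{-1}(v) = T_\varphi(v)$. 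Thus $\kappa_\varphi(Gr(T)) = \{(T_\varphi(v), v) \mid v \in M\} = Gr(T_\varphi)$.

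Applying the ``if'' direction of Proposition \ref{graph} to the pair $(L,[\cdot,\cdot,\cdot],\theta)$ with the $3$-cocycle $\mathcal{H}'$ then gives that $T_\varphi$ is a generalized Reynolds operator (with respect to $\mathcal{H}'$), as desired. I expect the only subtle point to be bookkeeping: verifying cleanly that the $3$-cocycle underlying $T_\varphi$ has been shifted from $\mathcal{H}$ to $\mathcal{H} - \delta^1\varphi$, and checking that the skew-symmetry and cyclic identities \eqref{SkewCochain}--\eqref{2.11} are preserved under this shift. A direct verification of \eqref{O op on lts} for $T_\varphi$ would also work but would require substituting $u \mapsto (Id+\varphi\circ T)^{-1}(u)$ throughout and using the $1$-cocycle identity \eqref{1cocycle}; the graph argument bypasses this computation entirely.
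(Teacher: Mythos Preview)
Your proof is correct and follows essentially the same approach as the paper's own proof: both use Proposition~\ref{graph} to identify $Gr(T)$ as a subalgebra of $L\ltimes_\theta^{\mathcal{H}} M$, push it forward through the isomorphism $\kappa_\varphi$ of Proposition~\ref{prop}, recognize the image as $Gr(T_\varphi)$ using the invertibility of $Id+\varphi\circ T$, and then apply Proposition~\ref{graph} in reverse. Your version is somewhat more explicit about the bijection $v=(Id+\varphi\circ T)(u)$ and about why $\mathcal{H}'=\mathcal{H}-\delta^1\varphi$ remains a $3$-cocycle, but there is no substantive difference in strategy.
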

\begin{proof}
Let $\varphi$ be a $T$-admissible $1$-cocycle.  By   Propositions \ref{prop} and  \ref{graph},  we have	\begin{equation*}   \kappa_\varphi(Gr(T)) = \{(Tu, u + \varphi (Tu))|u \in M\} \subseteq L\ltimes_\theta^{\mathcal{H}'} M,\end{equation*} is a subalgebra of the $\mathcal{H}'$-twisted semi-direct
product L.t.s  $ L\ltimes_\theta^{\mathcal{H}'} M$, where ${\mathcal{H}'}=\mathcal{H}- \delta^{1} \varphi$. Since the linear map $(Id+ \varphi \circ T) : M \to  M$ is invertible, so	$\kappa_\varphi(Gr(T))$ is
the graph of $T\circ (Id+ \varphi \circ T)^{-1} : M \to L$, which implies that $T\circ (Id+ \varphi \circ T)^{-1} $ is a generalized Reynolds
operator. This completes the proof.
\end{proof}
Recall from Corollary \ref{coro} that a generalized Reynolds operator induces a Lie triple system on $M$.
We have the following proposition.

\begin{pro}
Let $T$ be a generalized Reynolds operator and $\varphi$ be a $T$-admissible $1$-cocycle.
Then the Lie triple system structures  on $M$ induced by $T$ and $T_\varphi$ are isomorphic.
\end{pro}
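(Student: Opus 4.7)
The plan is to exhibit an explicit isomorphism $\phi : (M,[\cdot,\cdot,\cdot]_T) \to (M,[\cdot,\cdot,\cdot]_{T_\varphi})$ and verify that it intertwines the two induced ternary brackets given by Corollary \ref{coro}. The natural candidate, suggested by the very construction of $T_\varphi$, is
\begin{equation*}
\phi := Id + \varphi \circ T : M \longrightarrow M,
\end{equation*}
which is invertible precisely because $\varphi$ is $T$-admissible, and satisfies the crucial intertwining relation $T_\varphi \circ \phi = T$ by the definition $T_\varphi = T\circ (Id+\varphi\circ T)^{-1}$.

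First I would recall that the two brackets read, for all $u,v,w\in M$,
\begin{align*}
[u,v,w]_T &= \theta(Tv,Tw)u-\theta(Tu,Tw)v+D(Tu,Tv)w+\mathcal{H}(Tu,Tv,Tw),\\
[u,v,w]_{T_\varphi} &= \theta(T_\varphi v,T_\varphi w)u-\theta(T_\varphi u,T_\varphi w)v+D(T_\varphi u,T_\varphi v)w+\mathcal{H}'(T_\varphi u,T_\varphi v,T_\varphi w),
\end{align*}
where $\mathcal{H}'=\mathcal{H}-\delta^1\varphi$ is the $3$-cocycle with respect to which $T_\varphi$ is a generalized Reynolds operator, as established in Proposition~\ref{prop} and the proof that precedes the statement. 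Keeping the identity $T_\varphi\circ\phi=T$ in mind, every occurrence of $T_\varphi\phi(u)$ in the right-hand side for $T_\varphi$ collapses to $Tu$.

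The core computation is then to expand both $\phi([u,v,w]_T)$ and $[\phi(u),\phi(v),\phi(w)]_{T_\varphi}$ and subtract. Using that $T$ is a morphism of Lie triple systems from $(M,[\cdot,\cdot,\cdot]_T)$ to $(L,[\cdot,\cdot,\cdot])$ (Corollary~\ref{coro}), one gets
\begin{equation*}
\phi([u,v,w]_T)=[u,v,w]_T+\varphi([Tu,Tv,Tw]).
\end{equation*}
After substitution, the difference $[\phi(u),\phi(v),\phi(w)]_{T_\varphi}-\phi([u,v,w]_T)$ reduces to
\begin{equation*}
\big(\mathcal{H}'-\mathcal{H}\big)(Tu,Tv,Tw)+\theta(Tv,Tw)\varphi(Tu)-\theta(Tu,Tw)\varphi(Tv)+D(Tu,Tv)\varphi(Tw)-\varphi([Tu,Tv,Tw]),
\end{equation*}
and I would close by observing that $\mathcal{H}'-\mathcal{H}=-\delta^1\varphi$ together with the explicit expression \eqref{1cocycle} of $\delta^1\varphi$ makes this expression identically zero.

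The step I expect to be the most delicate is not the algebra itself, which is a short bookkeeping exercise, but rather being careful about which twisting cocycle governs which bracket: the induced structure $[\cdot,\cdot,\cdot]_T$ uses $\mathcal{H}$ while $[\cdot,\cdot,\cdot]_{T_\varphi}$ must use $\mathcal{H}'=\mathcal{H}-\delta^1\varphi$. Recognizing that the failure of $\phi$ to be a strict morphism is exactly cancelled by the coboundary correction $\delta^1\varphi$ is the conceptual heart of the proof, and once this identification is made the verification is immediate.
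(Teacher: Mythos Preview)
Your proof is correct and follows essentially the same route as the paper: both use $\phi = Id + \varphi\circ T$ as the isomorphism, exploit $T_\varphi\circ\phi = T$, and reduce the verification to the identity encoded in \eqref{1cocycle}. The only cosmetic difference is that the paper uses the hypothesis that $\varphi$ is a $1$-\emph{cocycle} (not merely a cochain) right away, so $\delta^1\varphi=0$ and hence $\mathcal{H}'=\mathcal{H}$; your version carries $\mathcal{H}'$ through and cancels $-\delta^1\varphi$ against $\delta^1\varphi$ at the end, which is a harmless detour that in fact shows the conclusion holds even when $\varphi$ is only a $1$-cochain.
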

\begin{proof}
Consider the linear map $(Id  + \varphi \circ T)   : M \to M$. For all $u, v, w \in M$, we have\small {
\begin{eqnarray*}
    &&[(Id +  \varphi \circ T)u,(Id  +  \varphi \circ T)v ,(Id  +  \varphi \circ T) w]_{T_ \varphi} \\
  & = &\theta(T u,T v) (Id  +  \varphi \circ T)w -  \theta(T u,T w)(Id  +  \varphi \circ T) v\\& &+  D(T v,T w) (Id  +  \varphi \circ T)u + \mathcal{H}(T u,T v,T w)\\
  & = &\theta(T u,T v)w   -  \theta(T u,T w) v +  D(T v,T w) u + \mathcal{H}(T u,T v,T w)\\
 & & +\theta(T u,T v)  ( \varphi \circ T)w-  \theta(T u,T w)( \varphi\circ T) v +  D(T v,T w) (  \varphi \circ T)u \\
 & \overset{\eqref{1cocycle}}{=}&[ u,   v,  w]_{T} +  \varphi ([T u,T v,T w])\\
  &=&[ u,   v,  w]_{T} +  \varphi \circ T([ u, v, w]_{T})\\&=&(Id +  \varphi \circ T)[ u,   v,  w]_{T} .
\end{eqnarray*}}
Thus $(Id + \varphi \circ T )$ is an isomorphism of Lie triple systems.
\end{proof}
In Ref.  \cite{Das-Twisted},  the author  introduced a generalized Reynolds operator on LieRep pair $(L,[\c,\c] ,\rho)$ with a $2$-cocycle  $H$ in the Chevalley-Eilenberg  cohomology, as a linear map $T  : M \to L  $  satisfying
 \begin{align*}
     [T u,T v]=T\Big(\rho (T u)v+\rho (T v)u+ H(T u,T v)\Big) \quad \forall u,v \in M.
 \end{align*}

 \begin{pro}
 Let $T : M \to L$ be a generalized Reynolds operator on a LieRep pair $(L,[\c,\c] ,\rho)$. Then $T$ is also a generalized Reynolds operator on the induced \textsf{L.t.sRep} pair.
 \end{pro}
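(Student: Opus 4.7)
The plan is to exhibit a $3$-cocycle $\mathcal{H}$ on the induced \textsf{L.t.sRep} pair with respect to which the given map $T$ satisfies the defining identity \eqref{O op on lts}. The canonical choice, suggested by the theorem stated just before Section \ref{Section-3}, is
$$\mathcal{H}(x,y,z) := H([x,y],z) - \rho(z)\, H(x,y),$$
where $H$ is the Chevalley--Eilenberg $2$-cocycle twisting $T$ on the Lie algebra. That theorem already certifies $\mathcal{H} \in \mathcal{Z}^{3}_{L.t.s}(L,M)$ with respect to the induced representation $\theta_\rho$ from Theorem \ref{Lie.L.t.s}, so what remains is to verify the generalized Reynolds identity for this particular $\mathcal{H}$.

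The main step is a direct manipulation of $[Tu,Tv,Tw] = [[Tu,Tv],Tw]$. First I would apply the Lie-algebra twisted identity for $T$ to the inner bracket $[Tu, Tv]$, writing it as $T\xi$ for an explicit element $\xi \in M$; then apply the same identity to the resulting outer bracket $[T\xi, Tw]$. Using the Lie-representation relation $\rho([Tu,Tv]) = \rho(Tu)\rho(Tv) - \rho(Tv)\rho(Tu)$, the expression under the outermost $T$ groups into four pieces: a commutator term $\rho([Tu,Tv])w$, two mixed terms involving $\rho(Tw)\rho(Tv)u$ and $\rho(Tw)\rho(Tu)v$, and a cocycle correction $H([Tu,Tv], Tw) - \rho(Tw)\, H(Tu, Tv)$.

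Matching this against the right-hand side of \eqref{O op on lts} is then essentially a bookkeeping exercise once one recalls from Theorem \ref{Lie.L.t.s} that $\theta_\rho(x,y) = \rho(y)\rho(x)$ and hence $D(x,y) = \rho([x,y])$: the commutator term becomes exactly $D(Tu,Tv)w$; the two mixed terms become $\theta_\rho(Tv,Tw)u$ and $-\theta_\rho(Tu,Tw)v$; and the remaining $H$-terms assemble precisely into $\mathcal{H}(Tu,Tv,Tw)$ by the definition above. The only delicate point is the tracking of signs when passing from the binary Lie convention to the ternary L.t.s.\ convention; I do not anticipate any conceptual obstacle beyond this.
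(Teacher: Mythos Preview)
Your proposal is correct and is precisely the computation the paper has in mind: the paper states this proposition without proof, but the canonical $3$-cocycle $\mathcal{H}(x,y,z)=H([x,y],z)-\rho(z)H(x,y)$ from the theorem closing Section~\ref{Section-2} together with the two-step expansion $[Tu,Tv,Tw]=[[Tu,Tv],Tw]=[T\xi,Tw]$ is the intended argument, and your term-matching via $\theta_\rho(x,y)=\rho(y)\rho(x)$ and $D(x,y)=\rho([x,y])$ is exactly right.
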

 \section{Yamaguti cohomology of generalized Reynolds operators}\label{Section-4}
 In this section, after recalling the notion of $L_\infty$-algebra, we construct one on a given   graded vector space  whose Maurer-Cartan elements are generalized Reynolds operators on \textsf{L.t.sRep}  pairs. It is the Maurer-Cartan   characterization of generalized Reynolds operator $T$. This characterization allows us to introduce the Yamaguti
cohomology of $T$. Next, we show that the cohomology of $T$ is equivalently described by the Yamaguti cohomology of $M$ with coefficients in a suitable representation   $L$.
 \subsection{$L_\infty$-algebra and Maurer-Cartan characterization}
A permutation $\sigma \in \mathbb S_n $ is called an $(i, n-i)$-shuffle if $\sigma(1) <....<\sigma(i) $ and $\sigma(i+1) <....<\sigma(n) $. If $i = 0$ or $i = n$, we assume $\sigma = Id$. The set of all $(i, n - i)$-shuffles will be
denoted by $\mathbb S_{(i,n-i)}$.
\begin{defi}
An  $L_\infty$-algebra is a $\mathbb Z$-graded vector space $\mathfrak (\mathfrak{g}=\oplus_{k\in\mathbb Z}\mathfrak{g}^k)$ equipped with a collection $(k \ge 1)$ of linear maps $l_k :\otimes^k\mathfrak g\to \mathfrak g$ of degree $1$ with the property that, for any homogeneous elements $x_1,\cdots,x_n\in \mathfrak g$, we have
\begin{itemize}\item[\rm(i)]
graded symmetry:
\begin{eqnarray*}
l_n(x_{\sigma(1)},\cdots,x_{\sigma(n-1)},x_{\sigma(n)})=\varepsilon(\sigma)l_n(x_1,\cdots,x_{n-1},x_n) \quad \forall  \sigma\in\mathbb S_{n},
\end{eqnarray*}
\item[\rm(ii)] generalized Jacobi identity:
\begin{eqnarray*}\label{sh-Lie}
\sum_{i=1}^{n}\sum_{\sigma\in \mathbb S_{(i,n-i)} }\varepsilon(\sigma)l_{n-i+1}(l_i(x_{\sigma(1)},\cdots,x_{\sigma(i)}),x_{\sigma(i+1)},\cdots,x_{\sigma(n)})=0 \quad \forall n \ge 1.
\end{eqnarray*}
\end{itemize}
\end{defi}
\begin{defi}
A Maurer-Cartan element  of an $L_\infty$-algebra $(\mathfrak g=\oplus_{k\in\mathbb Z}\mathfrak g^k,\{l_i\}_{i=1}^{+\infty})$ is an element $\pi\in \mathfrak g^0$ satisfying the Maurer-Cartan equation
\begin{eqnarray}\label{MC-equationL}
\sum_{n=1}^{+\infty} \frac{1}{n!}l_n(\pi,\cdots,\pi)=0.
\end{eqnarray}
\end{defi}
Let $\pi$ be a Maurer-Cartan element of an $L_\infty$-algebra $(\mathfrak{g},\{l_i\}_{i=1}^{+\infty})$. For all $k\geq1$ and $x_1,\cdots,x_n\in \mathfrak g,$
define a series of linear maps $l_k^\pi:\otimes^k \mathfrak g\to \mathfrak g$ of degree $1$ by
\begin{eqnarray}
 l^{\pi}_{k}(x_1,\cdots,x_k)=\sum^{+\infty}_{n=0}\frac{1}{n!}l_{n+k}\{\underbrace{\pi,\cdots,\pi}_n,x_1,\cdots,x_k\}.
\end{eqnarray}

\begin{thm}\label{thm:twist}\cite{Getzler}
With the above notations, $(\mathfrak{g},\{l^{\pi}_i\}_{i=1}^{+\infty})$ is an $L_{\infty}$-algebra, obtained from the  $L_\infty$-algebra  $(\mathfrak{g},\{l_i\}_{i=1}^{+\infty})$ by twisting with the Maurer-Cartan element $\pi$. Moreover, $\pi+\pi'$ is a Maurer-Cartan element of  $(\mathfrak g,\{l_i\}_{i=1}^{+\infty})$ if and only if $\pi'$ is a Maurer-Cartan element of  the twisted $L_{\infty}$-algebra  $(\mathfrak{g},\{l^{\pi}_i\}_{i=1}^{+\infty})$ .
\end{thm}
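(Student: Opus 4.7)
The plan is to break the theorem into two parts: first, that the twisted operations $\{l^\pi_k\}_{k\geq 1}$ define an $L_\infty$-structure; second, the shifted Maurer--Cartan equivalence. Both parts rest on the same core manipulation, namely expanding a sum in which $\pi$ appears as a slot variable.

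For the graded symmetry of each $l^\pi_k$, the argument is essentially cosmetic: in
$$l_k^\pi(x_1,\ldots,x_k) = \sum_{n\geq 0} \tfrac{1}{n!}\, l_{n+k}(\underbrace{\pi,\ldots,\pi}_n,x_1,\ldots,x_k),$$
any permutation $\sigma\in\mathbb S_k$ of the $x_i$ extends by the identity on the $\pi$-slots to a permutation in $\mathbb S_{n+k}$ with the same Koszul sign, so graded symmetry for $l^\pi_k$ follows term-by-term from that of $l_{n+k}$.

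For the generalized Jacobi identity, I would expand
$$\sum_{i=1}^{n}\sum_{\sigma\in \mathbb S_{(i,n-i)}} \varepsilon(\sigma)\, l_{n-i+1}^\pi\bigl(l_i^\pi(x_{\sigma(1)},\ldots,x_{\sigma(i)}),x_{\sigma(i+1)},\ldots,x_{\sigma(n)}\bigr)$$
as a double sum over the number $a$ of $\pi$-inputs fed into the inner bracket and the number $b$ fed into the outer bracket, weighted by $\tfrac{1}{a!\,b!}$. After rearrangement, each summand is precisely what one obtains from the original generalized Jacobi identity applied to a tuple of the shape $(\underbrace{\pi,\ldots,\pi}_{a+b},x_1,\ldots,x_n)$, summed over $(a+i, \,b+n-i+1)$-shuffles that send the $\pi$-slots to the left. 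Since $\pi$ has degree zero and hence no sign contributions arise from permuting $\pi$ among itself, the subsum of terms in which no $x_j$ enters the inner bracket (i.e.\ terms of the form $l^\pi_{\cdot}(l_m(\pi,\ldots,\pi),x_1,\ldots,x_n)$) vanishes by the Maurer--Cartan equation $\sum_{m\geq 1}\tfrac{1}{m!}l_m(\pi,\ldots,\pi)=0$, while all other terms cancel by the original Jacobi identity. This reindexing is the genuine content of the theorem and the one step that demands careful bookkeeping.

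For the moreover statement, multilinearity and graded symmetry give, since $\pi,\pi'\in\mathfrak g^0$,
\begin{align*}
\sum_{n\geq 1}\tfrac{1}{n!}\, l_n(\pi+\pi',\ldots,\pi+\pi')
&= \sum_{n\geq 1}\sum_{k=0}^{n}\tfrac{1}{k!(n-k)!}\, l_n(\underbrace{\pi,\ldots,\pi}_{n-k},\underbrace{\pi',\ldots,\pi'}_{k}) \\
&= \sum_{n\geq 1}\tfrac{1}{n!}\, l_n(\pi,\ldots,\pi) + \sum_{k\geq 1}\tfrac{1}{k!}\, l_k^\pi(\pi',\ldots,\pi'),
\end{align*}
after isolating the $k=0$ contribution and reindexing $m=n-k$ in the remaining double series. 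Since $\pi$ is Maurer--Cartan, the first summand vanishes, so the left-hand side vanishes if and only if $\pi'$ satisfies the Maurer--Cartan equation of the twisted $L_\infty$-algebra $(\mathfrak g,\{l^\pi_i\})$, which is exactly the claimed equivalence. The main obstacle throughout is the combinatorial identity underlying the twisted Jacobi; everything else is a clean rearrangement driven by the same expansion.
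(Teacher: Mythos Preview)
The paper does not supply a proof of this theorem; it is stated with a citation to Getzler \cite{Getzler} and used as a black box. Your sketch is the standard argument for this result and is correct in outline: graded symmetry is immediate since $\pi$ has degree zero, the twisted Jacobi identity follows by expanding both layers of $\pi$-insertions and matching against the original Jacobi relation for the tuple $(\pi,\ldots,\pi,x_1,\ldots,x_n)$ (with the Maurer--Cartan equation killing the terms whose inner bracket contains only copies of $\pi$), and the ``moreover'' part is exactly the binomial expansion you wrote. There is nothing to compare against in the paper itself, but your approach is the one found in Getzler's work and in standard treatments of twisted $L_\infty$-structures.
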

Let $L$ be a vector space. Consider the graded vector space $C^\ast(L,L)=\oplus_{n\geq 0}   C^n(L,L),$
where  $   C^n(L,L)$ is the set of linear maps $P\in \text{Hom} (\underbrace{(\wedge^{2} L)\otimes \cdots\otimes (\wedge^{2}L)}_{n\geq0}\otimes L ,L),$
 satisfying
\begin{align}
  P(\mathfrak{X}_1,\mathfrak{X}_2,\cdots,\mathfrak{X}_{n-1},x,x,y)=&0, \label{skewsym}  \\
 \circlearrowleft_{x,y,z}  P(\mathfrak{X}_1, \mathfrak{X}_2, \cdots, \mathfrak{X}_{n-1}, x, y, z)=&0. \label{Jac identity}
\end{align}
 The degree of an element in $C^n(L,L)$ is  defined to be $n$. Define
\begin{eqnarray}\label{L.t.s-bracket}
[P,Q]_{L.t.s}=P{\circ}Q-(-1)^{pq}Q{\circ}P \quad \forall~ P\in \mathfrak C^{p}(L,L),Q\in \mathfrak C^{q}(L,L),
\end{eqnarray}
 with $P{\circ}Q\in C^{p+q}(L,L)$ and defined by
{\footnotesize
\begin{equation*}
\begin{aligned}
&(P{\circ}Q)(\mathfrak{X}_1,\cdots,\mathfrak{X}_{p+q},x)\\
=&\sum_{k=1}^{p}(-1)^{(k-1)q}\sum_{\sigma\in \mathcal{S}(k-1,q)}(-1)^\sigma P(\mathfrak{X}_{\sigma(1)},\cdots,\mathfrak{X}_{\sigma(k-1)},
Q(\mathfrak{X}_{\sigma(k)},\cdots,\mathfrak{X}_{\sigma(k+q-1)},x_{k+q})\wedge y_{k+q},\mathfrak{X}_{k+q+1},\cdots,\mathfrak{X}_{p+q},x)\\
&+\sum_{k=1}^{p}(-1)^{(k-1)q}\sum_{\sigma\in \mathcal{S}(k-1,q)}(-1)^\sigma P(\mathfrak{X}_{\sigma(1)},\cdots,\mathfrak{X}_{\sigma(k-1)},x_{k+q}\wedge
Q(\mathfrak{X}_{\sigma(k)},\cdots,\mathfrak{X}_{\sigma(k+q-1)},y_{k+q}),\mathfrak{X}_{k+q+1},\cdots,\mathfrak{X}_{p+q},x)\\
&+\sum_{\sigma\in \mathcal S(p,q)}(-1)^{pq}(-1)^\sigma P(\mathfrak{X}_{\sigma(1)},\cdots,\mathfrak{X}_{\sigma(p)},
Q(\mathfrak{X}_{\sigma(p+1)},\cdots,\mathfrak{X}_{\sigma(p+q-1)},\mathfrak{X}_{\sigma(p+q)},x)),\\
\end{aligned}
\end{equation*}
}
where $\sigma$ is a permutation in $(k-1,q)$-shuffle.
\begin{pro}\cite{O-operator}
The graded vector space $  C^*(L,L)$ equipped with the graded commutator bracket defined by \eqref{L.t.s-bracket} is a graded Lie algebra.
\end{pro}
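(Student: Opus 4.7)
The plan is to recognize $(C^*(L,L),\circ)$ as a graded right-symmetric (pre-Lie) algebra; the statement then follows from the general principle that the graded commutator of any graded pre-Lie structure is automatically a graded Lie bracket.

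First I would verify that $\circ$ is well-defined on $C^*(L,L)$, i.e.\ that $P\circ Q$ again satisfies the skew-symmetry condition \eqref{skewsym} and the cyclic identity \eqref{Jac identity} in its last three arguments. This is a routine bookkeeping step: in the definition of $P\circ Q$ only the first $p+q-1$ "wedge slots" are summed over via shuffles, while the last triple $(x_{p+q},y_{p+q},x)$ is acted upon either by a single inner evaluation of $Q$ or left untouched for $P$ to see; the required identities for $P\circ Q$ therefore reduce to the same identities for $P$ and $Q$.

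The main step is to establish the graded right pre-Lie identity
\begin{equation*}
(P\circ Q)\circ R - P\circ (Q\circ R)\;=\;(-1)^{qr}\bigl((P\circ R)\circ Q - P\circ (R\circ Q)\bigr)
\end{equation*}
for all homogeneous $P,Q,R$ of degrees $p,q,r$. Expanding $(P\circ Q)\circ R$ with the three-line formula given before \eqref{L.t.s-bracket} produces two kinds of terms: \emph{nested} terms, in which $R$ is inserted into one of the wedge slots occupied by the output of $Q$, and \emph{disjoint} terms, in which $R$ is inserted into a slot of $P$ that is independent of the slot carrying $Q$. The nested terms reassemble, by the standard shuffle-composition identity splitting an $(a,b+c)$-shuffle into an $(a,b)$-shuffle followed by a $(b,c)$-shuffle, into exactly $P\circ (Q\circ R)$ and therefore cancel on the left-hand side. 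The disjoint terms survive, and regrouping the two independent insertion points of $Q$ and $R$ inside $P$ yields the right-hand side, with the global sign $(-1)^{qr}$ arising from passing $R$ across $Q$.

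Once the pre-Lie identity is in hand, graded antisymmetry of $[\cdot,\cdot]_{L.t.s}$ is immediate from its definition, and the graded Jacobi identity follows by a standard two-line computation: writing out $[P,[Q,R]_{L.t.s}]_{L.t.s}$ and its cyclic permutations in terms of $\circ$, the pre-Lie identity forces all six terms to cancel in pairs. The main obstacle is the combinatorics of Step~3, since the definition of $\circ$ already splits into three families (insertions into the left factor of a wedge, into the right factor, and into the terminal slot $x$), so a naive expansion of $(P\circ Q)\circ R$ produces nine families to be matched against their counterparts; the proof hinges on carefully tracking the shuffle signs and repeatedly applying the shuffle-composition identity so that nested insertions on the two sides of the pre-Lie relation match term by term.
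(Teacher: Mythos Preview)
The paper does not supply its own proof of this proposition: it is stated with the citation \cite{O-operator} and treated as an imported result. So there is nothing in the present paper to compare your argument against.

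On its own merits, your strategy is the right one and is exactly how such brackets are handled in the literature: one shows that $\circ$ is a graded (right) pre-Lie product and then invokes the general fact that the graded commutator of a pre-Lie product satisfies the graded Jacobi identity. Your outline of the well-definedness step is accurate---the skew-symmetry in the last wedge pair survives because the two ``insert $Q$ into $x_{k+q}$'' and ``insert $Q$ into $y_{k+q}$'' terms swap under $x_{p+q}\leftrightarrow y_{p+q}$, and the cyclic identity follows by combining the cyclic identities for $P$ and $Q$. Your description of the associator computation (nested insertions cancel, disjoint insertions are symmetric in $Q,R$ up to the Koszul sign $(-1)^{qr}$) is also correct, and the nine-family bookkeeping you flag is indeed the only real work. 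Nothing is missing conceptually; what remains is the explicit sign-tracking.
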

\begin{pro}
Let $ L$ be a vector space. Then $\pi \in  C^{1}(L,L)$ defines a Lie triple system structure on $L$ if and only if $[\pi ,\pi]_{L.t.s}=0$, i.e., $\pi$ is a Maurer-Cartan element of the graded Lie algebra $ (  C^*(L,L),[\cdot,\cdot]_{L.t.s})$.
Moreover, $(  C^*(L,L),[\cdot,\cdot]_{L.t.s},{\rm d}_{\pi})$ is a differential graded Lie algebra, where ${\rm d}_{\pi}$ is defined by
\begin{eqnarray}
{\rm d}_\pi :=(-1)^{n-1}[\pi,\cdot]_{L.t.s}.
\end{eqnarray}
\end{pro}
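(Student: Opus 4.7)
The approach is to observe that $C^1(L,L)$ is, by construction, exactly the space of trilinear maps $L^{\otimes 3}\to L$ obeying the first L.t.s axiom, and then to check that the Maurer-Cartan equation $[\pi,\pi]_{L.t.s}=0$ encodes the second axiom. Indeed, specializing \eqref{skewsym} and \eqref{Jac identity} to $n=1$ gives $\pi(x,x,y)=0$ and $\pi(x,y,z)+\pi(y,z,x)+\pi(z,x,y)=0$, which is \eqref{lts 1} after identifying $\pi$ with $[\cdot,\cdot,\cdot]$. Hence the first statement reduces to showing $[\pi,\pi]_{L.t.s}=0$ is equivalent to \eqref{lts 2}.

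Since $|\pi|=1$, definition \eqref{L.t.s-bracket} yields $[\pi,\pi]_{L.t.s}=2\,\pi\circ\pi$, so it is enough to expand $(\pi\circ\pi)(x_1\wedge y_1,\, x_2\wedge y_2,\, x)$. With $p=q=1$, the first two summations contribute only at $k=1$ with the identity shuffle in $\mathcal{S}(0,1)$, producing the ``inner-into-each-slot'' terms $[[x_1,y_1,x_2],y_2,x]$ and $[x_2,[x_1,y_1,y_2],x]$; the third summation, with overall sign $(-1)^{pq}=-1$, runs over $\mathcal{S}(1,1)=\{\mathrm{id},(12)\}$ and contributes $-[x_1,y_1,[x_2,y_2,x]]+[x_2,y_2,[x_1,y_1,x]]$. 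Setting the total to zero and rearranging gives precisely \eqref{lts 2} with $(x,y,z,t,e)=(x_1,y_1,x_2,y_2,x)$, completing the Maurer-Cartan characterization.

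For the DGLA claim, the graded Lie algebra structure on $C^*(L,L)$ is already provided by the previous proposition, so I only have to verify that $d_\pi=(-1)^{n-1}[\pi,\cdot]_{L.t.s}$ is a square-zero graded derivation. Square-zero is standard: graded Jacobi with $|\pi|=1$ odd gives $[\pi,[\pi,P]]_{L.t.s}=[[\pi,\pi]_{L.t.s},P]_{L.t.s}-[\pi,[\pi,P]]_{L.t.s}$, so $2[\pi,[\pi,P]]_{L.t.s}=[[\pi,\pi]_{L.t.s},P]_{L.t.s}=0$ by the Maurer-Cartan condition; inserting the signs along two applications of $d_\pi$ then yields $d_\pi^2=0$. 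The graded Leibniz rule is the graded Jacobi identity $[\pi,[P,Q]]_{L.t.s}=[[\pi,P]_{L.t.s},Q]_{L.t.s}+(-1)^{|P|}[P,[\pi,Q]_{L.t.s}]_{L.t.s}$ repackaged through the $(-1)^{n-1}$ twist.

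The delicate part is the explicit expansion of $\pi\circ\pi$: the formula for $\circ$ treats the two components $x_i,y_i$ of each $\mathfrak{X}_i=x_i\wedge y_i$ asymmetrically via the ``$\wedge y_{k+q}$'' and ``$x_{k+q}\wedge$'' insertions in the first two sums, and the shuffle signs in the third sum must combine correctly so that exactly four non-cancelling terms survive and align term-by-term with \eqref{lts 2}. Everything else---the DGLA axioms---then follows mechanically from $[\pi,\pi]_{L.t.s}=0$ and the graded Jacobi identity inherited from the preceding proposition.
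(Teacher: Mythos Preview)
Your proposal is correct and is the natural direct verification. Note, however, that the paper does not actually supply a proof of this proposition: it is stated as a known fact, presumably inherited from \cite{O-operator} along with the preceding proposition on the graded Lie algebra structure. Your computation of $\pi\circ\pi$ is accurate (the four surviving terms are exactly those you list, and their vanishing is precisely \eqref{lts 2}), and your DGLA argument via graded Jacobi is the standard one.
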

Let
$(L, [\cdot,\cdot, \cdot],\theta)$ be a \textsf{L.t.sRep} pair.  For convenience, we use $\pi:\wedge^2L \otimes L  \to L$ to indicate the Lie triple system structure $[\cdot,\cdot,\cdot]$.
Then $\pi+\theta+\mathcal{H}$ corresponds to the semidirect product Lie triple system   structure on $L \oplus M$ given by
\begin{align} [x+u,y+v,z+w]_\mathcal{H}=\Big ([x,y,z], \ \theta(y,z)u-\theta(x,z)v+D(x,y)w+ \mathcal{H}(x,y,z)\Big).
\end{align}
Therefore, we have
$$[\pi+\theta+\mathcal{H},\ \pi+\theta+\mathcal{H}]_{L.t.s}=0.$$
Consider the graded vector space  $  C^\ast(M,L)=\oplus_{n\geq 0} C^n(M,L)$, where $C^n(M,L)$ is the set of linear maps $f \in Hom (\underbrace{(\wedge^{2} M)\otimes \cdots\otimes (\wedge^{2}M)}_{n\geq0}\otimes M, L),$
satisfying
\begin{align}
 f(\mathfrak{U}_1,\mathfrak{U}_2,\cdots,\mathfrak{U}_{n-1},u,u,v)=&0, \\
 \circlearrowleft_{u,v,w} f(\mathfrak{U}_1, \mathfrak{U}_2, \cdots, \mathfrak{U}_{n-1}, u, v, w) =&0, \  \ \forall  \mathfrak{U}_i\in \otimes^{2}M,\;1\leq i\leq n-1.
 \end{align}

Define
\begin{align*}
 & l_3: \mathfrak C^p(M,L) \times \mathfrak C^q(M,L)  \times \mathfrak C^r(M,L) \to \mathfrak C^{p+q+r+1}(M,L), \\
  & l_4: \mathfrak C^p(M,L) \times \mathfrak C^q(M,L)  \times \mathfrak C^r(M,L)\times \mathfrak C^s(M,L) \to \mathfrak C^{p+q+r+s+1}(M,L),
\end{align*}
by
\begin{align*}
 l_3(P,Q,R)=&[[[\pi+\theta,P]_{L.t.s},Q]_{L.t.s},R ]_{L.t.s},\\
l_4(P,Q,R,S)=&[[[[\mathcal{H},P]_{L.t.s},Q]_{L.t.s},R ]_{L.t.s},S]_{L.t.s}.
\end{align*}

Now we  give the Maurer-Cartan characterization of a generalized Reynolds operator on a \textsf{LieRep} pair $(L, [\c,\c,\c],\theta)$.
\begin{pro}
    The graded vector space $C^\ast(M,L)$ is an $L_\infty$-algebra  with
$ \{
     l_1=l_2=0, l_3(\cdot,\cdot,\cdot),  l_4(\cdot,\cdot,\cdot,\cdot)
 \}$
and higher brackets are trivial.
\end{pro}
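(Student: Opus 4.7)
The plan is to realize the $L_\infty$-algebra structure on $C^\ast(M,L)$ via Voronov's higher derived bracket construction, applied to the ambient graded Lie algebra $\mathfrak{g} := C^\ast(L\oplus M, L\oplus M)$ from the preceding proposition. I begin by extending every $P \in C^p(M, L)$ to an element of $\mathfrak{g}^p$ by declaring it to be zero on any tuple containing an $L$-component, with its values in $L$ interpreted inside $L\oplus M$. Under this extension $C^\ast(M, L)$ sits inside $\mathfrak{g}$ as an abelian graded subspace: for $P, Q \in C^\ast(M,L)$, every summand of $P\circ Q$ substitutes the $L$-valued output of $Q$ into an $M$-argument slot of $P$ (and symmetrically for $Q\circ P$), which vanishes by the extension convention, so $[P, Q]_{L.t.s}=0$. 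Moreover $\Omega := \pi + \theta + \mathcal{H}$ is a degree-one element of $\mathfrak{g}$, and the identity $[\Omega, \Omega]_{L.t.s} = 0$ is precisely the compact form of the twisted semi-direct Lie triple system axioms for $L\ltimes_\theta^{\mathcal{H}} M$ already recorded in the excerpt.

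I would then invoke Voronov's theorem on higher derived brackets, which is an instance of the twisting construction from Theorem~\ref{thm:twist} applied to the graded Lie algebra $\mathfrak{g}$ with Maurer-Cartan element $\Omega$: the iterated brackets
$$l_k^\Omega(P_1,\ldots,P_k) \;=\; [\cdots[[\Omega, P_1]_{L.t.s}, P_2]_{L.t.s}, \ldots, P_k]_{L.t.s}$$
define an $L_\infty$-structure on $C^\ast(M, L)$. Graded symmetry and the generalized Jacobi identities are formal consequences of the graded Jacobi identity in $\mathfrak{g}$ together with $[\Omega, \Omega]_{L.t.s}=0$, via the standard Voronov computation. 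What remains is to identify which of the $l_k^\Omega$ actually survive on our chosen subspace, and to match them with the formulas in the statement.

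The surviving-brackets analysis is a target-and-arity bookkeeping. The three summands of $\Omega$ have distinct signatures: $\pi$ takes three $L$-slots to $L$; the components of $\theta$ (including $D$) take two $L$-slots and one $M$-slot to $M$; and $\mathcal{H}$ takes three $L$-slots to $M$. Since every $P_i \in C^\ast(M,L)$ has all-$M$ inputs and $L$-output, a non-zero iterated composite landing in $C^\ast(M,L)$ must (a) fill every free $L$-slot with the $L$-output of some $P_i$, and (b) produce an $L$-valued final output. Case analysis gives: with one or two bracketings there always remains at least one unfilled $L$-slot in $\pi$, $\theta$, or $\mathcal{H}$, so $l_1 = l_2 = 0$; three bracketings exactly fill the three $L$-slots of $\pi$, while the $\theta$-contribution survives by first composing $\theta$ into an $M$-slot of some $P_i$ (producing an $L$-valued intermediate cochain with two free $L$-slots to be filled by the remaining two $P_j$'s), so the two contributions assemble to $l_3(P,Q,R) = [[[\pi+\theta, P]_{L.t.s}, Q]_{L.t.s}, R]_{L.t.s}$; four bracketings are needed for $\mathcal{H}$ because its $M$-output must first be absorbed into an $M$-slot of some $P_i$ before the three inherited $L$-slots can be filled by three further $P_j$'s, giving $l_4(P,Q,R,S) = [[[[\mathcal{H}, P]_{L.t.s}, Q]_{L.t.s}, R]_{L.t.s}, S]_{L.t.s}$; and for $k \geq 5$ no free $L$-slot remains for the extra $P_i$, so $l_k = 0$.

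The main obstacle will be the sign bookkeeping in the last step: verifying that the Koszul signs produced by the formal iterated brackets in $\mathfrak{g}$ are the ones implicit in the stated $l_3, l_4$, and confirming that the skew-symmetry and cyclic vanishing conditions defining $C^\ast(M,L)$ (the analogues of the conditions on $C^\ast(L,L)$) propagate through the derived operations. The latter is essentially automatic from Voronov's graded symmetrization, but still deserves a short explicit check, since the ternary-type combinatorics of the $L.t.s$-bracket makes sign tracking somewhat delicate compared with the binary (associative or Lie) case.
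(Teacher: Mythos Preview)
Your overall strategy is correct and is precisely what the paper has in mind: the paper's own ``proof'' of this proposition is the single line ``Using the above discussion, the first part follows,'' which defers implicitly to the derived-bracket machinery. Your arity/slot analysis for why only $l_3$ and $l_4$ survive is also the right computation.

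There is, however, one genuine error and one omission. The error: Voronov's higher derived bracket construction is \emph{not} an instance of Theorem~\ref{thm:twist}. Getzler's twisting takes an $L_\infty$-algebra with a Maurer--Cartan element and returns a new $L_\infty$-structure on the \emph{same} underlying space; applied to the graded Lie algebra $\mathfrak{g}$ (only $l_2$ nonzero) with Maurer--Cartan element $\Omega$, it produces $l_1^\Omega = [\Omega,\,\cdot\,]$, $l_2^\Omega = [\,\cdot\,,\,\cdot\,]$, and $l_k^\Omega = 0$ for $k\geq 3$---exactly the wrong answer here. Voronov's theorem is a different statement requiring a full V-datum: an abelian subalgebra $\mathfrak{a}$, a projection $P\colon \mathfrak{g} \to \mathfrak{a}$ whose kernel is a graded Lie subalgebra, and $\Omega \in \ker P$ with $[\Omega,\Omega]=0$; the $L_\infty$-brackets on $\mathfrak{a}$ are then the \emph{projected} iterated brackets $P[\cdots[[\Omega, a_1], a_2], \ldots, a_k]$. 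The omission is exactly this V-datum: you should specify the projection (restrict a cochain on $L\oplus M$ to pure $M$-inputs and project the output onto $L$), verify that $\ker P$---the cochains sending $M^{\otimes \bullet}$ into $M$---is closed under $[\cdot,\cdot]_{L.t.s}$, and observe that $\Omega|_{M^{\otimes 3}} = 0$ so $\Omega \in \ker P$. Once that is in place, your slot bookkeeping shows that for $k=3,4$ the iterated bracket already lands in $\mathfrak{a}$ (so the paper's projection-free formulas are legitimate), while for the remaining $k$ the projection annihilates the result.
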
\begin{thm} \label{caracterisation}
   A linear map  $T : M \to L$ is a generalized Reynolds operator  if and only if $T$ is a solution of the Maurer-Cartan equation  of the  $L_\infty$-algebra $(C^\ast(M,L),l_3,l_4)$, i.e.
 $$ \frac{1}{3!}l_3(T,T,T)+\frac{1}{4!}l_4(T,T,T,T)=0.$$
\end{thm}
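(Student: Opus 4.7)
The plan is to show that the Maurer--Cartan equation is a literal rewriting of identity \eqref{O op on lts}. The first step is to embed $T\in\mathrm{Hom}(M,L)$ inside $C^{0}(L\oplus M,L\oplus M)$ via $\tilde T(x+u):=T(u)$, and to identify the brackets $l_3,l_4$ on $C^{*}(M,L)$ with the iterated graded commutators of $\pi+\theta$ (respectively $\mathcal{H}$) with $\tilde T$ inside the graded Lie algebra $(C^{*}(L\oplus M,L\oplus M),[\cdot,\cdot]_{L.t.s})$. Since $\mathrm{Im}(\tilde T)\subseteq L$ and $\tilde T$ vanishes on $L$, many contributions of each circle product are zero, which drastically simplifies the bookkeeping.

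Next, I would evaluate $l_3(T,T,T)$ on a triple $(u,v,w)\in M^{\otimes 3}$. At each step of nesting, the circle product $[-,\tilde T]_{L.t.s}$ inserts $\tilde T$ into one of the input slots of $\pi+\theta$ and symmetrizes over the relevant $(k-1,q)$-shuffles. Because $\tilde T$ must consume $M$-inputs and produce $L$-outputs, the only surviving terms after three insertions are those in which either all three arguments of the outermost $\pi+\theta$ lie in $L$ (producing the contribution $[Tu,Tv,Tw]$) or all three arguments of the outermost $\pi+\theta$ lie in $M$ while the result is fed into $T$ (producing $T\bigl(D(Tu,Tv)w+\theta(Tv,Tw)u-\theta(Tu,Tw)v\bigr)$). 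Collecting these terms and dividing by $3!$ should yield
\begin{equation*}
\tfrac{1}{3!}l_3(T,T,T)(u,v,w)=[Tu,Tv,Tw]-T\bigl(D(Tu,Tv)w+\theta(Tv,Tw)u-\theta(Tu,Tw)v\bigr).
\end{equation*}
An analogous but shorter computation for $l_4(T,T,T,T)$, in which $\mathcal{H}$ takes the role played above by $\pi+\theta$, produces $\tfrac{1}{4!}l_4(T,T,T,T)(u,v,w)=-T\bigl(\mathcal{H}(Tu,Tv,Tw)\bigr)$: indeed $\mathcal{H}$ is already $L$-valued, so its three arguments must all come from $\tilde T$ and the whole expression is then composed with $T$ (no $[Tu,Tv,Tw]$ term appears).

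Summing the two identities produces exactly
\begin{equation*}
[Tu,Tv,Tw]-T\bigl(D(Tu,Tv)w+\theta(Tv,Tw)u-\theta(Tu,Tw)v+\mathcal{H}(Tu,Tv,Tw)\bigr),
\end{equation*}
which vanishes for all $u,v,w\in M$ precisely when $T$ satisfies \eqref{O op on lts}. The main obstacle is combinatorial: carefully matching the $(k-1,q)$-shuffle signs appearing in the definition of $P\circ Q$ with the factors $1/3!$ and $1/4!$ in the Maurer--Cartan equation, and verifying that the a priori many apparent terms collapse to exactly the six tri-bracket contributions without overcounting. Once the observation \emph{``$\tilde T$ kills $L$-inputs''} is used systematically, this becomes a routine sign-chasing exercise, and the equivalence follows.
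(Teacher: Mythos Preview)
Your proposal is correct and follows essentially the same route as the paper: both arguments reduce the Maurer--Cartan equation to the explicit evaluation of $l_3(T,T,T)$ and $l_4(T,T,T,T)$ on a triple $(u,v,w)\in M^{\otimes 3}$, obtaining $6\bigl([Tu,Tv,Tw]-T(\theta(Tv,Tw)u-\theta(Tu,Tw)v+D(Tu,Tv)w)\bigr)$ and $-24\,T\bigl(\mathcal{H}(Tu,Tv,Tw)\bigr)$ respectively, and then compare with \eqref{O op on lts}. The paper simply quotes the $l_3$ computation from \cite{O-operator} and asserts the $l_4$ value, whereas you sketch why the vanishing $\tilde T|_L=0$ forces the surviving terms; that additional detail is welcome. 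One small slip: you write that ``$\mathcal{H}$ is already $L$-valued,'' but in fact $\mathcal{H}\colon L^{\otimes 3}\to M$ is $M$-valued with $L$-inputs---this is precisely why its three arguments must be $Tu,Tv,Tw$ and why the output must then be fed back into $T$; your conclusion is right, only the stated reason needs correcting.
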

\begin{proof}
 Using the above discussion, the first part follows. For the second part, we have that for any $T\in Hom(M,L)$,
    \begin{equation}\label{eqproof1}
        l_4(T,T,T,T)(u,v,w)=-24T(\mathcal{H}(Tu,Tv,Tw)).
    \end{equation}
 Next, according to the Proof of Theorem 4.4 in  \cite{O-operator} we have
 \begin{equation}\label{eqproof2}
     l_3(T,T,T)(u,v,w)=6\Big([Tu,Tv,Tw] -T\big(\theta(Tv,Tw)u-\theta(Tu,Tw)v+D(Tu,Tv)w\big)\Big).
 \end{equation}
 Hence we obtain
 \begin{align*}
   &\Big(\frac{1}{3!}l_3(T,T,T)+\frac{1}{4!}l_4(T,T,T,T)\Big)(u,v,w)\\
   =& [Tu,Tv,Tw]-T\big(\theta(Tv,Tw)u-\theta(Tu,Tw)v+D(Tu,Tv)w+ \mathcal{H}(Tu,Tv,Tw)).
 \end{align*}
 Thus, a linear map $T\in Hom(M, L)$ is a generalized Reynolds operator on a \textsf{L.t.sRep} pair $(L, [\c,\c,\c],\theta)$ if and only if $T$ is a  Maurer-Cartan
element of the  $L_\infty$-algebra $(C^\ast(M,L),l_3,l_4)$.
\end{proof}
\begin{pro}
    Let $T$ be a generalized Reynolds operator  on a \textsf{L.t.sRep} pair $(L, [\c,\c,\c],\theta)$. Then $C^\ast(M,L)$ carries a twisted $L_\infty$-algebra structure given by
    \begin{align}
        &l_1^T(P)=\frac{1}{2}l_3(T,T,P)+\frac{1}{6}l_4(T,T,T,P),\\
        &l_2^T(P,Q)=l_3(T,P,Q)+\frac{1}{2}l_4(T,T,P,Q),\\
        &l_3^T(P,Q,R)=l_3(P,Q,R)+l_4(T,P,Q,R),\\
        & l_4^T(P,Q,R,S)=l_4(P,Q,R,S),\\
        &l_k^T=0,\quad k \geq 5,
    \end{align}
    where $P \in C^p(M,L), Q \in C^q(M,L), R \in C^r(M,L)$ and  S  $\in C^s(M,L)$.
 Moreover,  for any linear map $T' : M \to L$, the sum $T+T'$ is a generalized Reynolds operator if and only if $T'$ is a Maurer-Cartan element in the twisted $L_\infty$-algebra $(C^\ast(M,L),l_1^T,l_2^T,l_3^T,l_4^T)$, i.e  satisfies
 $$l_1^T(T')+\frac{1}{2!}l_2^T(T',T')+\frac{1}{3!}l_3^T(T',T',T')+\frac{1}{4!}l_4^T(T',T',T',T')=0.$$

\end{pro}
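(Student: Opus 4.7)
The proposal is to derive this proposition as a direct application of the twisting theorem for $L_\infty$-algebras (Theorem \ref{thm:twist}) to the $L_\infty$-structure $(C^\ast(M,L), l_3, l_4)$, using the Maurer-Cartan element provided by Theorem \ref{caracterisation}. Since $T$ is a generalized Reynolds operator, Theorem \ref{caracterisation} tells us that $T \in C^1(M,L) = C^\ast(M,L)^0$ is a Maurer-Cartan element of this $L_\infty$-algebra. Thus Theorem \ref{thm:twist} immediately yields a twisted $L_\infty$-algebra $(C^\ast(M,L), \{l_k^T\}_{k \geq 1})$, and it remains only to identify the brackets $l_k^T$ explicitly.

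The computation of the $l_k^T$ uses the defining formula
\begin{equation*}
l_k^T(x_1,\dots,x_k) = \sum_{n=0}^{+\infty} \frac{1}{n!}\, l_{n+k}(\underbrace{T,\dots,T}_{n}, x_1,\dots,x_k).
\end{equation*}
Since in the base $L_\infty$-algebra only $l_3$ and $l_4$ are non-trivial (with $l_1 = l_2 = 0$ and $l_k = 0$ for $k \geq 5$), each series truncates after at most two terms. For $k=1$, only $n=2$ and $n=3$ contribute, giving $l_1^T(P) = \tfrac{1}{2}l_3(T,T,P) + \tfrac{1}{6}l_4(T,T,T,P)$; for $k=2$, only $n=1$ and $n=2$ contribute, giving $l_2^T(P,Q) = l_3(T,P,Q) + \tfrac{1}{2}l_4(T,T,P,Q)$; for $k=3$, only $n=0$ and $n=1$ contribute, giving $l_3^T(P,Q,R) = l_3(P,Q,R) + l_4(T,P,Q,R)$; for $k=4$ only $n=0$ contributes, giving $l_4^T(P,Q,R,S) = l_4(P,Q,R,S)$; and $l_k^T = 0$ for $k \geq 5$. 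These match exactly the formulas claimed.

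For the second statement, apply the second assertion of Theorem \ref{thm:twist}: with $\pi = T$, the element $\pi + \pi' = T + T'$ is Maurer-Cartan for $(C^\ast(M,L), l_3, l_4)$ if and only if $\pi' = T'$ is Maurer-Cartan for the twisted $L_\infty$-algebra $(C^\ast(M,L), \{l_k^T\})$. By Theorem \ref{caracterisation}, the left-hand condition is equivalent to $T+T'$ being a generalized Reynolds operator, while the right-hand condition expands to the displayed equation
\begin{equation*}
l_1^T(T') + \tfrac{1}{2!}l_2^T(T',T') + \tfrac{1}{3!}l_3^T(T',T',T') + \tfrac{1}{4!}l_4^T(T',T',T',T') = 0,
\end{equation*}
the higher terms vanishing by the same truncation argument. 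This yields the desired equivalence.

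There is no genuine obstacle here; the entire proof is bookkeeping on top of Theorem \ref{thm:twist} and Theorem \ref{caracterisation}. The only care needed is to note that $T \in C^1(M,L)$ has degree $0$ in the grading convention used (so $T$ lives in $\mathfrak{g}^0$ as required for a Maurer-Cartan element), and to verify the combinatorial coefficients $1/n!$ produce the claimed $\tfrac12, \tfrac16$, etc. One could also make the second part explicit by substituting the formulas for $l_k^T$ back into the Maurer-Cartan equation and recognizing the result as $\tfrac{1}{3!}l_3(T+T',T+T',T+T') + \tfrac{1}{4!}l_4(T+T',T+T',T+T',T+T') = 0$ modulo the Maurer-Cartan equation for $T$ itself, which provides a self-contained check independent of invoking Theorem \ref{thm:twist} a second time.
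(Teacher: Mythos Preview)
Your proposal is correct and follows essentially the same approach as the paper: invoke Theorem \ref{thm:twist} with the Maurer-Cartan element $T$ supplied by Theorem \ref{caracterisation}, then read off the twisted brackets and the equivalence for $T+T'$. The only cosmetic difference is that for the second part the paper expands $\tfrac{1}{3!}l_3(T+T',\dots)+\tfrac{1}{4!}l_4(T+T',\dots)=0$ by hand and subtracts the Maurer-Cartan equation for $T$ (the direct check you mention at the end), whereas you first appeal to the second assertion of Theorem \ref{thm:twist}; both amount to the same computation.
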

\begin{proof}
  For the first part,   since $T$ is a Maurer-Cartan element of the  $L_\infty$-algebra $(C^\ast(M,L),l_3,l_4)$, by Theorem \ref{thm:twist},
we have that  $C^\ast(M,L)$ carries a twisted $L_\infty$-algebra structure.  For the second part, by Theorem \ref{caracterisation}, $T+T'$ is a generalized Reynolds operator if and only if
\begin{equation}
    \frac{1}{3!}l_3(T+T',T+T',T+T')+\frac{1}{4!}l_4(T+T',T+T',T+T',T+T')=0.
\end{equation}
Applying $\frac{1}{3!}l_3(T,T,T)+\frac{1}{4!}l_4(T,T,T,T)=0$, the above condition is equivalent to
\begin{align*}
    &\frac{1}{3!}\Big(3l_3(T,T,T')+3l_3(T,T',T')+l_3(T',T',T')\Big)\\
    &+\frac{1}{4!}\Big(4l_4(T,T,T,T')+6l_4(T,T,T',T')+4l_4(T,T',T',T')+l_4(T',T',T',T')\Big)=0,
\end{align*}
that is,  $l_1^T(T')+\frac{1}{2!}l_2^T(T',T')+\frac{1}{3!}l_3^T(T',T',T')+\frac{1}{4!}l_4^T(T',T',T',T')=0$, which implies that $T'$ is a Maurer-Cartan element of the twisted $L_\infty$-algebra $(C^\ast(M,L),l_1^T,l_2^T,l_3^T,l_4^T)$.
\end{proof}
The above characterization of a generalized Reynolds operator $T$ allows us to define a cohomology
associated to $T$. More precisely, we define
$C^n_T(M,L)=Hom (\underbrace{\otimes^{2} M\otimes \cdots\otimes (\otimes^{2}M)}_{n\geq0}\wedge M,L)$, for $n \geq 0$ and the differential operator $d_T : C^n_T(M,L) \to C^{n+1}_T(M,L)$ by
\begin{align}\label{coboumaurer}
    d_T(f)=l_1^T(f)=\frac{1}{2}l_3(T,T,f)+\frac{1}{6}l_4(T,T,T,f).
\end{align}
The corresponding cohomology groups  are
\begin{equation*}
  H^n_T(M,L)=\frac{Z^n_T(M,L)}{B^n_T(M,L)}=\frac{\{f \in C^n_T(M,L)|d_T(f)=0\}}{\{d_T(g)| g \in C^{n-1}_T(M,L)\}}.
\end{equation*}
\subsection{Yamaguti cohomology }

Let $T :M \to L $ be a generalized Reynolds operator on a \textsf{L.t.sRep} pair $ (L, [\cdot,\cdot,\cdot],\theta)$.  Once  a \textsf{L.t.s}  structure on the representation space $M$ is given,  we construct a representation of the representation space (viewed as a L.t.s ) on the \textsf{L.t.sRep} pair (viewed as a vector space) as follow:
Define
the linear map $\theta_T : \otimes^2 M \to End(L)$ by
\begin{equation}\label{inducedRep}
   \theta_T(u,v)x = [x,Tu,Tv]- T \Big( D(x,Tu)v- \theta(x,Tv)u +\mathcal{H} (x,Tu,Tv) \Big) \quad \forall u,v\in M ~  and ~ x\in L.
\end{equation}

\begin{pro} With the above  notations, $(L,\theta_T)$ is a representation  of the  L.t.s  $(M,[\cdot, \cdot, \cdot]_T).$
\end{pro}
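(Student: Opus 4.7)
The plan is to realise the action $\theta_T$ as coming from the twisted semidirect product $L \ltimes_\theta^{\mathcal{H}} M$, exploiting that the graph $\mathrm{Gr}(T) \subset L \ltimes_\theta^{\mathcal{H}} M$ is a subalgebra (Proposition~\ref{graph}) which, under the identification $u \leftrightarrow (Tu, u)$, carries exactly the induced bracket $[\cdot,\cdot,\cdot]_T$ (Corollary~\ref{coro}). Thus $\iota_T : (M, [\cdot,\cdot,\cdot]_T) \to L \ltimes_\theta^{\mathcal{H}} M$, $\iota_T(u) := (Tu, u) =: \tilde u$, is a morphism of L.t.s, while $\iota : L \hookrightarrow L \oplus M$, $\iota(x) = (x, 0)$, is linear.

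As vector spaces $L \oplus M = \iota(L) \oplus \mathrm{Gr}(T)$, so the projection $p : L \oplus M \to L$ with kernel $\mathrm{Gr}(T)$ is well defined and given by $p(x, u) = x - Tu$. A direct unwinding of the semidirect product bracket yields
$$
\theta_T(u, v)\,x \;=\; p\bigl([\iota(x),\, \tilde u,\, \tilde v]_{\mathcal{H}}\bigr), \qquad D_T(u, v)\,x \;=\; p\bigl([\tilde u,\, \tilde v,\, \iota(x)]_{\mathcal{H}}\bigr),
$$
for all $x \in L$ and $u, v \in M$, the second identity using the cyclic relation~\eqref{lts 1}.

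The key reduction is the following: because $\mathrm{Gr}(T)$ is a subalgebra, any ternary bracket in $L \ltimes_\theta^{\mathcal{H}} M$ that has an entry lying in $\mathrm{Gr}(T)$ stays inside $\mathrm{Gr}(T)$, hence has vanishing $p$-image. Consequently, for any $\xi \in L \oplus M$ and $\tilde u_1,\tilde u_2 \in \mathrm{Gr}(T)$, substituting $\iota(p(\xi))$ for $\xi$ in any slot of $[\xi, \tilde u_1, \tilde u_2]_{\mathcal{H}}$ leaves $p$ of the bracket unchanged. Iterating, every composition of $\theta_T$ and $D_T$ applied to $a \in L$ that appears in the representation axioms \eqref{rep lts 1} and \eqref{rep lts 2} can be rewritten as $p$ applied to a single iterated bracket in $L \ltimes_\theta^{\mathcal{H}} M$ whose entries are all either $\iota(a)$ or some $\tilde x_i$ (or a sub-bracket thereof, using $[\tilde x, \tilde y, \tilde z]_{\mathcal{H}} = \iota_T([x,y,z]_T)$).

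With this reduction, each of the two representation axioms for $\theta_T$ collapses to $p$ of a linear combination of iterated brackets in $L \ltimes_\theta^{\mathcal{H}} M$ that vanishes by one application of the derivation identity~\eqref{lts 2} in the semidirect product, together with skew-symmetry in the first two arguments. The only genuinely technical step is establishing the two projection identities described above; once those are in hand, no further explicit manipulation involving $\mathcal{H}$, $\theta$ or $D$ is needed, and the verification becomes a purely formal bracket computation inside the ambient L.t.s $L \ltimes_\theta^{\mathcal{H}} M$.
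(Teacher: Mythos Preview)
Your proposal is correct and takes a genuinely different route from the paper. The paper verifies the two axioms \eqref{rep lts 1} and \eqref{rep lts 2} for $\theta_T$ by brute-force expansion: every term is unrolled from the definitions of $\theta_T$, $D_T$ and $[\cdot,\cdot,\cdot]_T$, the representation axioms for $\theta$ and the L.t.s identities for $L$ are applied, and what survives is precisely $T\bigl(\delta^{3}\mathcal{H}(\cdots)\bigr)$, which vanishes because $\mathcal{H}$ is a $3$-cocycle. Your argument instead lifts the whole computation to the twisted semidirect product $L\ltimes_\theta^{\mathcal{H}}M$: once the projection identities $\theta_T(u,v)x=p\bigl([\iota(x),\tilde u,\tilde v]_{\mathcal{H}}\bigr)$ and $D_T(u,v)x=p\bigl([\tilde u,\tilde v,\iota(x)]_{\mathcal{H}}\bigr)$ are in place, the axioms for $\theta_T$ reduce to the corresponding axioms for the \emph{regular} representation $\mathfrak R$ of the ambient L.t.s, applied to $\iota(a)$ and then projected by $p$. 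This is cleaner, and the $3$-cocycle condition on $\mathcal{H}$ never appears explicitly --- it is absorbed into the fact that $L\ltimes_\theta^{\mathcal{H}}M$ is a L.t.s in the first place. The paper's computation, by contrast, makes visible exactly where the cocycle condition enters.

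One phrasing slip worth fixing: the sentence ``any ternary bracket \ldots\ that has an entry lying in $Gr(T)$ stays inside $Gr(T)$'' is false as written (a subalgebra is closed under brackets of three elements, not one). What you actually use --- and what is correct --- is that in your reduction the \emph{other} two slots already contain $\tilde u_i\in Gr(T)$, so replacing the remaining slot $\xi$ by $\xi-\iota(p(\xi))\in Gr(T)$ yields a bracket of three graph elements, hence in $Gr(T)=\ker p$. Also, ``one application of the derivation identity~\eqref{lts 2}'' is a slight understatement: deducing that $\mathfrak R$ satisfies \eqref{rep lts 1} from the L.t.s axioms uses both \eqref{lts 1} and \eqref{lts 2}; but this is the standard fact that the adjoint representation is a representation, so the point is sound.
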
\begin{proof}
Note that \begin{align*}
    D_T(u,v)x=&  \theta_T(v,u)x- \theta_T(u,v)x\\
    =&[Tu,Tv,x]- T(\theta(Tv,x))u -\theta(Tu,x))v+ \mathcal{H}(Tu,Tv,x))   \quad\forall u,v\in M ~  and ~ x\in L.
\end{align*}
According to
Eqs. \eqref{lts 1}-\eqref{rep lts 3}, \eqref{2.11}- \eqref{2.12} and   \eqref{O op on lts}-\eqref{ltsmod}, for all $u_i \in M, 1 \leq i \leq 4$ and   $x\in L$, we get
\begin{align*}
    &\theta_T(u_3,u_4) \theta_T(u_1,u_2) - \theta_T(u_2,u_4) \theta_T(u_1,u_3)-\theta_T(u_1,[u_2,u_3,u_4]_T) +D_T(u_2,u_3)\theta_T(u_1,u_4)   x\\
   =& [[x,Tu_1,Tu_2], Tu_3,Tu_4] + [Tu_2,[x,Tu_1,Tu_3], Tu_4]+ [Tu_2, Tu_3,[x,Tu_1,Tu_4]]  \\ & - [x,Tu_1,[Tu_2, Tu_3,Tu_4]] + T \Big (\theta(Tu_3,Tu_4)\theta(x,Tu_2)u_1-\theta(Tu_3,Tu_4)\mathcal{H}(x,Tu_1,Tu_2) \\
   &-\theta(Tu_3,Tu_4)D(x,Tu_1)u_2 + \theta(Tu_2,Tu_4)\mathcal{H}(x,Tu_1,Tu_3)  - \theta(Tu_2,Tu_4)\theta(x,Tu_3)u_1\\
   & + \theta(Tu_2,Tu_4)D(x,Tu_1)u_3 + D(x,Tu_1)\theta(Tu_3,Tu_4)u_2-D(x,Tu_1) \theta(Tu_2,Tu_4)u_3\\&+D(x,Tu_1)D(Tu_2,Tu_3)u_4+ D(x,Tu_1)\mathcal{H}(T u_2,T u_3,T u_4)    +\mathcal{H}(x,Tu_1,[Tu_2,Tu_3,Tu_4])\\
  & - \theta(x,[Tu_2,Tu_3,Tu_4])u_1 + D(Tu_3,Tu_2) \mathcal{H}(x,Tu_1,Tu_4) + D(Tu_2,Tu_3)\theta(x,Tu_4)u_1\\
  & + D(Tu_3,Tu_2)D(x,Tu_1)u_2- D([x,Tu_1,Tu_2],Tu_3)u_4+ D([x,Tu_1,Tu_2],Tu_4)u_3\\
  & +\theta([x,Tu_1,Tu_2],Tu_4)u_3-\theta([x,Tu_1,Tu_3],Tu_4)u_2  - D([x,Tu_1,Tu_4],Tu_3)u_2\\
  &  +\theta([x,Tu_1,Tu_4],Tu_2)u_3+ D([x,Tu_1,Tu_4],Tu_2)u_3- \theta([x,Tu_1,Tu_4],Tu_3)u_2\Big  )\\
  =& T \Big ( \mathcal{H}(x,Tu_1,[Tu_2,Tu_3,Tu_4])\\  &+D(x,Tu_1)\mathcal{H}(Tu_2,Tu_3,Tu_4  )- \mathcal{H}([x,Tu_1,Tu_2],Tu_3,Tu_4) \\
   & - \mathcal{H}(Tu_2,[x,Tu_1,Tu_3],Tu_4) - \mathcal{H}(Tu_2,Tu_3,[x,Tu_1,Tu_4]) - \theta(Tu_3,Tu_4) \mathcal{H}(x,Tu_1,Tu_4)\\& +\theta(Tu_2,Tu_4) \mathcal{H}(x,Tu_1,Tu_3)-
   D(Tu_2,Tu_3) \mathcal{H}(x,Tu_1,Tu_4) \Big)\\
  =& T (\delta^{3}\mathcal{H}(x,Tu_1,Tu_2,Tu_3,Tu_4)) =0.
\end{align*}
Similarly, we have
\begin{align*}
 &\theta_T(u_3,u_4) D_T(u_1,u_2) - D_T(u_1,u_2) \theta_T(u_3,u_4)+\theta_T([u_1,u_2,u_3]_T,u_4) +\theta_T(u_3,[u_1,u_2,u_4]_T )  x\\
  =& [[Tu_1,Tu_2,x], Tu_3,Tu_4]+[x,[Tu_1,Tu_2, Tu_3],Tu_4] + [x,Tu_3,[Tu_1,Tu_2, Tu_4]]\\ &- [Tu_1, Tu_2,[x,Tu_3,Tu_4]]    + T \Big (\theta(Tu_3,Tu_4)\theta(Tu_1,x)u_2- \theta(Tu_3,Tu_4)\theta(Tu_2,x)u_1 \\ &- \theta(Tu_3,Tu_4)\mathcal{H}(Tu_1,Tu_2,x)- \mathcal{H} ([Tu_1,Tu_2,x],Tu_3,Tu_4 ) + D (Tu_1,Tu_2)\mathcal{H}(x,Tu_3,Tu_4)\\ &-  D (Tu_1,Tu_2)\theta (x,Tu_4)u_3 +D (Tu_1,Tu_2)D (x,Tu_3)u_4 +\mathcal{H} (Tu_1,Tu_2,[x,Tu_3,Tu_4 ])\\ &+ \theta(x,Tu_4)\theta(Tu_2,Tu_3)u_1  -   \theta(x,Tu_4)\theta(Tu_1,Tu_3)u_2   + \theta(x,Tu_4)D(Tu_1,Tu_2)u_3\\ & +   \theta(x,Tu_4)\mathcal{H}(Tu_1,Tu_2,Tu_3) - D(x,[Tu_1,Tu_2,Tu_3])u_4   - \mathcal{H}(x,[Tu_1,Tu_2,Tu_3],Tu_4)\\ &
+ \theta(x,[Tu_1,Tu_2,Tu_4])u_3  - D(x,Tu_3) \theta(Tu_2,Tu_4)u_1 + D(x,Tu_3)  \theta(Tu_1,Tu_4)u_2\\ & - D(x,Tu_3)  D(Tu_1,Tu_2)u_4- D(x,Tu_3)  \mathcal{H}(Tu_1,Tu_2,Tu_1)
 - \mathcal{H}(x,Tu_3,[Tu_1,Tu_2,Tu_4]) \\ &- D([Tu_1,Tu_2,x],Tu_3)u_4 +
\theta([Tu_1,Tu_2,x],Tu_4)u_3  - \theta(Tu_2,[x,Tu_3,Tu_4])u_1 \\& - \theta(Tu_1,[x,Tu_3,Tu_4])u_2 \Big )\\
  =& T \Big ( \mathcal{H} (Tu_1,Tu_2,[x,Tu_3,Tu_4 ])- \mathcal{H} ([Tu_1,Tu_2,x],Tu_3,Tu_4 )+D(Tu_1,Tu_2)\mathcal{H} (x,Tu_3,Tu_4) \\ & -  \mathcal{H} (x,[Tu_1,Tu_2,Tu_3],Tu_4 ) -  \mathcal{H} (x,Tu_3,[Tu_1,Tu_2,Tu_4] )- \theta (Tu_3,Tu_4) \mathcal{H} (Tu_1,Tu_2,x)\\ &  + \theta(x,Tu_4) \mathcal{H}(Tu_1,Tu_2,Tu_3)- D(x,Tu_3) \mathcal{H}(Tu_1,Tu_2,Tu_4)\Big )\\
  =& T (\delta^{3}\mathcal{H}(Tu_1,Tu_2,x,Tu_3,Tu_4)) =0.
\end{align*}
Therefore, we deduce that $\theta_T$ is a representation  of the L.t.s $(M,[\cdot, \cdot, \cdot]_T)$ on $L$.
\end{proof}
It follows from the above Proposition that we may consider The Yamaguti cohomology of  L.t.s $(M,[\cdot,\cdot, \cdot]_T )$ with
coefficients in the representation $(L,\theta_T)$. More precisely, for each $n \geqslant 0$, we denote by $\mathfrak{C}^{2n+1}(M,L)$ the Yamaguti   $(2n+1)$-cochains of $M$ with coefficients in $L$, that a $(2n+1)$-cochain $\varphi\in \mathfrak{C}^{2n+1}(M,L)$ is a linear map of $\otimes^{2n+1}  M$ into $L$ satisfying
\begin{align*}
   & \varphi(v_1,v_2,\cdots,v_{2n-2},v,v,u)=0,\\
&\circlearrowleft_{u,v,w}\varphi(v_1, v_2, \cdots, v_{2n-2}, u, v, w)  = 0.\end{align*}
Define the corresponding coboundary operator  $\delta^{2n-1}_T : \mathfrak{C}^{2n-1}(M,L) \to \mathfrak{C}^{2n+1}(M,L)$ by
\begin{align*}
  &  \delta^{2n-1}_T \varphi(v_1,v_2, \cdots , v_{2n+1})\nonumber\\
    =&\theta_T(v_{2n},v_{2n+1})\varphi(v_1,v_2, \cdots , v_{2n-1})- \theta_T(v_{2n-1},v_{2n+1})\varphi(v_1,v_2, \cdots , v_{2n-2},v_{2n}) \nonumber \\
    &+ \sum_{k=1}^n (-1)^{n+k}D_T(v_{2k-1},v_{2k})\varphi(v_1,v_2, \cdots , \widehat{v}_{2k-1},\widehat{v}_{2k}, \cdots , v_{2n+1}) \nonumber \\
    &+ \sum_{k=1}^n \sum_{j=2k+1}^{2n+1}  (-1)^{n+k+1} \varphi(v_1,v_2, \cdots,  \widehat{v}_{2k-1},\widehat{v}_{2k}, \cdots, [v_{2k-1},v_{2k},v_j]_T,\cdots , v_{2n+1})\\
    =& [\varphi(v_1,v_2, \cdots , v_{2n-1}),Tv_{2n},Tv_{2n+1} ] - TD(\varphi(v_1,v_2, \cdots , v_{2n-1}),Tv_{2n}) v_{2n+1}\\
    &+T\theta(\varphi(v_1,v_2, \cdots , v_{2n-1}),Tv_{2n+1}) v_{2n}- T \mathcal{H}(\varphi(v_1,v_2, \cdots , v_{2n-1}),Tv_{2n},Tv_{2n+1})\\
    &-  [\varphi(v_1,v_2, \cdots , v_{2n-2},v_{2n}),Tv_{2n-1},Tv_{2n+1} ]+ TD(\varphi(v_1, \cdots , v_{2n-2},v_{2n}),Tv_{2n-1}) v_{2n+1}\\
    &-T\theta(\varphi(v_1, \cdots , v_{2n-2},v_{2n}),Tv_{2n+1}) v_{2n-1}+T \mathcal{H}(\varphi(v_1, \cdots , v_{2n-2},v_{2n}),Tv_{2n-1},Tv_{2n+1})\\
    &+\sum_{k=1}^n (-1)^{n+k}\Big([T(v_{2k-1},Tv_{2k},\varphi(v_1, \cdots , \widehat{v}_{2k-1},\widehat{v}_{2k}, \cdots , v_{2n+1})]\\&\quad - T\theta(Tv_{2k},\varphi(v_1, \cdots , \widehat{v}_{2k-1},\widehat{v}_{2k}, \cdots , v_{2n+1})v_{2k-1}+ T\theta(Tv_{2k-1},\varphi(v_1, \cdots , \widehat{v}_{2k-1},\widehat{v}_{2k}, \cdots , v_{2n+1}) v_{2k}\\
   &\quad - T \mathcal{H}(Tv_{2k-1},Tv_{2k},\varphi(v_1, \cdots , \widehat{v}_{2k-1},\widehat{v}_{2k}, \cdots , v_{2n+1}))\Big)\\
   &+ \sum_{k=1}^n \sum_{j=2k+1}^{2n+1}  (-1)^{n+k+1}\Big( \varphi(v_1, \cdots,  \widehat{v}_{2k-1},\widehat{v}_{2k}, \cdots, \theta (Tv_{2k},Tv_{j})v_{2k-1}-\theta (Tv_{2k-1},Tv_{j})v_{2k},\cdots , v_{2n+1})\\
   &\quad +\varphi(v_1,v_2, \cdots,  \widehat{v}_{2k-1},\widehat{v}_{2k}, \cdots, D(Tv_{2k-1},Tv_{2k}) v_{j}+ \mathcal{H}(Tv_{2k-1},Tv_{2k},Tv_{j} ),\cdots , v_{2n+1})\Big).
\end{align*}
With this coboundary operator the Yamaguti cochain forms a complex
$$ \mathfrak{C}^{1}(M,L)\overset{\delta_T^{1}}{\longrightarrow} \mathfrak{C}^{3}(M,L)\overset{\delta_T^{3}}{\longrightarrow}  \mathfrak{C}^{5}(M,L) \longrightarrow \cdots,$$
such that $\delta_T^{2n+1}\circ \delta_T^{2n-1}=0$ for all  $n \geq 1 $. In particular,
a $1$-cochain $\varphi \in\mathfrak{C}^{1}(M, L )$
is  $1$-cocycle if

\begin{align}
    &  T \mathcal{H}(\varphi(v_1),Tv_{2},Tv_{3})\nonumber\\
    =&[\varphi(v_1),Tv_2,Tv_3 ]+[Tv_1,\varphi(v_2),Tv_3 ]+[Tv_1,Tv_2,\varphi(v_3) ] + T\theta(\varphi(v_1),Tv_3)v_2\nonumber\\
     &- TD(\varphi(v_1), Tv_2)v_3 + TD(\varphi(v_2), Tv_1)v_3- T\theta(\varphi(v_2),Tv_3)v_1+  T \mathcal{H}(\varphi(v_2),Tv_{1},Tv_{3})\nonumber
     \\
     &- TD(\varphi(v_1), Tv_2)v_3 - T\theta(Tv_2,\varphi(v_3))v_1+ T\theta(Tv_1,\varphi(v_3))v_2-  T \mathcal{H}(Tv_{1},Tv_{2},\varphi(v_3))\nonumber\\
     &-\varphi(\theta(Tv_2,Tv_3)v_1)-\varphi(\theta(Tv_1,Tv_3)v_2)+\varphi(D(Tv_1,Tv_2)v_3)+\varphi(\mathcal{ H}(Tv_1,Tv_2,Tv_3).\label{1.cocycle}
\end{align}

For all $\chi =(x_1,x_2) \in L  \otimes L $, we define $\partial_T(\chi) :M \rightarrow L$ by 
 \begin{equation}
    \partial_T(\chi)u=T\Big(D(\chi)u + \mathcal{H}(\chi,Tu)\Big)-[\chi,Tu]  \quad \forall  u\in M.
 \end{equation}
\begin{pro}
 Let $T$ be a generalized Reynolds operator on a \textsf{L.t.sRep} pair
$(L, [\cdot,\cdot, \cdot],\theta)$. Then $\partial_T(\chi)$ is a $1$-cocycle on the L.t.s $(M,[\c,\c,\c]_T)$ with coefficients in $(L,\theta_T)$.
\end{pro}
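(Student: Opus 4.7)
The plan is to verify directly that $\varphi := \partial_T(\chi)$ satisfies the $1$-cocycle identity \eqref{1.cocycle} for the induced structure on $(M,[\cdot,\cdot,\cdot]_T)$ with coefficients in $(L,\theta_T)$. Writing $\chi=(x_1,x_2)$, I would first unfold the three quantities $\varphi(v_1)$, $\varphi(v_2)$, $\varphi(v_3)$ on the left-hand side of \eqref{1.cocycle} by replacing each $\varphi(v_i)$ by $T(D(\chi)v_i+\mathcal{H}(\chi,Tv_i))-[\chi,Tv_i]$, and similarly expand the four terms of the form $\varphi(\theta(Tv_i,Tv_j)v_k)$, $\varphi(D(Tv_1,Tv_2)v_3)$ and $\varphi(\mathcal{H}(Tv_1,Tv_2,Tv_3))$ on the right-hand side.

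Next, I would group the resulting terms into three families: (i) the terms that are $[\chi,\text{something}]$ or triple brackets in $L$, (ii) the terms inside the outer $T$ involving $D$ and $\theta$, and (iii) the terms inside the outer $T$ involving $\mathcal{H}$. For family (i), the sum reduces to brackets of the form $[x_1,x_2,[Tv_1,Tv_2,Tv_3]]$ and its permutations, which cancel by the fundamental identity \eqref{lts 2} applied to the derivation $\mathfrak D(\chi)$ together with Corollary \ref{coro} (i.e.\ $T$ is a morphism of L.t.s.). For family (ii), the representation axioms \eqref{rep lts 1}, \eqref{rep lts 2} and the induced identity \eqref{rep lts 3} are precisely what is needed to collapse the $\theta$– and $D$–terms into $T\big(D(\chi)\bullet\big)$ of the bracket $[v_1,v_2,v_3]_T$ in $M$, after pulling everything through $T$ with the help of the generalized Reynolds relation \eqref{O op on lts}.

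The genuinely delicate part is family (iii): the $\mathcal{H}$-terms are what make $\partial_T(\chi)$ different from the obvious coboundary of an ordinary $\mathcal{O}$-operator. Here I would apply the $3$-cocycle condition \eqref{2.12} to $\mathcal{H}$ with arguments $(x_1,x_2,Tv_1,Tv_2,Tv_3)$; this identity rearranges $\mathcal{H}(x_1,x_2,[Tv_1,Tv_2,Tv_3])$ and $D(x_1,x_2)\mathcal{H}(Tv_1,Tv_2,Tv_3)$ into precisely the combination of $\mathcal{H}([x_1,x_2,Tv_i],\cdot,\cdot)$, $\theta(Tv_i,Tv_j)\mathcal{H}(x_1,x_2,Tv_k)$ and $D(Tv_i,Tv_j)\mathcal{H}(x_1,x_2,Tv_k)$ that appears on the other side of the equation after passing $T$ inside. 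The skew-symmetry and cyclic relations \eqref{SkewCochain}--\eqref{2.11} handle the symmetry bookkeeping.

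The main obstacle is purely the combinatorial bookkeeping: one must carefully align the signs, shuffle terms so that the $3$-cocycle identity can be recognized verbatim, and use \eqref{O op on lts} at each place where an expression of the form $T(D\cdot+\theta\cdot-\theta\cdot+\mathcal{H}\cdot)$ appears to convert it back into a triple bracket $[T\cdot,T\cdot,T\cdot]$ in $L$. Once every term inside $T$ is collected and recognized as $\delta^3\mathcal{H}$ evaluated on $(x_1,x_2,Tv_1,Tv_2,Tv_3)$, it vanishes because $\mathcal{H}$ is a $3$-cocycle, and the residual brackets outside $T$ cancel by the derivation property of $\mathfrak D(\chi)$. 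This yields $\delta^1_T(\partial_T(\chi))=0$, as desired.
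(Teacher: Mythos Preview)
Your proposal is correct and follows essentially the same route as the paper's proof: a direct expansion of $\delta^1_T(\partial_T(\chi))(u_1,u_2,u_3)$, followed by cancellation of the bracket terms via the fundamental identity \eqref{lts 2} and the generalized Reynolds relation \eqref{O op on lts}, elimination of the $\mathcal{H}$-terms via the $3$-cocycle condition \eqref{2.12}, and cancellation of the remaining $\theta$/$D$-terms via the representation axioms \eqref{rep lts 2}--\eqref{rep lts 3}. Your explicit grouping into three families and the recognition of the $\mathcal{H}$-contribution as an instance of $\delta^3\mathcal{H}=0$ is a helpful organizational device, but the underlying computation and the identities invoked coincide with those in the paper.
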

\begin{proof}
 For any $u_1,u_2,u_3\in M$, we have:{\small
 \begin{align*}
     &\delta^{1}_T\circ\partial_T(\chi)(u_1,u_2, u_{3}) \\
   =  & [TD(\chi)u_1,Tu_2,Tu_3]+  [Tu_1,TD(\chi)u_2,Tu_3]+  [Tu_1,Tu_2,TD(\chi)u_3]+   [T\mathcal{H}(\chi,Tu_1),Tu_2,Tu_3]\\
     &+[Tu_1,T\mathcal{H}(\chi,Tu_2),Tu_3]+[Tu_1,Tu_2,T\mathcal{H}(\chi,Tu_3)]- [[\chi,Tu_1],Tu_2,Tu_3]
     \\& - [Tu_1,[\chi,Tu_2],Tu_3]-  [Tu_1,Tu_2,[\chi,Tu_3]]+[\chi,[Tu_1,Tu_2,Tu_3]]
     \\&- T\Big( D(TD(\chi)u_1,Tu_2)u_3+D(T\mathcal{H}(\chi)u_1,Tu_2)u_3 - D([\chi,Tu_1],Tu_2)u_3\\
     &-  \theta(TD(\chi)u_1,Tu_3)u_2-\theta (T\mathcal{H}(\chi)u_1,Tu_3)u_2 + \theta([\chi,Tu_1],Tu_3)u_2\\
      &+  \mathcal{H}(TD(\chi)u_1,Tu_2,Tu_3)+\mathcal{H} (T\mathcal{H}(\chi,Tu_1),Tu_2,Tu_3) - \mathcal{H}([\chi,Tu_1],Tu_2,Tu_3)\\
      &-  D(TD(\chi)u_2,Tu_1)u_3-D (T\mathcal{H}(\chi,Tu_2),Tu_1)u_3 +D ([\chi,Tu_2],Tu_1)u_3\\
      &+ \theta(TD(\chi)u_2,Tu_3)u_1+\theta (T\mathcal{H}(\chi,Tu_2),Tu_3)u_1 -\theta ([\chi,Tu_2],Tu_3)u_1\\
       &+\mathcal{H}(Tu_1,TD(\chi)u_2,Tu_3)-\mathcal{H} (Tu_1,T\mathcal{H}(\chi,Tu_2),Tu_1) +\mathcal{H} (Tu_1,[\chi,Tu_2],Tu_3)\\
        &+ \theta(Tu_2,TD(\chi)u_3)u_1+\theta (Tu_2,T\mathcal{H}(\chi,Tu_3))u_1 -\theta (Tu_2,[\chi,Tu_3])u_1\\
         &- \theta(Tu_1,TD(\chi)u_3)u_2-\theta (Tu_1,T\mathcal{H}(\chi,Tu_3))u_2 +\theta (Tu_1,[\chi,Tu_3])u_2\\
         &+\mathcal{H} (Tu_1,Tu_2,TD(\chi)u_3 )+ \mathcal{H}(Tu_1,Tu_2,T\mathcal{H}(\chi,Tu_3))-\mathcal{H} (Tu_1,Tu_2,[\chi,Tu_3])\Big)\\
         &-TD(\chi)(\theta(Tu_2,Tu_3 )u_1 - \theta(Tu_1,Tu_3 )u_2+D(Tu_1,Tu_2 )u_3 + \mathcal{H}(Tu_1,Tu_2,Tu_3))\\
         &-\mathcal{H}(\chi,[Tu_1,Tu_2,Tu_3])+ [\chi,[Tu_1,Tu_2,Tu_3]]\\
          \overset{\eqref{lts 2}\eqref{O op on lts}}{=}& T\Big( \theta(Tu_2,Tu_3 )D(\chi)u_1 - \theta(Tu_1,Tu_3 )D(\chi)u_2+ D(Tu_1,Tu_2 )D(\chi)u_3\\
          & - \theta(Tu_1,Tu_3 )\mathcal{H}(\chi,T u_2)+D(Tu_1,Tu_2 )\mathcal{H}(\chi,\mathcal T u_3)+ D([\chi,T u_1],T u_2)u_3-
          \theta([\chi,T u_1],T u_3)u_2\\
          & + \mathcal{H}([\chi,T u_1],T u_2,T u_3)- D([\chi,T u_2],T u_1)u_3+\theta([\chi,T u_2],T u_2)u_1 + \mathcal{H}(T u_1,[\chi,T u_2],T u_3)\\&+ \theta(T u_2,[\chi,T u_3],)u_1- \theta(T u_1,[\chi,T u_3],)u_2+ \mathcal{H}(T u_1,T u_2,[\chi,T u_3])-
           \mathcal{H}(\chi,[T u_1,T u_2,T u_3])\\
          & -D(\chi)\theta(Tu_2,Tu_3 )u_1 + D(\chi)\theta(Tu_1,Tu_3 )u_2-D(\chi)D(Tu_1,Tu_2 )u_3 -D(\chi) \mathcal{H}(Tu_1,Tu_2,Tu_3)) \Big)\\ \overset{\eqref{2.12}}{=}&
          T\Big( (\theta(Tu_2,Tu_3 )D(\chi) + \theta(T u_2,[\chi,T u_3])+\theta([\chi,T u_2],T u_2)-D(\chi)\theta(Tu_2,Tu_3 ))u_1\\&- (\theta(Tu_1,Tu_3 )D(\chi)-
          \theta([\chi,T u_1],T u_3)- \theta(T u_1,[\chi,T u_3])+ D(\chi)\theta(Tu_1,Tu_3 ))u_2\\&
          +( D(Tu_1,Tu_2 )D(\chi)
          + D([\chi,T u_1],T u_2)
          - D([\chi,T u_2],T u_1)
          -D(\chi)D(Tu_1,Tu_2 ))u_3 \Big)\\
          \overset{\eqref{rep lts 2}\eqref{rep lts 3}}{=}&0.
         \end{align*}}
         Thus, we deduce that $\partial_T(\chi)$ is a $1$-cocycle.
\end{proof}

Define the set of $(2n-1)$-cochains by 
 \begin{equation}
     \mathcal{C}_{T}^{2n-1}(M,L)=\begin{cases}

     L\otimes L \quad \quad \quad\;\; \; \ if \ n=0,\\\mathfrak{C}^{2n-1}(M,L)\quad if \  n\geq 1.
     \end{cases}
 \end{equation}
 Define $\Lambda_ T :  \mathcal{C}_{ T}^{2n-1}(M,L) \to  \mathcal{C}_{ T}^{2n+1}(M,L)$ by 
 \begin{equation}
     \Lambda_T=\begin{cases}

     \partial_ T \;\;\;\ \ if \ n=0,\\\delta_ T \quad \ if \ n\geq 1.
     \end{cases}
 \end{equation}
Now we give the cohomology of generalized Reynolds operator  on a  \textsf{L.t.sRep} pair   $(L, [\cdot,\cdot, \cdot],\theta)$.
\begin{defi}
 Let $T$ be a generalized Reynolds operator on a \textsf{L.t.sRep} pair
$(L, [\cdot,\cdot, \cdot],\theta)$.
 Denote the set of cocycles by ${\mathcal{Z}}^{\ast}(M,L)$, the set of coboundaries by ${\mathcal{B}}^{\ast}(M,L)$
 and the cohomology group by
 $${\mathbf{H}}_T^{\ast}(M,L)={\mathcal{Z}}^{\ast}(M,L) / {\mathcal{B}}^{\ast}(M,L).$$
 The cohomology groups correspond  to  the  cohomology groups for the generalized Reynolds operator $T$.
 \end{defi}
The coboundary operator $\Lambda_T$ coincides with the differential $d_T$ defined by
\eqref{coboumaurer} using the Maurer-Cartan element $T$   of the  $L_\infty$-algebra $(C^\ast(M,L),l_3,l_4)$.
\begin{thm}
 Let $T$ be a generalized Reynolds operator  on a \textsf{L.t.sRep} pair
$(L, [\cdot,\cdot, \cdot],\theta)$. Then we have
\begin{equation}
   d_T(f)= (-1)^{n-1}\Lambda_T(f)\quad \forall f\in Hom (\underbrace{M\otimes \cdots\otimes M}_{2n-1}, L),\;n\geq1.
\end{equation}
\end{thm}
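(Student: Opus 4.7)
The plan is to prove the identity by a direct expansion of both $d_T(f)$ and $\Lambda_T(f)=\delta_T^{2n-1}(f)$ and match the two expressions term by term. Starting from the right-hand side, recall from \eqref{coboumaurer} that
\[
d_T(f)=\tfrac{1}{2}\,l_3(T,T,f)+\tfrac{1}{6}\,l_4(T,T,T,f),
\]
where $l_3(T,T,f)=[[[\pi+\theta,T]_{L.t.s},T]_{L.t.s},f]_{L.t.s}$ and similarly $l_4(T,T,T,f)$ involves the $\mathcal{H}$-nested bracket. On the other side, $\delta_T^{2n-1}(f)$ is the Yamaguti coboundary of $f$ viewed as a cochain on $(M,[\cdot,\cdot,\cdot]_T)$ with coefficients in the induced representation $(L,\theta_T)$, and so it naturally splits into four families of terms: those involving $\theta_T$, those involving $D_T$, those coming from the insertion of $[\cdot,\cdot,\cdot]_T$ inside $f$, and (implicitly, via the formula for $\theta_T$ and $[\cdot,\cdot,\cdot]_T$) the $\mathcal{H}$-contribution.

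The plan for the computation is as follows. First, I would expand $l_3(T,T,f)$ using \eqref{L.t.s-bracket}, noting that this triple bracket of graded elements amounts to a sum over shuffles that places $T,T$ either in the outer slots of $\pi+\theta$ or lifts the symbol $f$ into the composition; the combinatorial prefactor $\tfrac{1}{2}$ precisely absorbs the symmetry between the two $T$'s. Matching slot by slot, the $\pi$-contribution produces the terms $[f(\ldots),Tv_{2n},Tv_{2n+1}]$ and $[Tv_{2k-1},Tv_{2k},f(\ldots)]$ appearing in $\delta_T^{2n-1}(f)$ through the definitions of $\theta_T$ and $D_T$ given in \eqref{inducedRep}, while the $\theta$-contribution produces the compositions $\theta(Tv_i,Tv_j)$ occurring after the inner $f$ is inserted into $[\cdot,\cdot,\cdot]_T$. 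Second, I would expand $l_4(T,T,T,f)$, whose prefactor $\tfrac{1}{6}$ matches the symmetry among the three $T$'s, and show that all resulting summands reduce to the $\mathcal{H}$-type contributions: terms of the form $T\mathcal{H}(f(\ldots),Tv_i,Tv_j)$ from the outer slots and terms $f(\ldots,\mathcal{H}(Tv_{2k-1},Tv_{2k},Tv_j),\ldots)$ from the inner insertion.

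The main obstacle is the combinatorial bookkeeping: one has to track carefully the signs coming from the degree conventions in \eqref{L.t.s-bracket} (in particular the factor $(-1)^{(k-1)q}$), the shuffle sums, and the overall $(-1)^{n-1}$ that converts the Maurer-Cartan convention (where $d_T$ raises degree by one within the graded vector space $C^*(M,L)$ with degree $n$ assigned to $\mathfrak{C}^{2n+1}$) to the Yamaguti convention (where the coboundary goes $\mathfrak{C}^{2n-1}\to\mathfrak{C}^{2n+1}$). It is crucial that every outer-slot contribution becomes an honest term of $\delta_T^{2n-1}$ only after a $T$ in the composition is absorbed into the defining identities \eqref{inducedRep} for $\theta_T$ and the analogous identity for $D_T$, which turn $T\circ D(\cdot,\cdot)+T\circ \mathcal{H}(\cdot,\cdot,\cdot)-[\cdot,\cdot,\cdot]$ expressions into $\theta_T$ and $D_T$; the generalized Reynolds relation \eqref{O op on lts} is therefore silently used to collapse the $l_3+l_4$ combination to a single coboundary.

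To make the matching clean I would first verify the statement in the low-degree case $n=1$, where $f\in\mathfrak{C}^1(M,L)$. In this case $d_T(f)$ is a trilinear map on $M$, and using \eqref{eqproof1}--\eqref{eqproof2} adapted with $f$ replacing one copy of $T$, direct expansion reproduces exactly the nine groups of terms on the right-hand side of \eqref{1.cocycle}, so $d_T(f)=\delta_T^{1}(f)$ (the sign is $+1$ as $(-1)^{n-1}=1$). The general $n$ case follows by the same pattern: each Maurer-Cartan evaluation unfolds into the Yamaguti coboundary formula, because the twist of the graded Lie algebra $(C^*(L\oplus M,L\oplus M),[\cdot,\cdot]_{L.t.s})$ by the Maurer-Cartan element $\pi+\theta+\mathcal{H}+T$ encodes exactly the L.t.s structure $[\cdot,\cdot,\cdot]_T$ together with the representation $\theta_T$ on $L$. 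Hence $d_T$ and $(-1)^{n-1}\Lambda_T$ agree on each $\mathfrak{C}^{2n-1}(M,L)$, which proves the theorem.
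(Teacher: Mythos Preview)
Your plan is essentially the paper's own proof: expand $\tfrac{1}{2}l_3(T,T,f)$ and $\tfrac{1}{6}l_4(T,T,T,f)$ via the graded bracket \eqref{L.t.s-bracket} and recognise the result as the Yamaguti coboundary $\delta_T^{2n-1}f$; the paper merely shortens the work by quoting the $l_3$ expansion from \cite{O-operator} and then computes the $l_4$ part explicitly. One inaccuracy worth flagging: the generalized Reynolds identity \eqref{O op on lts} is \emph{not} used anywhere in this argument. The equality $d_T(f)=(-1)^{n-1}\delta_T^{2n-1}(f)$ is a purely formal identity in the graded Lie algebra on $L\oplus M$: the $l_3$ contributions supply the $\pi$- and $\theta$-pieces of $\theta_T$, $D_T$ and $[\cdot,\cdot,\cdot]_T$, the $l_4$ contributions supply exactly the $\mathcal{H}$-pieces, and these assemble into \eqref{inducedRep} and \eqref{ltsmod} by definition, with no relation on $T$ required (the Reynolds condition is what makes $d_T$ and $\delta_T$ square to zero, not what makes them coincide).
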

\begin{proof}
    In \cite{{O-operator}}, the authors showed that
   \begin{align*}
&\frac{1}{2}l_3(T,T,f)(\mathfrak{U}_1,\cdots,\mathfrak{U}_{n},u_{n+1})\\
=&[[[\pi+\theta,T]_{L.t.s},T]_{L.t.s},f]_{L.t.s}(\mathfrak{U}_1,\cdots,\mathfrak{U}_{n},u_{n+1})\\
=&(-1)^{n-1}[[\pi+\theta,T]_{L.t.s},T]_{L.t.s}\Big(f(\mathfrak{U}_1,\cdots,\mathfrak{U}_{n-1},u_n),v_n,u_{n+1}\Big)\\
&+(-1)^{n-1}[[\pi+\theta,T]_{L.t.s},T]_{L.t.s}\Big(u_n,f(\mathfrak{U}_1,\cdots,\mathfrak{U}_{n-1},v_n),u_{n+1}\Big)\\
&+(-1)^{n-1}\sum_{i=1}^n(-1)^{n-1}(-1)^{i-1}[[\pi+\theta,T]_{L.t.s},T]_{L.t.s}\Big(\mathfrak{U}_i,f(\mathfrak{U}_1\cdots,\widehat{\mathfrak{U}_i},\cdots,\mathfrak{U}_{n},u_{n+1})\Big)\\
&-\sum_{k=1}^{n-1}\sum_{i=1}^k(-1)^{i+1}f\Big(\mathfrak{U}_1\cdots,\widehat{\mathfrak{U}_i},\cdots,\mathfrak{U}_{k},[[\pi+\theta,T]_{L.t.s},T]_{L.t.s}(\mathfrak{U}_i,u_{k+1}),v_{k+1},\mathfrak{U}_{k+2},\cdots,\mathfrak{U}_n,u_{n+1}\Big)\\
&-\sum_{k=1}^{n-1}\sum_{i=1}^k(-1)^{i+1}f\Big(\mathfrak{U}_1\cdots,\widehat{\mathfrak{U}_i},\cdots,\mathfrak{U}_{k},u_{k+1},[[\pi+\theta,T]_{L.t.s},T]_{L.t.s}(\mathfrak{U}_i,v_{k+1}),\mathfrak{U}_{k+2},\cdots,\mathfrak{U}_n,u_{n+1}\Big)\\
&-\sum_{i=1}^{n}f\Big(\mathfrak{U}_1\cdots,\widehat{\mathfrak{U}_i},\cdots,\mathfrak{U}_{n},[[\pi+\theta,T]_{L.t.s},T]_{L.t.s}(\mathfrak{U}_i,u_{n+1})\Big).
\end{align*}
 Now,  we add one argument and show that 
 \begin{align*}
     &l_4(T,T,T,f)(\mathfrak{U}_1,\cdots,\mathfrak{U}_{n},u_{n+1})\\
     =&[\mathcal{H},T]_{L.t.s},T]_{L.t.s},T]_{L.t.s},f]_{L.t.s}(\mathfrak{U}_1,\cdots,\mathfrak{U}_{n},u_{n+1})\\
     =&[\mathcal{H},T]_{L.t.s},T]_{L.t.s},T]_{L.t.s}(f(\mathfrak{U}_1,\cdots,\mathfrak{U}_{n-1},u_{n}), v_{n},u_{n+1})\\
     &+[\mathcal{H},T]_{L.t.s},T]_{L.t.s},T]_{L.t.s}(u_{n}, f(\mathfrak{U}_1,\cdots,\mathfrak{U}_{n-1},v_{n}),u_{n+1})\\
     &+\sum_{i=1}^{n}(-1)^{n-1}(-1)^{i-1}[\mathcal{H},T]_{L.t.s},T]_{L.t.s},T]_{L.t.s}\Big(\mathfrak{U}_{i},f(\mathfrak{U}_1,\cdots,\widehat{\mathfrak{U}_{i}},\cdots,\mathfrak{U}_{n},u_{n+1})\Big)\\
     &-(-1)^{n-1}\sum_{k=1}^{n-1}\sum_{i=1}^{k}(-1)^{i+1}
     f\Big(\mathfrak{U}_1,\cdots,\widehat{\mathfrak{U_i}},\cdots,\mathfrak{U}_k,[\mathcal{H},T]_{L.t.s},T]_{L.t.s},T]_{L.t.s}(\mathfrak{U}_i,u_{k+1}), v_{k+1},\mathfrak{U}_{k+2},\cdots,\mathfrak{U}_n,u_{n+1}\Big)\\
     &-(-1)^{n-1}\sum_{k=1}^{n-1}\sum_{i=1}^{k}(-1)^{i+1}
     f\Big(\mathfrak{U}_1,\cdots,\widehat{\mathfrak{U_i}},\cdots,\mathfrak{U}_k,u_{k+1},[\mathcal{H},T]_{L.t.s},T]_{L.t.s},T]_{L.t.s}(\mathfrak{U}_i,v_{k+1}),\mathfrak{U}_{k+2},\cdots,\mathfrak{U}_n,u_{n+1}\Big)\\
     &-(-1)^{n-1}\sum_{i=1}^{n}(-1)^{i+1}f\Big(\mathfrak{U}_1,\cdots,\widehat{\mathfrak{U_i}},\cdots,\mathfrak{U}_n,[\mathcal{H},T]_{L.t.s},T]_{L.t.s},T]_{L.t.s}(\mathfrak{U}_i,u_{n+1})\Big)\\
     =&(-1)^{n-1}6\Bigg\{(-1)^{n+1}\Big(
-T\mathcal{H}(f(\mathfrak{U}_1,\cdots,\mathfrak{U}_{n-1},u_{n}),Tv_{n},Tu_{n+1})
-T\mathcal{H}(f(\mathfrak{U}_1,\cdots,\mathfrak{U}_{n-1},v_{n}),Tu_{n+1},Tu_{n})\Big)\nonumber\\
&-\sum_{j=1}^{n}(-1)^{j+1}
T\mathcal{H}(f(\mathfrak{U}_1,\cdots,\widehat{\mathfrak{U}_{j}},\cdots,\mathfrak{U}_{n},u_{n+1}),Tu_{j},T_{j})\nonumber\\
&+\sum_{j=1}^{n}(-1)^{j}f\Big(\mathfrak{U}_1,\cdots,\widehat{\mathfrak{U}_{j}},\cdots,\mathfrak{U}_n,\mathcal{H}(Tu_{j},Tv_{j},Tu_{n+1})\Big)\\
&+\sum_{1\leq j < k \leq n}(-1)^{j}
f\Big(\mathfrak{U}_1,\cdots,\widehat{\mathfrak{U}_j},\cdots,\mathfrak{U}_{k-1},\mathcal{H}(Tu_j,Tv_j,Tu_k), v_k+u_k, \mathcal{H}(Tu_j,Tv_j,Tv_k),\mathfrak{U}_{k+1},\cdots,\mathfrak{U}_n,u_{n+1}\Big)\Bigg\}.
 \end{align*}
 Hence $d_T(f)=\frac{1}{2}l_3(T,T,f)+\frac{1}{6}l_4(T,T,T,f)=(-1)^{n-1} \partial_ T (f)$. The proof is finished.
\end{proof}
 \begin{pro} Let $T$ and $T^{'}$ be two generalized Reynolds operators on a \textsf{L.t.sRep} pair
$(L, [\cdot,\cdot, \cdot],\theta)$ and $(\phi,\psi)$ be a morphism from $T$ to $T^{'}$. Then  
\begin{itemize}
    \item $\psi$ is a L.t.s morphism from  $(M, [\c,\c, \c]_T)$
to  $(M, [\c,\c, \c]_{T^{'}})$.
\item The following diagram commute for all $u,v\in M$,

$$
 \xymatrix{
  L \ar[d]_{\theta_{T}(u,v)} \ar[r]^{\phi}
                & L \ar[d]^{\theta_{T^{'}}(\psi(u),\psi(v))}  \\
    L \ar[r]_{\phi}
                & L            }
                $$
where  $(L,\theta_T)$ is the induced representation  of the L.t.s $(M,[\c,\c,\c]_T)$ and  $(L,\theta_{T^{'}})$ is the induced representation of the L.t.s $(M,[\c,\c,\c]_T')$.
\end{itemize}
 \end{pro}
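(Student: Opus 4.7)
The plan is to prove both assertions by direct substitution, using only the three compatibilities \eqref{c1}--\eqref{c3} in Definition \ref{Def}, the fact that $\phi$ is a L.t.s morphism of $(L,[\cdot,\cdot,\cdot])$, and the explicit formulas \eqref{ltsmod} and \eqref{inducedRep} that define $[\cdot,\cdot,\cdot]_T$ and $\theta_T$. No cohomological input is needed.

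As a preliminary I would record one consequence of \eqref{c2}. Since by definition $D(x,y)=\theta(y,x)-\theta(x,y)$, applying \eqref{c2} to each summand gives
$$\psi(D(x,y)u)=D(\phi(x),\phi(y))\psi(u)\qquad\forall\,x,y\in L,\ u\in M.$$
Combined with \eqref{c1}, which allows the replacement $\phi(Tx)=T'\psi(x)$ wherever it occurs, and \eqref{c3}, these are the only facts used in the rest of the argument.

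For the first item, I would apply $\psi$ to the defining expression
$$[u,v,w]_T=\theta(Tv,Tw)u-\theta(Tu,Tw)v+D(Tu,Tv)w+\mathcal{H}(Tu,Tv,Tw),$$
and commute $\psi$ with each of the four terms using, respectively, \eqref{c2}, \eqref{c2}, the displayed $D$-identity above, and \eqref{c3}. Replacing each $\phi(Tx)$ by $T'\psi(x)$ via \eqref{c1} then produces exactly
$$\theta(T'\psi(v),T'\psi(w))\psi(u)-\theta(T'\psi(u),T'\psi(w))\psi(v)+D(T'\psi(u),T'\psi(v))\psi(w)+\mathcal{H}(T'\psi(u),T'\psi(v),T'\psi(w)),$$
which is $[\psi(u),\psi(v),\psi(w)]_{T'}$ by Corollary \ref{coro} applied to $T'$.

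For the second item, I would apply $\phi$ to
$$\theta_T(u,v)x=[x,Tu,Tv]-T\bigl(D(x,Tu)v-\theta(x,Tv)u+\mathcal{H}(x,Tu,Tv)\bigr).$$
The first term becomes $[\phi(x),\phi(Tu),\phi(Tv)]=[\phi(x),T'\psi(u),T'\psi(v)]$ because $\phi$ is a L.t.s morphism and by \eqref{c1}. For the second term, \eqref{c1} turns $\phi\circ T$ into $T'\circ\psi$, and then the three $\psi$-identities just recalled convert $D(x,Tu)v$, $\theta(x,Tv)u$ and $\mathcal{H}(x,Tu,Tv)$ into $D(\phi(x),T'\psi(u))\psi(v)$, $\theta(\phi(x),T'\psi(v))\psi(u)$ and $\mathcal{H}(\phi(x),T'\psi(u),T'\psi(v))$ respectively. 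Comparing with \eqref{inducedRep} for $T'$ gives $\phi(\theta_T(u,v)x)=\theta_{T'}(\psi(u),\psi(v))\phi(x)$, i.e.\ the commutativity of the diagram.

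There is no real obstacle: the three conditions of Definition \ref{Def} were designed so that every term appearing in $[\cdot,\cdot,\cdot]_T$ and in $\theta_T$ is transported correctly. The only subtlety is the $D$-identity, which must be stated up front so that the $D$-summand in both formulas is handled without breaking up into the two $\theta$-pieces in the middle of the computation.
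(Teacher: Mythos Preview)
Your proof is correct. The paper actually states this proposition without proof, so there is nothing to compare against; your direct verification using \eqref{c1}--\eqref{c3} together with the explicit formulas \eqref{ltsmod} and \eqref{inducedRep} is precisely the intended (and only reasonable) argument, and every step checks out.
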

$
$
 Motivated by Proposition \ref{prop}, we  show the relevance of our cohomology theory. Let $T$ and $T^{'}$ be two generalized Reynolds operators on $L$ with respect to a representation $(M,\theta)$ and  $(\phi,\psi)$ be a morphism from $T$ to $T^{'}$ such that  $\psi$ is  invertible. Denote by  $C^{2n-1}_T(M,L)$ the space of $(2n-1)$-cochains of a L.t.s $(M,[\c,\c,\c]_T)$  with coefficients in a representation $(L,\theta_T)$. Define
\begin{equation*}
  \begin{cases}
   \Theta :L\otimes L\rightarrow L\otimes L\quad \quad \quad \quad \quad \quad\quad \quad \;\;\;\ if \ n=0,\\
\Theta : C^{2n-1}_{T}(M ,L)\rightarrow C^{2n-1}_{T^{'}}(M ,L)\quad \quad\quad if \ n\geq 1.
\end{cases}
\end{equation*} by
\begin{equation*}
\begin{cases} \Theta(\chi)=\phi(\chi)=(\phi(x_1),\phi(x_2))\quad \quad\quad  \quad \quad\forall\ \chi =(x_1,x_2)\in L\otimes L\quad\;\;\; \ \ if \ n=0,\\
\Theta(\varphi)(u_1,\cdots,u_{2n-1})=\phi(\varphi(\psi^{-1}(u_1),\cdots,\psi^{-1}(u_{2n-1})))\; \ \forall u_i\in M\quad \quad\ if \ n\geq 1.
\end{cases}
\end{equation*}
\begin{thm}
With the above notations, $\Theta$ is a cochain map from the cochain complex $(C^{ \star}_{T}(M ,L),\Lambda_{T})$ to the cochain complex $(C^{\star}_{T^{'}}(M,L),\Lambda_{T^{'}})$. Consequently, it induces a morphism $\overline{\Theta}$ from the cohomology group ${\mathbf{H}}_T^{\star}(M,L)$ to ${\mathbf{H}}_{T'}^{\star}(M,L)$.
\end{thm}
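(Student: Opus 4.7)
The plan is to verify directly that $\Theta$ intertwines the coboundary maps, that is, $\Theta \circ \Lambda_T = \Lambda_{T'} \circ \Theta$ on each cochain space, and then to invoke the standard fact that a cochain map descends to cohomology. The proof splits naturally into the base case $n=0$ (involving $\partial_T$) and the inductive case $n\geq 1$ (involving $\delta_T^{2n-1}$).

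First I would handle the case $n=0$. Fix $\chi=(x_1,x_2)\in L\otimes L$ and $u\in M$. Expanding,
$\Theta(\partial_T(\chi))(u) = \phi\bigl(\partial_T(\chi)(\psi^{-1}(u))\bigr) = \phi\circ T\bigl(D(\chi)\psi^{-1}(u)+\mathcal{H}(\chi,T\psi^{-1}(u))\bigr) - \phi([\chi,T\psi^{-1}(u)])$.
Applying condition (c1) to rewrite $\phi\circ T$ as $T'\circ \psi$, condition (c2) to push $\psi$ through $D$ and to replace the inner $T\psi^{-1}$ by $T'$, condition (c3) to push $\psi$ through $\mathcal{H}$, and finally the L.t.s morphism property of $\phi$ to treat the bracket, the three pieces reassemble into $T'\bigl(D(\phi(\chi))u+\mathcal{H}(\phi(\chi),T'u)\bigr)-[\phi(\chi),T'u]=\partial_{T'}(\Theta\chi)(u)$, as required.

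For $n\geq 1$ the argument rests on two intertwining identities on $M$ that are already established in the preceding proposition: $\psi$ is a L.t.s morphism from $(M,[\cdot,\cdot,\cdot]_T)$ to $(M,[\cdot,\cdot,\cdot]_{T'})$, and the commuting square
$\phi\circ\theta_T(u,v)=\theta_{T'}(\psi(u),\psi(v))\circ\phi$ holds for all $u,v\in M$; subtracting two such relations yields the analogous one for $D_T$. Now I would apply $\phi$ to the explicit formula for $\delta_T^{2n-1}\varphi$ evaluated at $(\psi^{-1}(u_1),\ldots,\psi^{-1}(u_{2n+1}))$; the three intertwining identities, combined with condition (c1) to rewrite every $T\psi^{-1}(u_i)$ as $\phi^{-1}T'(u_i)$ inside arguments of $\theta$, $D$ and $\mathcal{H}$, send each term of $\delta_T^{2n-1}\varphi$ to the corresponding term of $\delta_{T'}^{2n-1}(\Theta\varphi)$ evaluated at $(u_1,\ldots,u_{2n+1})$. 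Summing yields $\Theta\circ\delta_T^{2n-1}=\delta_{T'}^{2n-1}\circ\Theta$.

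Once $\Theta\circ\Lambda_T=\Lambda_{T'}\circ\Theta$ is established, $\Theta$ sends $\mathcal{Z}^{\star}(M,L)$ into the cocycles of $T'$ and $\mathcal{B}^{\star}(M,L)$ into the coboundaries of $T'$, so it passes to the quotient to give the desired morphism $\overline{\Theta}\colon \mathbf{H}_T^{\star}(M,L)\to\mathbf{H}_{T'}^{\star}(M,L)$. The main obstacle is purely bookkeeping: matching up each of the many terms in the explicit Yamaguti formula for $\delta_T^{2n-1}$ with the corresponding term for $\delta_{T'}^{2n-1}$ after conjugation by $(\phi,\psi)$, while carefully tracking where $\psi^{-1}$ appears. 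However, because all constituent pieces ($\theta_T$, $D_T$, $[\cdot,\cdot,\cdot]_T$ and the coefficient $\mathcal{H}$ that is built into $\theta_T$) intertwine in a uniform way under the three identities above, no genuinely new calculation beyond the $n=0$ check is needed.
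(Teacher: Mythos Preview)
Your approach matches the paper's: both handle $n=0$ by a direct computation using \eqref{c1}--\eqref{c3} and the L.t.s morphism property of $\phi$, and handle $n\geq 1$ by invoking the preceding proposition (that $\psi$ is a morphism of the induced L.t.s structures and that $\phi\circ\theta_T(u,v)=\theta_{T'}(\psi(u),\psi(v))\circ\phi$) to push each term of the Yamaguti coboundary through the conjugation by $(\phi,\psi)$.

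One correction: you invoke $\phi^{-1}$ to rewrite $T\psi^{-1}(u_i)$ as $\phi^{-1}T'(u_i)$, but the hypotheses only assume $\psi$ is invertible, not $\phi$. This step is in fact unnecessary: working with the compact form of $\delta_T^{2n-1}$ (in terms of $\theta_T$, $D_T$ and $[\cdot,\cdot,\cdot]_T$) rather than its expanded form, the intertwining identities from the preceding proposition apply directly and $\phi^{-1}$ never enters. This is exactly how the paper proceeds, inserting $\psi\circ\psi^{-1}$ in the $M$-arguments of $\theta_{T'}$ and $D_{T'}$ and then pulling $\phi$ outside via the commuting square.
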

\begin{proof}
For $n=0$, let $\chi\in L\otimes L$ and $(\phi,\psi)$ be a morphism from $T$ to $T^{'}$. Applying  Eqs \eqref{c1}-\eqref{c3}, we obtain \small{
\begin{align*}
    \Theta \circ  \partial_T(\chi)u= &\Phi(\partial_T(\chi)(\psi^{-1}(u))\\=& \phi \circ T\Big(D(\chi)(\psi^{-1}(u)) + \mathcal{H}(\chi,T(\psi^{-1}(u)))\Big)-\phi[\chi,T(\psi^{-1}(u))]\\=
    &T^{'}\circ \psi\Big(D(\chi)(\psi^{-1}(u)) + \mathcal{H}(\chi,T(\psi^{-1}(u)))\Big)-[\phi(\chi),\phi\circ T(\psi^{-1}(u))]
    \\=
    &T^{'} \Big(D(\phi(\chi))\psi(\psi^{-1}(u)) + \mathcal{H}(\phi(\chi),\phi \circ T(\psi^{-1}(u)))\Big)-[\phi(\chi),T^{'}\circ \psi(\psi^{-1}(u))]
     \\=
    &T^{'} \Big(D(\phi(\chi)) u + \mathcal{H}(\phi(\chi),T^{'}u))\Big)-[\phi(\chi),T^{'}u]
     \\=
    &T^{'} \Big(D(\Theta(\chi)) u + \mathcal{H}(\Theta(\chi),T^{'}u))\Big)-[\Theta(\chi),T^{'}u]\\
    =&\partial_T'(\Theta(\chi))u.
\end{align*}}
For $ n \geq 1$, let $\varphi \in  \mathcal{C}_{T}^{2n-1}(M,L)$, we have {\small
\begin{align*}
    &\delta_ T'\Theta(\varphi)(v_1,v_2, \cdots , v_{2n+1})\\ =&\theta_T'(v_{2n},v_{2n+1})\Theta(\varphi)(v_1,v_2, \cdots , v_{2n-1})- \theta_T'(v_{2n-1},v_{2n+1})\Theta(\varphi)(v_1,v_2, \cdots , v_{2n-2},v_{2n}) \\
    &+ \sum_{k=1}^n (-1)^{n+k}D_T'(v_{2k-1},v_{2k})\Theta(\varphi)(v_1,v_2, \cdots , \widehat{v}_{2k-1},\widehat{v}_{2k}, \cdots , v_{2n+1})  \\
    &+ \sum_{k=1}^n \sum_{j=2k+1}^{2n+1}  (-1)^{n+k+1} \Theta(\varphi)(v_1,v_2, \cdots,  \widehat{v}_{2k-1},\widehat{v}_{2k}, \cdots, [v_{2k-1},v_{2k},v_j]_T',\cdots , v_{2n+1})\\
    =&\theta_T'(v_{2n},v_{2n+1})\phi(\varphi(\psi^{-1}(v_1),  \cdots ,\psi^{-1} (v_{2n-1}))- \theta_T'(v_{2n-1},v_{2n+1})\phi(\varphi(\psi^{-1}(v_1), \cdots , \psi^{-1}(v_{2n})) \\
    &+ \sum_{k=1}^n (-1)^{n+k}D_T'(v_{2k-1},v_{2k})\phi(\varphi(\psi^{-1}(v_1), \cdots , \widehat{\psi^{-1}(v_{2k-1})},\widehat{\psi^{-1}(v_{2k})}, \cdots ,\psi^{-1}(v_{2n+1}))  \\
    &+ \sum_{k=1}^n \sum_{j=2k+1}^{2n+1}  (-1)^{n+k+1} \phi(\varphi(\psi^{-1}(v_1), \cdots,   [\psi^{-1}(v_{2k-1}),\psi^{-1}(v_{2k}),\psi^{-1}(v_j)]_T',\cdots , \psi^{-1}(v_{2n+1}))\\
    =&\theta_T'(\psi \circ\psi^{-1}(v_{2n}),\psi \circ\psi^{-1}(v_{2n+1}))\phi(\varphi(\psi^{-1}(v_1),  \cdots ,\psi^{-1} (v_{2n-1}))\\&- \theta_T'(\psi \circ\psi^{-1}(v_{2n-1}),\psi \circ\psi^{-1}(v_{2n+1}))\phi(\varphi(\psi^{-1}(v_1), \cdots , \psi^{-1}(v_{2n})) \\
    &+ \sum_{k=1}^n (-1)^{n+k}D_T'(\psi \circ\psi^{-1}(v_{2k-1}),\psi \circ\psi^{-1}(v_{2k}))\phi(\varphi(\psi^{-1}(v_1), \cdots ,\psi^{-1}(v_{2n+1}))  \\
    &+ \sum_{k=1}^n \sum_{j=2k+1}^{2n+1}  (-1)^{n+k+1} \phi(\varphi(\psi^{-1}(v_1), \cdots,   [\psi^{-1}(v_{2k-1}),\psi^{-1}(v_{2k}),\psi^{-1}(v_j)]_T,\cdots , \psi^{-1}(v_{2n+1}))\\
     =&\phi \Big(\theta_T(\psi^{-1}(v_{2n}),\psi^{-1}(v_{2n+1}))\varphi(\psi^{-1}(v_1),  \cdots ,\psi^{-1} (v_{2n-1}))\\&- \theta_T(\psi^{-1}(v_{2n-1}),\Psi^{-1}(v_{2n+1})) \varphi(\psi^{-1}(v_1), \cdots , \psi^{-1}(v_{2n})) \\
    &+ \sum_{k=1}^n (-1)^{n+k}D_T(\psi^{-1}(v_{2k-1}),\psi^{-1}(v_{2k})) \varphi(\psi^{-1}(v_1), \cdots ,\psi^{-1}(v_{2n+1}))  \\
    &+ \sum_{k=1}^n \sum_{j=2k+1}^{2n+1}  (-1)^{n+k+1}  \varphi(\psi^{-1}(v_1), \cdots,   [\psi^{-1}(v_{2k-1}),\psi^{-1}(v_{2k}),\psi^{-1}(v_j)]_T,\cdots , \psi^{-1}(v_{2n+1}))\Big)\\
    =&\phi \Big(\delta_ T\varphi(\psi^{-1}(v_{1}),\psi^{-1}(v_{2}),\cdots, \psi^{-1}(v_{2n+1}))\Big)\\
     =&\Theta \Big(\delta_ T\varphi\Big)(v_{1} , v_{2},\cdots, v_{2n+1}).
    \end{align*}}
The proof is finished.
\end{proof}
\section{Obstruction class of   a generalized Reynolds operator}\label{Section-5}
In this section, we will use the Yamaguti cohomology theory constructed in the previous section  to investigate  formal deformations of generalized Reynolds operator on a \textsf{L.t.sRep} pair by introducing a special cohomology class associated to an order
$n$ deformation.  We show that a deformation of order $n$ is extendable if and only if this cohomology class in the third
cohomology group is trivial. Thus we call this cohomology class the obstruction class of a deformation of order $n$ being extendable.

Let $\mathbb{K}[[\lambda]]$ be the ring of power series in one variable $\lambda$. For any $\mathbb{K}$-linear space $M$, we denote by  $M[[\lambda]]$ the
vector space of formal power series in $\lambda$ with coefficients in $M$. If in addition, we have a structure of L.t.s $(L, [\cdot,\cdot, \cdot])$ over $\mathbb{K}$, then there is a L.t.s structure over the ring $\mathbb{K}[[\lambda]]$ on $L[[\lambda]]$ given
by
\begin{equation}\label{eq:power3-lie}
\Big[\sum_{i=0}^{+\infty}\lambda^{i}x_i,\sum_{j=0}^{+\infty}\lambda^{j}y_j,\sum_{k=0}^{+\infty}\lambda^{k}z_k\Big]=\sum_{s=0}^{+\infty}\sum_{i+j+k=s}\lambda^{s}[x_i,y_j,z_k],\;\forall x_i,y_j,z_k\in L.
\end{equation}
For any representation $(M,\theta)$ of $(L, [\cdot,\cdot, \cdot])$, there is a natural representation of the L.t.s
$L[[\lambda]]$ on the $\mathbb{K}[[\lambda]]$-module $M[[\lambda]]$, which is given by
\begin{equation}
\theta\Big(\sum_{i=0}^{+\infty}\lambda^{i}x_i,\sum_{j=0}^{+\infty}\lambda^{j}y_j\Big)\Big(\sum_{k=0}^{+\infty}\lambda^{k}m_k\Big)=\sum_{s=0}^{+\infty}\sum_{i+j+k=s}\lambda^{s}\theta(x_i,y_j)m_k,\;\forall x_i,y_j\in L,\;m_k\in M.
\end{equation}
Similarly, the $3$-cocycle $\mathcal{H}$ can be extended to a $3$-cocycle on the L.t.s $L[[\lambda]]$ with coefficients in $M[\lambda]]$,  denoted by the same notation $\mathcal{H}$. Consider a power series
\begin{equation}\label{eq:formal1}
T_\lambda=\sum_{i=0}^{+\infty}\lambda^{i}T_i,\quad\;T_i\in  \mathcal{C}_{T}^{1}(M,L),
\end{equation}
that is $T_\lambda\in  \mathcal{C}_{T}^{1}(M,L[[\lambda]])$. Extend it to be a $\mathbb{K}[[\lambda]]$-module map from
$M[[\lambda]]$ to $L[[\lambda]]$ which is still denoted by $T_\lambda$.
\begin{defi}
If the  power series $T_\lambda=\displaystyle\sum_{i=0}^{+\infty}T_i\lambda^{i}$ with $T_0=T$ satisfies:{\small
\begin{align*}
[T_\lambda u,T_\lambda v,T_\lambda w]=T_\lambda\Big(D(T_\lambda u,T_\lambda v)w+\theta(T_\lambda v,T_\lambda w)u-\theta(T_\lambda u,T_\lambda w)v + \mathcal{H}(T_\lambda u,T_\lambda v,T_\lambda w) \Big),
\end{align*}}
we say that it is a \textbf{formal deformation} of the  generalized Reynolds operator $T$.
\end{defi}
\begin{rmk}

 If $T_\lambda =T + \lambda T_1$ is  a  generalized Reynolds operator on a  \textsf{L.t.sRep} pair $(L, [\cdot,\cdot, \cdot],\theta)$, we say that  $T_1$ \textbf{generates a one-parameter infinitesimal deformation} of  $T$.

\end{rmk}
Based on the relationship between  generalized Reynolds operator and L.t.s structure, we have:
\begin{pro}
Let  $T_\lambda=\displaystyle\sum_{i=0}^{+\infty}\lambda^{i}T_i$ be a formal deformation of a  generalized Reynolds operator $T$ on a \textsf{L.t.sRep} pair
$(L, [\cdot,\cdot, \cdot],\theta)$. Then $[\cdot,\cdot,\cdot]_{T_\lambda}$ defined by
\begin{equation*}
    [u,v,w]_{T_\lambda}
=\displaystyle\sum_{k=0}^{+\infty}\sum_{i+j=k}\lambda^{k}\Big(D(T_i u,T_j v)w+\theta(T_i v,T_j w)u-\theta(T_i u,T_j w)v +\sum_{i+j+s=k} \mathcal{H}(T_i u,T_j v,T_s v)\Big)\quad\forall u,v,w\in M,
\end{equation*}
is a formal deformation of the associated L.t.s $(M,[\cdot,\cdot,\cdot]_T)$ given in Corollary \ref{coro}.
\end{pro}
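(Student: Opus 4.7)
The plan is to apply Corollary \ref{coro} in the formal setting over the ring $\mathbb{K}[[\lambda]]$. By hypothesis, $T_\lambda : M[[\lambda]] \to L[[\lambda]]$ is a generalized Reynolds operator with respect to the $\mathbb{K}[[\lambda]]$-linear extensions of $[\cdot,\cdot,\cdot]$, $\theta$, and $\mathcal{H}$; so Corollary \ref{coro} applied in this extended context immediately produces an L.t.s structure on $M[[\lambda]]$ given intrinsically by
\begin{equation*}
[u,v,w]_{T_\lambda} = \theta(T_\lambda v, T_\lambda w)u - \theta(T_\lambda u, T_\lambda w)v + D(T_\lambda u, T_\lambda v)w + \mathcal{H}(T_\lambda u, T_\lambda v, T_\lambda w).
\end{equation*}
The whole task then reduces to (i) matching this intrinsic bracket with the explicit power series displayed in the statement, and (ii) checking that its $\lambda = 0$ part is $[\cdot,\cdot,\cdot]_T$.

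For step (i), I would expand each of the four terms using bilinearity of $\theta$ and $D$ and trilinearity of $\mathcal{H}$ in the formal variable $\lambda$. For example,
\begin{equation*}
\theta(T_\lambda v, T_\lambda w)u = \sum_{k\geq 0} \lambda^k \sum_{i+j=k} \theta(T_i v, T_j w)\,u,
\qquad
\mathcal{H}(T_\lambda u, T_\lambda v, T_\lambda w) = \sum_{k\geq 0} \lambda^k \sum_{i+j+s=k} \mathcal{H}(T_i u, T_j v, T_s w),
\end{equation*}
and analogously for the other two pieces. Collecting coefficients of $\lambda^k$ reproduces exactly the multi-index formula in the proposition. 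For step (ii), the only summand surviving at $\lambda=0$ is the one with $k=0$, forcing $i=j=s=0$; since $T_0=T$, the resulting bracket is precisely $[\cdot,\cdot,\cdot]_T$ by Corollary \ref{coro} applied to the undeformed operator.

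The two Lie triple system axioms \eqref{lts 1}--\eqref{lts 2} for $[\cdot,\cdot,\cdot]_{T_\lambda}$ come for free: the proof of Corollary \ref{coro} uses only the defining identity \eqref{O op on lts} of a generalized Reynolds operator together with the \textsf{L.t.sRep} axioms and the cocycle condition on $\mathcal{H}$, all of which hold verbatim over $\mathbb{K}[[\lambda]]$ by assumption on $T_\lambda$. Hence $(M[[\lambda]], [\cdot,\cdot,\cdot]_{T_\lambda})$ is a formal L.t.s and, by step (ii), a formal deformation of $(M, [\cdot,\cdot,\cdot]_T)$.

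There is essentially no genuine obstacle here, only bookkeeping; the most delicate point is simply tracking the indices so that the three distinct sums $\sum_{i+j=k}$ arising from $\theta$ (twice) and $D$ assemble correctly alongside the triple sum $\sum_{i+j+s=k}$ from $\mathcal{H}$, which accounts for the slightly awkward nested sum in the displayed formula. Once this combinatorial accounting is done, the proposition follows directly from Corollary \ref{coro} by base change.
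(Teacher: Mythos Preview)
Your approach is correct and is precisely the natural argument; the paper states this proposition without proof, presumably because it is an immediate consequence of Corollary~\ref{coro} applied over $\mathbb{K}[[\lambda]]$, which is exactly what you do. Your bookkeeping of the indices and the verification at $\lambda=0$ are the only things to check, and you handle both.
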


\begin{defi}
Let $T_\lambda=\displaystyle\sum_{i=0}^{+\infty}\lambda^{i}T_i$ and $T^{'}_\lambda=\displaystyle \sum_{i=0}^{+\infty}\lambda^{i}T{'}_{i}$ be two formal deformations of a generalized Reynolds operator on a \textsf{L.t.sRep} pair
$(L, [\cdot,\cdot, \cdot],\theta)$. They are said to be equivalent if there exists
an element $\chi\in L\wedge L$, $\phi_i\in gl(L)$ and $\psi_i\in gl(M)$ $(i\geq2)$, such that the pair  
\begin{equation}\label{eq:equi}
\Big(\phi_\lambda=Id_L+\lambda[\chi,-]+\sum_{i=2}^{+\infty}\lambda^{i}\phi_i,\quad\;\psi_\lambda=Id_V+\lambda D(\chi)(-)+\lambda \mathcal{H}(\chi,-)+\sum_{i=2}^{+\infty}\lambda^{i}\psi_i\Big)
\end{equation}
is a morphism of  generalized Reynolds operators from  $T_\lambda$ to $T^{'}_\lambda$.
In particular, a formal deformation $T_\lambda$ of a generalized Reynolds operator $T$ is said to be trivial if there
exists an element $\chi\in L\wedge L$, $\phi_i \in  gl(L)$ and $\psi_i \in gl(M)$  ( $i\geq2$) such that $(\phi_\lambda, \psi_\lambda)$ defined by Eq. \eqref{eq:equi}
gives an equivalence between $T_\lambda$ and $T$, with the latter regarded as a deformation of
itself.
\end{defi}
\begin{thm}
If two formal deformations of a generalized Reynolds operator $T$ on a \textsf{L.t.sRep} pair
$(L, [\cdot,\cdot, \cdot],\theta)$ are equivalent, then their infinitesimals
are in the same cohomology class in ${\mathbf{H}}_T^{1}(M,L)$.
\end{thm}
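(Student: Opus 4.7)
The plan is to extract the first-order term from the morphism equation $\phi_\lambda \circ T_\lambda = T'_\lambda \circ \psi_\lambda$ coming from condition \eqref{c1}, and to recognize the resulting relation as a coboundary in the complex $(\mathcal{C}^{\bullet}_T(M,L), \Lambda_T)$. The auxiliary identities \eqref{c2} and \eqref{c3} will be used only to justify that the prescribed forms of $\phi_\lambda$ and $\psi_\lambda$ in Eq.~\eqref{eq:equi} really produce a morphism of generalized Reynolds operators to first order; they do not enter the coboundary identification itself.

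First I would expand both sides of $\phi_\lambda \circ T_\lambda = T'_\lambda \circ \psi_\lambda$ in powers of $\lambda$. The $\lambda^0$ term gives $T_0 = T'_0 = T$, which is automatic by hypothesis. Collecting the coefficient of $\lambda^1$ produces, on the left, $T_1(u) + [\chi, T u]$ (from $Id_L \circ T_1$ together with the linear piece $[\chi,-] \circ T_0$), and on the right, $T'_1(u) + T\bigl(D(\chi)u + \mathcal{H}(\chi, T u)\bigr)$ (from $T'_1 \circ Id_M$ together with $T_0$ composed with the linear piece of $\psi_\lambda$). Equating these expressions and rearranging yields
\[
T_1(u) - T'_1(u) = T\bigl(D(\chi)u + \mathcal{H}(\chi, Tu)\bigr) - [\chi, Tu] = \partial_T(\chi)(u),
\]
which is precisely the $1$-coboundary $\Lambda_T(\chi)$ associated with $\chi \in L \otimes L$, hence an element of $\mathcal{B}^1_T(M,L)$.

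To close the argument I would recall (or verify in one line by extracting the $\lambda^1$ term of the Maurer--Cartan equation of Theorem~\ref{caracterisation} applied to $T_\lambda$ and $T'_\lambda$) that both $T_1$ and $T'_1$ are $1$-cocycles, i.e., they lie in $\mathcal{Z}^1_T(M,L)$. Combined with the identification $T_1 - T'_1 = \partial_T(\chi) \in \mathcal{B}^1_T(M,L)$ obtained above, this yields $[T_1] = [T'_1]$ in $\mathbf{H}^1_T(M,L)$, which is the desired conclusion.

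The only genuinely delicate point I anticipate is the bookkeeping for conditions \eqref{c2} and \eqref{c3} at order $\lambda$, since these dictate why exactly $D(\chi)(-)$ and $\mathcal{H}(\chi,-)$ appear as the first-order corrections to $\psi_\lambda$, and why $[\chi,-]$ is the natural linear piece of $\phi_\lambda$. Checking that these are compatible requires the representation axioms \eqref{rep lts 1}--\eqref{rep lts 2} together with the $3$-cocycle identity \eqref{2.12}, but once admitted, the first-order cohomological conclusion drops out immediately from the direct computation above; no further structural input is needed.
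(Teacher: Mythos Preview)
Your argument is correct and is exactly the standard one: compare the $\lambda^1$-coefficients in $\phi_\lambda\circ T_\lambda=T'_\lambda\circ\psi_\lambda$ and recognise the difference $T_1-T'_1$ as $\partial_T(\chi)$. The paper in fact states this theorem without any proof, so there is nothing to compare against; your write-up supplies precisely the omitted computation, and the only cosmetic point to tidy is that the term ``$\mathcal{H}(\chi,-)$'' in the paper's formula \eqref{eq:equi} must be read as $\mathcal{H}(\chi,T(-))$ (otherwise the domain is wrong), which you implicitly and correctly do.
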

\begin{defi}
Let $T$ be a generalized Reynolds operator on a \textsf{L.t.sRep} pair $(L, [\cdot,\cdot, \cdot],\theta)$. If $T_\lambda=\displaystyle\sum_{i=0}^{n}\lambda^{i}T_i$  defines a $\mathbb{K}[[\lambda]]/ \lambda^{n+1}$-module from $M[[\lambda]]/ \lambda^{n+1} $ to the L.t.s
$L[[\lambda]] / \lambda^{n+1} $
satisfying  
\begin{align*}
   [T_\lambda u,T_\lambda v ,T_\lambda w ]=T_\lambda\Big(D(T_\lambda u ,T_\lambda v )w+\theta(T_\lambda v ,T_\lambda w )u-\theta(T_\lambda u ,T_\lambda w)v + \mathcal{H}(T_\lambda u,T_\lambda v,T_\lambda w) \Big),
\end{align*}
we say that $T_\lambda$ is \textbf{an order $n$ deformation} of the generalized Reynolds operator $T$.
\end{defi}
\begin{defi}
Let $T_\lambda=\displaystyle\sum_{i=0}^{n}\lambda^{i}T_i$ be an order $n$ deformation of a generalized Reynolds operator $T$ on a \textsf{L.t.sRep} pair $(L, [\cdot,\cdot, \cdot],\theta)$. If there
exists a $1$-cochain $T_{n+1}\in  \mathcal{C}_{T}^{1}(M,L) $ such that  $\widetilde{T}_\lambda= T_\lambda + \lambda^{n+1} T_{n+1}$ is an order $(n + 1)$ deformation
of $T$  then we say that $ T_\lambda$ is $\textbf{extendable}$.
\end{defi}
\begin{thm}\label{thm-obs}
  Let $T_\lambda=\displaystyle\sum_{i=0}^{n}\lambda^{i}T_i$ be an order $n$ deformation of $T$. Then $T_\lambda$ is extendable if and only if   the cohomology class $[Obs^T ] \in  {\mathbf{H}}_T^{3}(M,L)$
is trivial, where
$Obs^T $ is a $3$-cochain in $\mathcal{C}_{T}^{3}(M,L)$ defined by  : \small{
\begin{align}
    &Obs^T(u,v,w) \\=& \sum_{0\leq i,j,k\leq n+1\atop
    i+j+k> n+1}  [T_i u,T_j v,T_k w]-T_i\big (D(T_j u,T_k v)w
+\theta (T_j v,T_k w)u-\theta(T_j u,T_k w)v + \mathcal{H}(T_i u,T_j v,T_k w) \Big).\nonumber
\end{align}}
\end{thm}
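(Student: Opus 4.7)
The plan is to substitute $\widetilde{T}_\lambda = T_\lambda + \lambda^{n+1} T_{n+1}$ into the defining identity of a generalized Reynolds operator and examine the coefficient of $\lambda^{n+1}$. Since $T_\lambda$ is already an order $n$ deformation, all coefficients of $\lambda^{0}, \ldots, \lambda^{n}$ vanish identically, so only the $\lambda^{n+1}$-coefficient carries information about the extension. Using trilinearity of the bracket together with the multilinearity of $\theta$, $D$, and $\mathcal{H}$, this coefficient splits naturally into two parts: those terms in which $T_{n+1}$ appears exactly once, and those involving only $T_0, T_1, \ldots, T_n$. After a bookkeeping exercise using the explicit formula
\begin{equation*}
\theta_T(u,v)x = [x, Tu, Tv] - T\Bigl(D(x, Tu)v - \theta(x, Tv)u + \mathcal{H}(x, Tu, Tv)\Bigr),
\end{equation*}
together with the analogous expression for $D_T$, I expect the first part to assemble into $(\delta_T^1 T_{n+1})(u, v, w)$, while the second part is, by construction, $-Obs^T(u, v, w)$. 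Hence $\widetilde{T}_\lambda$ is an order $n+1$ deformation if and only if $\delta_T^1 T_{n+1} = Obs^T$, so extendability of $T_\lambda$ is equivalent to $Obs^T$ being a $3$-coboundary in $\mathcal{B}_T^3(M, L)$.

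For the cohomology class $[Obs^T] \in \mathbf{H}_T^3(M, L)$ to be meaningful, I must also verify that $Obs^T$ is a $3$-cocycle. The cleanest route is via the Maurer--Cartan formulation of Theorem~\ref{caracterisation}. Writing
\begin{equation*}
\Phi(\lambda) := \frac{1}{3!}\, l_3(T_\lambda, T_\lambda, T_\lambda) + \frac{1}{4!}\, l_4(T_\lambda, T_\lambda, T_\lambda, T_\lambda),
\end{equation*}
the hypothesis that $T_\lambda$ is an order $n$ deformation translates exactly into $\Phi(\lambda) \equiv 0 \pmod{\lambda^{n+1}}$, and the coefficient of $\lambda^{n+1}$ in $\Phi(\lambda)$ is, up to the sign provided by $d_T(f) = (-1)^{n-1}\Lambda_T(f)$, precisely $Obs^T$. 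Applying the twisted differential $d_T = l_1^T$ to this coefficient and invoking the $L_\infty$-identities of $(C^\ast(M, L), l_3, l_4)$ together with the vanishing of the lower-order coefficients of $\Phi(\lambda)$ yields $d_T(Obs^T) = 0$, equivalently $\delta_T^3 Obs^T = 0$.

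Combining the two steps gives the theorem: the order $n$ deformation $T_\lambda$ is extendable to an order $n+1$ deformation if and only if there exists $T_{n+1} \in \mathcal{C}_T^1(M, L)$ with $\delta_T^1 T_{n+1} = Obs^T$, which is exactly the statement that the class $[Obs^T]$ vanishes in $\mathbf{H}_T^3(M, L)$.

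The principal obstacle is the bookkeeping in the first step. The deformation identity has four distinct kinds of terms (the triple bracket, two $\theta$-contributions, one $D$-contribution, and one $\mathcal{H}$-contribution) and, when $T_{n+1}$ is inserted in one of the three input slots of $T_\lambda$, each kind produces a different pattern; regrouping them into the compact expression $\delta_T^1 T_{n+1}$ requires systematic use of the induced representation $\theta_T$ together with the generalized Reynolds identity for $T = T_0$ to cancel stray cubic terms in $T_{n+1}$. The cocycle verification for $Obs^T$ is, by contrast, essentially formal once the Maurer--Cartan framework is invoked; a purely direct proof from the explicit defining formula would be substantially longer.
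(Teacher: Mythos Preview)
Your approach is essentially the same as the paper's: expand the generalized Reynolds identity for $\widetilde{T}_\lambda$, isolate the coefficient of $\lambda^{n+1}$, and recognize that the terms containing $T_{n+1}$ assemble into $\delta_T^1 T_{n+1}$ while the remaining terms constitute $Obs^T$. You go further than the paper by also verifying (via the Maurer--Cartan description) that $Obs^T$ is a $3$-cocycle so that the class $[Obs^T]$ is well-defined; the paper's proof omits this check and simply takes it for granted.
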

\begin{proof}
 Let  $\widetilde{T}_\lambda=\displaystyle\sum_{i=0}^{n+1}\lambda^{i}T_i$ be the extension of $T_\lambda$, then for all $u, v, w \in M $
 \begin{align*}
   [\mathcal{\widetilde {T}}_\lambda u,\mathcal{\widetilde {T}}_\lambda v,\mathcal{\widetilde {T}}_\lambda w]=\mathcal{\widetilde {T}}_\lambda\Big(D(\widetilde{T}_\lambda u,\widetilde{T}_\lambda v )w+\theta(\widetilde{T}_\lambda v,\widetilde{T}_\lambda w )u-\theta(\widetilde{T}_\lambda u,\widetilde{T}_\lambda w)v + \mathcal{H}(\widetilde{T}_\lambda u,\widetilde{T}_\lambda v,\widetilde{T}_\lambda w) \Big).
\end{align*}
Expanding the equation and comparing the coefficients of $\lambda^{n}$ yields that:
\begin{align*}
    & [T_{n+1}  u,Tv,Tw]+[ Tu,T_{n+1}v,Tw] +[Tu,T v,T_{n+1} w]- T\big(D(T_{n+1} u,T v)w - \theta(T_{n+1}u,T w)v \\
    & + \mathcal{H}(T_{n+1} u,T v,T w)+D(T u,T_{n+1} v)w + \theta(T_{n+1} v,T w)  v +\mathcal{H}(T u,T_{n+1} v,Tw) \\
    &+\theta(Tv,T_{n+1} w)u - \theta(Tu,T_{n+1} w)v + \mathcal{H}(T u,T v,T_{n+1}w)\big)\\
     &+ \sum_{0\leq i,j,k\leq n+1\atop
    i+j+k> n+1}  [T_i u,T_j v,T_k w]-T_i\big (D(T_j u,T_k v)w
+\theta (T_j v,T_k w)u-\theta(T_j u,T_k w)v + \mathcal{H}(T_i u,T_j v,T_k w) \Big)\\
=&0,
\end{align*}
which is equivalent to  
\begin{align}
    &Obs^T= \delta^{1}_TT_{n+1}.
\end{align}

Thus, the cohomology class $[Obs^T ]$ is trivial. Conversely, suppose that the cohomology class  $Obs^T$ is trivial, then there exists a $1$-cochain $T_{n+1}\in  \mathcal{C}_{T}^{1}(M,L) $ such that $Obs^T= \delta^{1}_T(T_{n+1})$. Set $\widetilde{T}_\lambda= T_\lambda + \lambda^{n+1} T_{n+1}$. Then  $\widetilde{T}_\lambda$ satisfies {\small
\begin{align*}
& \sum_{
    i+j+k=s}  [T_i u,T_j v,T_k w]-T_i\big (D(T_j u,T_k v)w
+\theta (T_j v,T_k w)u-\theta(T_j u,T_k w)v + \mathcal{H}(T_i u,T_j v,T_k w) \Big) \ \forall 0 \leq s\leq n+1,
\end{align*}}
which implies that $\widetilde{T}_\lambda$ is an order $(n + 1)$ deformation of $T$. Hence it is an extension of $T_\lambda$.
\end{proof}
\begin{defi}
 Let $T_\lambda=\displaystyle\sum_{i=0}^{n}\lambda^{i}T_i$ be an order $n$ deformation of $T$. Then
the cohomology class  $[Obs^T ] \in  {\mathbf{H}}_T^{3}(M,L)$ defined in Theorem \ref{thm-obs} is called \textbf{the obstruction class}
of  $T_\lambda$ being extendable.
\end{defi}
\begin{cor}
Let $T$ be a generalized Reynolds operator on a \textsf{L.t.sRep} pair $(L, [\cdot,\cdot, \cdot],\theta)$. If  ${\mathbf{H}}_T^{3}(M,L)=0$,  then every $1$-cocycle in $ \mathcal{Z}^{1}(M,L) $ is the
infinitesimal of some formal deformation of $T$.
\end{cor}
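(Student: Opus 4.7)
The plan is to produce the formal deformation inductively, with Theorem \ref{thm-obs} doing essentially all the work. Given any $T_1\in\mathcal{Z}^{1}(M,L)$, I will first check that $T_\lambda^{(1)}:=T+\lambda T_1$ is an order-$1$ deformation of $T$. Expanding the defining identity of a generalized Reynolds operator for $T_\lambda^{(1)}$ modulo $\lambda^{2}$, the $\lambda^{0}$-term vanishes because $T$ itself is a generalized Reynolds operator, while the $\lambda^{1}$-term is, after rearrangement, exactly the expression for $\delta^{1}_T T_1$ as spelled out in \eqref{1.cocycle}. Hence the order-$1$ deformation condition reduces to the $1$-cocycle equation $\delta^1_T T_1=0$, which holds by assumption.

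Next, I proceed by induction on $n\geq 1$. Suppose an order-$n$ deformation $T_\lambda^{(n)}=\sum_{i=0}^{n}\lambda^{i}T_i$ with prescribed infinitesimal $T_1$ has been built. By Theorem \ref{thm-obs}, there is an associated obstruction cohomology class $[Obs^T]\in \mathbf{H}_T^{3}(M,L)$, and $T_\lambda^{(n)}$ extends to order $n+1$ precisely when this class vanishes. The hypothesis $\mathbf{H}_T^{3}(M,L)=0$ forces $[Obs^T]=0$, so there exists $T_{n+1}\in \mathcal{C}_T^{1}(M,L)$ with $\delta^{1}_T T_{n+1}=Obs^T$. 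Then $T_\lambda^{(n+1)}:=T_\lambda^{(n)}+\lambda^{n+1}T_{n+1}$ is the desired extension, as can be read off directly from the equivalence established in the proof of Theorem \ref{thm-obs}.

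Iterating, I assemble the sequence $T_1,T_2,\ldots$ into a formal power series $T_\lambda=\sum_{i\geq 0}\lambda^{i}T_i$. Each finite truncation is an order-$n$ deformation by construction, which is exactly the statement that $T_\lambda$ satisfies the generalized Reynolds identity over $\mathbb{K}[[\lambda]]$. Thus $T_\lambda$ is a formal deformation of $T$, and by construction its coefficient of $\lambda$ is the chosen $T_1$, so $T_1$ is its infinitesimal.

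The main potential obstacle is, strictly speaking, only the base case: verifying that the order-$1$ expansion of the generalized Reynolds identity is identical to $\delta^{1}_T T_1=0$ via the explicit formula \eqref{1.cocycle}. This is a direct but slightly tedious term-by-term expansion. Once that is in place, the inductive step is purely formal and relies entirely on Theorem \ref{thm-obs}, so the whole argument is essentially a short wrapping of that theorem.
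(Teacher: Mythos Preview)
Your proposal is correct and follows exactly the intended approach: the paper states this corollary without proof, as it is meant to be the standard inductive consequence of Theorem~\ref{thm-obs}. Your base case (that the $\lambda^1$-coefficient of the generalized Reynolds identity is precisely $\delta^1_T T_1$) and the inductive extension via vanishing obstruction are exactly what the corollary is packaging.
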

\section{\textsf{NS}-Lie triple systems}\label{Section-6}
The aim of this section is to  introduce the notion of a \textsf{NS}-Lie triple system. A \textsf{NS}-Lie triple system
gives rise to a L.t.s. and a representation on itself. We show that a generalized
Reynolds operator induces a \textsf{NS}-Lie triple system. Thus, \textsf{NS}-Lie triple systems can
be viewed as the underlying algebraic structures of generalized Reynolds operators
on  \textsf{L.t.sRep} pairs.  Also, we show that Lie
algebras, NS-Lie algebras, Lie triple systems and
NS-Lie triple systems are closely related.

\subsection{Definitions and constructions}
\begin{defi}\label{NS-DEf}
Let $L$ be a vector space together with two trilinear maps $\{\cdot,\cdot,\cdot\}$,  $[\cdot,\cdot,\cdot]: \otimes^3 L \to L .$ The triple $(L,\{\cdot,\cdot,\cdot\}, [\cdot,\cdot,\cdot])$ is called \textbf{NS-Lie triple system} if the following identities hold for all $x_1,x_2$ and $y_1,y_2,y_3 \in L$
\begin{align}
&[y_1,y_2,y_3]= - [y_2,y_1,y_3],
    \quad \quad  \circlearrowleft_{y_1,y_2,y_3} [y_1,y_2,y_3]=0,\label{NS-1}\\
    &\{x_1,x_2,[\![ y_1,y_2,y_3]\!]\}=\{\{x_1,x_2,y_1\},y_2,y_3\} -\{\{x_1,x_2,y_2\},y_1,y_3\} +\{y_1,y_2,\{x_1,x_2,y_3\}\}^\ast,\label{NS-2}\\
    &\{x_1,x_2,\{y_1,y_2,y_3\}\}^\ast= \{\{x_1,x_2,y_1\}^\ast,y_2,y_3\} + \{y_1,[\![ x_1,x_2,y_2]\!],y_3\} + \{y_1,y_2,[\![ x_1,x_2,y_3]\!]\},\label{NS-3}\\
    &[x_1,x_2,[\![ y_1,y_2,y_3]\!]]= [[\![x_1,x_2, y_1]\!],y_2,y_3]+ [y_1,[\![x_1,x_2, y_2]\!],y_3]+ [y_1,y_2,[\![x_1,x_2, y_3]\!]]\nonumber\\
    &\quad \quad \quad \quad \quad \quad \quad \quad \quad + \{[x_1,x_2, y_1],y_2,y_3\}- \{[x_1,x_2, y_2],y_1,y_3\} +  \{y_1,y_2,[x_1,x_2, y_3]\}^\ast\nonumber\\
     &\quad \quad \quad \quad \quad \quad \quad \quad \quad-  \{x_1,x_2,[ y_1,y_2,y_3]\}^\ast,\label{NS-4}
\end{align}
where   $\{\cdot,\cdot,\cdot\}^\ast$ and $[\![\cdot,\cdot, \cdot]\!]$ are defined by  
\begin{align}
&\{y_1,y_2,y_3\}^\ast= \{y_3,y_2,y_1\}-\{y_3,y_1,y_2\},\nonumber\\
   & [\![y_1,y_2,y_3]\!]=  \{y_1,y_2,y_3\}^\ast  + \{y_1,y_2,y_3\}-\{y_2,y_1,y_3\} +[y_1,y_2,y_3].\label{crochet-NS}
\end{align}

\end{defi}
\begin{rmk}
Let $(L,\{\cdot,\cdot,\cdot\}, [\cdot,\cdot,\cdot])$ be a \textbf{NS}-Lie triple system. In  the one  hand, if     $\{\cdot,\cdot,\cdot\}$ is trivial, we  get that $(A,[\cdot,\cdot,\cdot])$ is a
L.t.s. In the other hand, if $[\cdot,\cdot,\cdot]$ is trivial, then $(A,\{\cdot,\cdot,\cdot\})$ is  a \textsf{pre-L.t.s}  introduced in  \cite{Mabrouk}. Thus, NS-Lie triple systems
 are     generalizations of both  L.t.s and \textsf{pre-L.t.s}.\\
In the following, we show that \textsf{NS}-Lie triple systems split \textsf{L.t.sRep} pairs.
\end{rmk}
\begin{thm}\label{Ns}
Let $(L,\{\cdot,\cdot,\cdot\}, [\cdot,\cdot,\cdot])$ be a NS-Lie triple system. Then $(L,[\![\cdot,\cdot,\cdot]\!])$ is a L.t.s
which is called the \textbf{sub-adjacent} L.t.s of $(L,\{\cdot,\cdot,\cdot\}, [\cdot,\cdot,\cdot])$ and denoted by $\mathfrak{L}$. Moreover, $(L,\varrho)$ is a representation of $\mathfrak{L}$, where the linear map $\varrho : L \otimes L \to End(L)$ is defined by :
\begin{align*}
    &\varrho(x,y)z = \{z,x,y\}, \quad \quad \forall x,y,z \in L.
\end{align*}
\end{thm}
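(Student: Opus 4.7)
The plan is to verify directly the defining identities of a Lie triple system for $[\![\cdot,\cdot,\cdot]\!]$ and then the two representation axioms \eqref{rep lts 1}--\eqref{rep lts 2} for $\varrho$, extracting each identity from a specific NS-axiom. First I would unfold the definition
\[
[\![y_1,y_2,y_3]\!] \;=\; \{y_3,y_2,y_1\}-\{y_3,y_1,y_2\}+\{y_1,y_2,y_3\}-\{y_2,y_1,y_3\}+[y_1,y_2,y_3],
\]
so that every subsequent check reduces to bookkeeping of the six $\{\cdot,\cdot,\cdot\}$-terms plus one $[\cdot,\cdot,\cdot]$-term. Swapping $y_1\leftrightarrow y_2$ pairs up the $\{\cdot\}$-terms with opposite signs, while the bracket part flips sign by the first half of \eqref{NS-1}, giving $[\![y_1,y_2,y_3]\!]=-[\![y_2,y_1,y_3]\!]$. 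Summing cyclically in $y_1,y_2,y_3$, each of the six $\{\cdot\}$-terms appears once with $+1$ and once with $-1$, and the $[\cdot,\cdot,\cdot]$-part vanishes by the cyclic identity in \eqref{NS-1}. This establishes \eqref{lts 1} for $[\![\cdot,\cdot,\cdot]\!]$.

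For the fundamental identity \eqref{lts 2}, the key is to recognise that $[\![x_1,x_2,\cdot]\!]$ acts on each slot as the sum of $\{x_1,x_2,\cdot\}^\ast$, $\{x_1,x_2,\cdot\}-\{x_2,x_1,\cdot\}$ and $[x_1,x_2,\cdot]$, and that (NS-2), (NS-3), (NS-4) respectively express the derivation property of these three pieces with respect to $[\![\cdot,\cdot,\cdot]\!]$. Hence I would add \eqref{NS-2}, \eqref{NS-3} and \eqref{NS-4} together for the triple $(x_1,x_2,y_1,y_2,y_3)$: the cross terms $\{\cdots,[ x_1,x_2,\cdot],\cdots\}$, $\{[x_1,x_2,\cdot],\cdots,\cdots\}^\ast$ and $\{x_1,x_2,[\cdots]\}^\ast$ on the right-hand side of \eqref{NS-4} are exactly what is needed to complete the outputs of \eqref{NS-2} and \eqref{NS-3} into the full trilinear bracket $[\![\cdot,\cdot,\cdot]\!]$ on the right, while on the left the sum reconstructs $[\![x_1,x_2,[\![y_1,y_2,y_3]\!]]\!]$. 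This yields the derivation identity and shows that $(L,[\![\cdot,\cdot,\cdot]\!])$ is a Lie triple system.

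Having constructed $\mathfrak{L}$, I would next verify that $\varrho(x,y)z=\{z,x,y\}$ defines a representation. Computing $D(x,y)z=\varrho(y,x)z-\varrho(x,y)z=\{z,y,x\}-\{z,x,y\}$, one sees that the operator $D(x,y)$ applied to $z$ is precisely the second and third summand in $[\![x,y,z]\!]$ read from the slot of $z$. With this dictionary, identity \eqref{rep lts 1} rewrites, after evaluating on a generic $w\in L$ and using $[\![y,z,t]\!]$ in its expanded form, as an instance of axiom \eqref{NS-2} applied to $(y,z,w,x,t)$ (up to the skew-relations \eqref{NS-1}). Similarly, \eqref{rep lts 2} translates into axiom \eqref{NS-3}, once one expands $[\![x,y,z]\!]$ and $[\![x,y,t]\!]$ and rearranges using \eqref{NS-1}. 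Each of these verifications is a direct dictionary-translation, not a long manipulation.

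The main obstacle I anticipate is the derivation identity for $[\![\cdot,\cdot,\cdot]\!]$: this is the one place where the sums of \eqref{NS-2}, \eqref{NS-3}, \eqref{NS-4} must combine cleanly, and tracking the twelve-or-so $\{\cdot,\cdot,\cdot\}$ and $\{\cdot,\cdot,\cdot\}^\ast$ cross-terms so that they reconstruct $[\![\cdot,\cdot,\cdot]\!]$ on every slot requires care. I would therefore organise the calculation by collecting terms according to the slot in which $(x_1,x_2)$ acts, and use the skew-symmetry in the first two arguments \eqref{NS-1} at each stage to bring terms to a canonical form; once this is done, the representation axioms follow with essentially the same bookkeeping restricted to the pieces not involving $[\cdot,\cdot,\cdot]$.
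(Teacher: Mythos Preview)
Your plan is correct and follows essentially the same route as the paper: the skew-symmetry and cyclic identity for $[\![\cdot,\cdot,\cdot]\!]$ are handled by direct bookkeeping, the fundamental identity is obtained by combining \eqref{NS-2}--\eqref{NS-4}, and the two representation axioms for $\varrho$ are read off from \eqref{NS-2} and \eqref{NS-3} respectively (exactly as the paper does). One small caveat: your description of \eqref{NS-2}, \eqref{NS-3}, \eqref{NS-4} as ``the derivation property of these three pieces'' is a useful heuristic but not literally accurate---for instance \eqref{NS-3} expresses $\{x_1,x_2,\{y_1,y_2,y_3\}\}^\ast$, not $\{x_1,x_2,[\![y_1,y_2,y_3]\!]\}^\ast$, and one needs several instances (including the $(x_2,x_1)$ swaps) rather than a single copy of each axiom; but you clearly anticipate this in your discussion of the cross-terms, so the plan will go through once you carry out the bookkeeping you describe.
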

\begin{proof}It obvious that $[\![\cdot,\cdot,\cdot]\!]$ satisfies \eqref{lts 1}.\\
 By Eqs. \eqref{NS-2}-\eqref{NS-4}, for all $x_1,x_2$ and $ y_1,y_2,y_3 \in L$, we have{\small
\begin{align*}
   &[\![x_1,x_2, [\![y_1,y_2,y_3]\!]]\!]- [\![[\![x_1,x_2, y_1]\!],y_2,y_3]\!]-[\![y_1,[\![x_1,x_2, y_2]\!],y_3]\!]-[\![y_1,y_2,[\![x_1,x_2, y_3]\!]]\!]\\
   =&\{x_1,x_2,\{y_1,y_2,y_3\}\}^\ast+ \{x_1,x_2,\{y_3,y_2,y_1\}\}^\ast- \{x_1,x_2,\{y_3,y_1,y_2\}\}^\ast- \{x_1,x_2,\{y_2,y_1,y_3\}\}^\ast\\
   & - \{x_1,x_2,[y_1,y_2,y_3]\}^\ast- \{x_1,x_2,[\![y_1,y_2,y_3]\!]\} +\{x_2,x_1,[\![y_1,y_2,y_3]\!]\}- [x_1,x_2,[\![y_1,y_2,y_3]\!]]\\
   &-\{y_3,y_2,[\![x_1,x_2,y_1]\!]\}+\{y_3,[\![x_1,x_2,y_1]\!],y_2\}-\{\{x_1,x_2,y_1\}^\ast,y_2,y_3\}-\{\{x_1,x_2,y_1\},y_2,y_3\}\\
   &+ \{\{x_2,x_1,y_1\}^\ast,y_2,y_3\}-\{[x_1,x_2,y_1],y_2,y_3\}+\{y_2,[\![x_1,x_2,y_1]\!],y_3\}- [[\![x_1,x_2,y_1]\!],y_2,y_3]\\
   &-\{y_3,[\![x_1,x_2,y_2]\!],y_1\}+\{y_3,y_1,[\![x_1,x_2,y_2]\!]\}-\{y_1,[\![x_1,x_2,y_2]\!],y_3\} + \{\{x_1,x_2,y_2\}^\ast,y_1,y_3\}\\ &+\{\{x_1,x_2,y_2\},y_1,y_3\}- \{\{x_2,x_1,y_2\},y_1,y_3\}+ \{[x_1,x_2,y_2],y_1,y_3\}- [y_1,[\![x_1,x_2,y_2]\!],y_3]\\
   &-\{\{x_1,x_2,y_3\}^\ast,y_2,y_1\} + \{\{x_1,x_2,y_3\}^\ast,y_1,y_2\}- \{y_1,y_2,\{x_1,x_2,y_3\}\}^\ast+\{y_1,y_2,\{x_2,x_1,y_3\}\}^\ast\\
   &- \{y_1,y_2,[x_1,x_2,y_3]\}^\ast+ \{y_1,y_2,[\![x_1,x_2,y_3]\!]\}- [y_1,y_2,[\![x_1,x_2,y_3]\!]]\\
   =&0,
\end{align*}}
which implies that $\mathfrak{L}$ is a L.t.s. By Eq.\eqref{NS-2}, Eq.\eqref{NS-3}, we have
\begin{align*}
     & \varrho(x_1,x_2)\varrho(y_1,y_2)-\varrho(y_2,x_2)\varrho(y_1,x_1)-\varrho(y_1,[\![y_2,x_1,x_2]\!])+D_\varrho(y_2,x_1)\varrho(y_1,x_2)=0,
    \\
    &\varrho(x_1,x_2)D_\varrho(y_1,y_2)-D^\ast(y_1,y_2)\varrho(x_1,x_2)+\varrho([\![y_1,y_2,x_1]\!],x_2)+\varrho(x_1,[\![y_1,y_2,x_2]\!])=0,
\end{align*}
where $D_\varrho(x_1,x_2)=\varrho(x_2,x_1)- \varrho(x_1,x_2)$. Therefore, $ \varrho$ is a representation of $\mathfrak{L}$ on $L$.  Then $(L,[\![\cdot,\cdot,\cdot]\!],\varrho)$ is a \textsf{L.t.sRep} pair.
\end{proof}
The following results illustrate that \textsf{NS}-Lie triple systems can be viewed as the underlying algebraic
structures of generalized Reynolds operators on   \textsf{L.t.sRep} pairs.
\begin{thm}\label{thm-NS}
 Let $T:M \to L $ be a generalized Reynolds operator on a \textsf{L.t.sRep} pair $(L, [\cdot,\cdot, \cdot],\theta)$. Then
\begin{align}\label{induced-NS}
    &\{u,v,w\}= \theta(Tv,Tw)u \quad\text{and} \quad
    [u,v,w]=\mathcal{H}(Tu,Tv,Tw)
\end{align}
defines a  \textsf{NS-Lie triple system} structure on $M$. It is called\textbf{ the induced \textbf{NS-Lie triple system}}.
\end{thm}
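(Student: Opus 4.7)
The plan is to recognize that the two auxiliary operations $\{\cdot,\cdot,\cdot\}^{\ast}$ and $[\![\cdot,\cdot,\cdot]\!]$ built from the candidate NS-structure coincide with quantities already associated to $T$, and then to show that each of the four NS identities \eqref{NS-1}--\eqref{NS-4} becomes exactly one of the structural conditions already available: the cocycle conditions on $\mathcal{H}$, the two representation axioms \eqref{rep lts 1}--\eqref{rep lts 2}, and the fact that $T$ is a morphism of L.t.s from $(M,[\cdot,\cdot,\cdot]_T)$ to $(L,[\cdot,\cdot,\cdot])$.

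The first step is a direct unravelling. Using $D(x,y)=\theta(y,x)-\theta(x,y)$, I compute
\begin{equation*}
\{u,v,w\}^{\ast}=\{w,v,u\}-\{w,u,v\}=\theta(Tv,Tu)w-\theta(Tu,Tv)w=D(Tu,Tv)w,
\end{equation*}
and therefore
\begin{equation*}
[\![u,v,w]\!]=D(Tu,Tv)w+\theta(Tv,Tw)u-\theta(Tu,Tw)v+\mathcal{H}(Tu,Tv,Tw)=[u,v,w]_{T},
\end{equation*}
which is precisely the induced L.t.s bracket on $M$ from Corollary \ref{coro}. In particular $T[\![u,v,w]\!]=[Tu,Tv,Tw]$, which is the key identity that will convert $T$-applications on the ``inner'' arguments into triple brackets in $L$. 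The identities \eqref{NS-1} are then immediate: $[u,u,v]=\mathcal{H}(Tu,Tu,Tv)=0$ by \eqref{SkewCochain} and the cyclic identity comes from \eqref{2.11}.

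The remaining three axioms are then verified by substitution. For \eqref{NS-2}, both sides become $\theta$-expressions applied to $x_1$; rewriting the LHS using $T[\![y_1,y_2,y_3]\!]=[Ty_1,Ty_2,Ty_3]$, the identity reduces to the representation axiom \eqref{rep lts 1} evaluated on $(Tx_2,Ty_1,Ty_2,Ty_3)$. For \eqref{NS-3}, both sides become expressions applied to $y_1$; after the same substitution for $[\![\cdot,\cdot,\cdot]\!]$ and using $\{u,v,w\}^{\ast}=D(Tu,Tv)w$, the identity reduces to \eqref{rep lts 2} evaluated on $(Tx_1,Tx_2,Ty_2,Ty_3)$. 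For \eqref{NS-4}, the $[\cdot,\cdot,\cdot]$ terms become values of $\mathcal{H}$ with one inner argument of the form $[Tx_1,Tx_2,T(\cdot)]$, and the $\{\cdot,\cdot,\cdot\}$ and $\{\cdot,\cdot,\cdot\}^{\ast}$ terms become $\theta$ or $D$ applied to values of $\mathcal{H}$; after rearranging, one obtains exactly the 3-cocycle relation \eqref{2.12} for $\mathcal{H}$ evaluated at $(Tx_1,Tx_2,Ty_1,Ty_2,Ty_3)$.

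The main obstacle is bookkeeping rather than conceptual: verifying \eqref{NS-4} requires keeping track of six terms with correct signs and distinguishing $\{\cdot,\cdot,\cdot\}$ from $\{\cdot,\cdot,\cdot\}^{\ast}$, and to match the cocycle condition \eqref{2.12} one must carefully read off which arguments of $\mathcal{H}$ in \eqref{2.12} correspond to the $x_i$'s and which to the $y_j$'s in the NS identity. Nothing uses the generalized Reynolds operator equation \eqref{O op on lts} directly for the three NS axioms themselves — the operator equation is used only implicitly through the identity $T[u,v,w]_T=[Tu,Tv,Tw]$ recorded in Corollary \ref{coro}. This delivers a concise, structural proof of Theorem \ref{thm-NS}.
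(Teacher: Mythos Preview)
Your proposal is correct and follows essentially the same route as the paper's own proof: verify \eqref{NS-1} from the cochain conditions \eqref{SkewCochain}--\eqref{2.11} on $\mathcal{H}$, reduce \eqref{NS-2} and \eqref{NS-3} to the representation axioms \eqref{rep lts 1}--\eqref{rep lts 2}, and reduce \eqref{NS-4} to the $3$-cocycle identity \eqref{2.12}. Your write-up is in fact more transparent than the paper's, because you isolate up front the two key identities $\{u,v,w\}^{\ast}=D(Tu,Tv)w$ and $[\![u,v,w]\!]=[u,v,w]_{T}$ (hence $T[\![u,v,w]\!]=[Tu,Tv,Tw]$), whereas the paper performs the full expansion of \eqref{NS-2} term by term and then declares the remaining axioms ``similarly'' and ``since $\mathcal{H}$ is a $3$-cocycle''.
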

\begin{proof}
For any $u,v,w \in L $, it is obvious that
\begin{align*}
    &[u,v,w]= - [v,u,w]
    \quad and \quad  \circlearrowleft_{u,v,w} [u,v,w]=0.
\end{align*}
Furthermore, for  all $u_1, u_2, v_1, v_2, v_3 \in M$, by Eqs. \eqref{rep lts 1}-\eqref{rep lts 2} and Eq. \eqref{crochet-NS}, we have{\small
\begin{align*}
    &\{u_1,u_2,[\![ v_1,v_2,v_3]\!]\}-\{\{u_1,u_2,v_1\},v_2,v_3\} +\{\{u_1,u_2,v_2\},v_1,v_3\} -\{v_1,v_2,\{u_1,u_2,v_3\}\}\\
    =&\{u_1,u_2,\{ v_1,v_2,v_3\}^\ast\} + \{u_1,u_2,\{ v_1,v_2,v_3\}\}- \{u_1,u_2,\{ v_2,v_1,v_3\}\}+\{u_1,u_2,[v_1,v_2,v_3]\}\\
&-\{\{u_1,u_2,,v_1\},v_2,v_3\} +\{\{u_1,u_2,v_2\},v_1,v_3\} -\{v_1,v_2,\{u_1,u_2,v_3\}\}^\ast\\
=&\{u_1,u_2,\{ v_3,v_2,v_1\}\}- \{u_1,u_2,\{ v_3,v_1,v_2\}\}+ \{u_1,u_2,\{ v_1,v_2,v_3\}\}- \{u_1,u_2,\{ v_2,v_1,v_3\}\}\\
&+\{u_1,u_2,[v_1,v_2,v_3]\}-\{\{u_1,u_2,v_1\},v_2,v_3\} +\{\{u_1,u_2,v_2\},v_1,v_3\} -\{\{u_1,u_2,v_3\},v_2,v_1\}\\
&+ \{\{u_1,u_2,v_3\},v_1,v_2\}\\
=&\theta(Tu_2,[Tv_1,Tv_2,Tv_3])u_1 - \theta(Tv_2,Tv_3)\theta(Tu_2,Tv_1)u_1+ \theta(Tv_1,Tv_3)\theta(Tu_2,Tv_2)u_1\\&-\theta(Tv_2,Tv_1)\theta(Tu_2,Tv_3)u_1+\theta(Tv_1,Tv_2)\theta(Tu_2,Tv_3)u_1=0.
\end{align*}}
This implies that Eq.\eqref{NS-2} in Definition \ref{NS-DEf} holds. Similarly for  Eq.\eqref{NS-3}. Moreover, since $\mathcal{H}$ is a $3$-cocycle of $L$ with coefficients in $M$, then Eq.\eqref{NS-4} holds. Therfore, $ (L, \{\cdot,\cdot,\cdot\}, [\cdot,\cdot,\cdot])$ is a NS-Lie triple system.
\end{proof}

\begin{cor}
The subadjacent L.t.s $ (M,[\![\cdot,\cdot,\cdot]\!])$ of the above \textsf{NS-L.t.s} $(M,\{\cdot,\cdot,\cdot\}, [\cdot,\cdot,\cdot])$ is exactly
the L.t.s given in Corollary \ref{coro}.
\end{cor}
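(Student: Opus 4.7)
The plan is to verify the equality by direct substitution: unfold the definition of the subadjacent bracket $[\![\cdot,\cdot,\cdot]\!]$ from equation \eqref{crochet-NS} using the induced NS-Lie triple system structure \eqref{induced-NS}, and then match the resulting four summands with the four summands of $[\cdot,\cdot,\cdot]_T$ from Corollary \ref{coro}. No $L_\infty$ or cohomological machinery is needed; this is purely a bookkeeping check.

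Concretely, first I compute the starred piece. By definition, $\{u,v,w\}^\ast = \{w,v,u\} - \{w,u,v\} = \theta(Tv,Tu)w - \theta(Tu,Tv)w$, and since $D(x,y) = \theta(y,x) - \theta(x,y)$, this is exactly $D(Tu,Tv)w$. Next, $\{u,v,w\} = \theta(Tv,Tw)u$ and $\{v,u,w\} = \theta(Tu,Tw)v$, so $\{u,v,w\} - \{v,u,w\} = \theta(Tv,Tw)u - \theta(Tu,Tw)v$. Finally, $[u,v,w] = \mathcal{H}(Tu,Tv,Tw)$ directly.

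Assembling these pieces according to \eqref{crochet-NS},
\begin{align*}
[\![u,v,w]\!] &= \{u,v,w\}^\ast + \{u,v,w\} - \{v,u,w\} + [u,v,w] \\
&= D(Tu,Tv)w + \theta(Tv,Tw)u - \theta(Tu,Tw)v + \mathcal{H}(Tu,Tv,Tw),
\end{align*}
which is exactly the formula \eqref{ltsmod} defining $[u,v,w]_T$ in Corollary \ref{coro}. Hence the two ternary brackets on $M$ coincide.

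There is no substantive obstacle here: the result is essentially a tautology once the identification $\{\cdot,\cdot,\cdot\}^\ast = D(T\cdot,T\cdot)\cdot$ is observed. The only thing worth flagging is a sign/ordering sanity check on $D$, to make sure our convention $D(x,y) = \theta(y,x) - \theta(x,y)$ matches the order in which the arguments appear in $\{u,v,w\}^\ast = \{w,v,u\} - \{w,u,v\}$; with the conventions used in Section \ref{Section-2}, these line up correctly and no sign corrections are required.
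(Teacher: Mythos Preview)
Your proof is correct and is exactly the direct verification the paper has in mind; the paper states the corollary without proof, treating it as immediate from the definitions, and your unfolding of $[\![\cdot,\cdot,\cdot]\!]$ via \eqref{crochet-NS} and \eqref{induced-NS} to recover \eqref{ltsmod} is precisely that verification.
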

\begin{defi}
A  morphism from a \textsf{NS-L.t.s} $(L,\{\cdot,\cdot,\cdot\}, [\cdot,\cdot,\cdot])$ to $(L',\{\cdot,\cdot,\cdot\}', [\cdot,\cdot,\cdot]')$
is a linear map $ \psi: L \to L'$ satisfying 
\begin{align*}
    & \psi \{x,y,z\}= \{ \psi( x),\psi(y),\psi( z)\}'\quad and \quad   \psi [x,y,z]= [ \psi(x),\psi(y),\psi (z)]' \quad \forall x,y,z \in L.
\end{align*}
\end{defi}
\begin{pro}
Let $T$ and $T'$ be two generalized Reynolds operators on a \textsf{L.t.sRep} pair   $(L, [\cdot,\cdot, \cdot],\theta)$, and $(L,\{\cdot,\cdot,\cdot\}, [\cdot,\cdot,\cdot])$, $(L',\{\cdot,\cdot,\cdot\}', [\cdot,\cdot,\cdot]')$ be the induced \textsf{NS-L.t.s}. Let $(\phi,\psi)$ be a  morphism from $T$ to $T'$, then $\psi$ is a morphism from the
\textsf{NS-L.t.s} $(L,\{\cdot,\cdot,\cdot\}, [\cdot,\cdot,\cdot])$ to $(L',\{\cdot,\cdot,\cdot\}', [\cdot,\cdot,\cdot]')$.
\end{pro}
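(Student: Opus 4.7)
The plan is to verify directly that $\psi$ intertwines both ternary operations of the induced NS-structures, by unpacking the formulas from Theorem \ref{thm-NS} and then feeding the three compatibility conditions (\ref{c1})--(\ref{c3}) of a morphism of generalized Reynolds operators in a systematic way. By Theorem \ref{thm-NS}, the induced NS-L.t.s operations on $M$ (resp.\ on $M'$) are
\begin{align*}
 \{u,v,w\} &= \theta(Tv,Tw)u, \qquad [u,v,w] = \mathcal{H}(Tu,Tv,Tw),\\
 \{u',v',w'\}' &= \theta(T'v',T'w')u', \qquad [u',v',w']' = \mathcal{H}(T'v',T'w',T'w').
\end{align*}

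First I would check preservation of $\{\cdot,\cdot,\cdot\}$. Starting from $\psi\{u,v,w\} = \psi(\theta(Tv,Tw)u)$, apply condition (\ref{c2}) with $x=Tv$, $y=Tw$ to move $\psi$ inside, then apply (\ref{c1}) to rewrite $\phi\circ T = T'\circ \psi$ at each argument. This yields $\theta(T'\psi(v),T'\psi(w))\psi(u) = \{\psi(u),\psi(v),\psi(w)\}'$, as required.

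Next I would check preservation of $[\cdot,\cdot,\cdot]$ in exactly parallel fashion: $\psi[u,v,w] = \psi(\mathcal{H}(Tu,Tv,Tw))$, then condition (\ref{c3}) gives $\mathcal{H}(\phi(Tu),\phi(Tv),\phi(Tw))$, and condition (\ref{c1}) converts this into $\mathcal{H}(T'\psi(u),T'\psi(v),T'\psi(w)) = [\psi(u),\psi(v),\psi(w)]'$.

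There is no real obstacle here: the statement is a formal consequence of the definition of a morphism of generalized Reynolds operators together with the explicit form of the induced NS-structure given in Theorem \ref{thm-NS}. The only subtlety worth flagging is the order in which the three conditions are applied, namely (\ref{c2})/(\ref{c3}) first to pull $\psi$ across $\theta$ or $\mathcal{H}$, and then (\ref{c1}) to identify $\phi\circ T$ with $T'\circ\psi$ inside the resulting expression; no use of the associator-like axioms (\ref{NS-2})--(\ref{NS-4}) of the NS-structures is needed.
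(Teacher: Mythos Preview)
Your proposal is correct and follows exactly the route the paper takes: the paper's own proof simply says ``It is straightforward by Eq.~\eqref{c1}--\eqref{c3} and Eq.~\eqref{induced-NS}'', and what you wrote is precisely the unpacking of that sentence. One cosmetic remark: both $T$ and $T'$ act on the \emph{same} $M$, so there is no separate $M'$, and your displayed formula for $[u',v',w']'$ has a typo (the first argument should be $T'u'$, not $T'v'$); neither affects the argument.
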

\begin{proof}
It is straightforward  by Eq.\eqref{c1}-\eqref{c3} and Eq.\eqref{induced-NS}.
\end{proof}

Since $\lambda$-weighted Reynolds operators L.t.s is a particular generalized Reynolds operator on a \textsf{L.t.sRep} pair, then  the following corollary is obvious.
\begin{cor}\label{cor}
Let $(L,[\c,\c,\c],\mathcal{R} )$ be a $\lambda $-weighted Reynolds operator L.t.s. Then  $(L,\{\cdot,\cdot,\cdot\},\lambda[\c,\c,\c]\circ(\mathcal{R}\otimes\mathcal{R}\otimes\mathcal{R})  )$ is a \textsf{NS-L.t.s}, where
\begin{align*}
    &\{x,y,z\}= [x,\mathcal{R}y,\mathcal{R}z] \ \forall x,y,z\in L.
\end{align*}
\end{cor}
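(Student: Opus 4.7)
The plan is to obtain this corollary as an immediate specialization of the combination of Example \ref{Rey-Twisted} and Theorem \ref{thm-NS}, rather than verifying the four NS-identities \eqref{NS-1}--\eqref{NS-4} by hand.

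First I would recall that, by Example \ref{Rey-Twisted}, any $\lambda$-weighted Reynolds operator $\mathcal{R}$ on the L.t.s $(L,[\c,\c,\c])$ can be regarded as a generalized Reynolds operator $T:=\mathcal{R}$ on the \textsf{L.t.sRep} pair $(L,[\c,\c,\c],\mathfrak R)$ (the regular representation $M=L$, $\theta=\mathfrak R$), with twisting cocycle
\[
\mathcal{H}(x,y,z)\;=\;\lambda[x,y,z]\qquad \forall x,y,z\in L.
\]
This places us exactly in the setting of Theorem \ref{thm-NS}.

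Next I would apply Theorem \ref{thm-NS} to this data. That theorem guarantees that the formulas
\[
\{u,v,w\}_{NS}=\theta(Tv,Tw)u,\qquad [u,v,w]_{NS}=\mathcal{H}(Tu,Tv,Tw)
\]
equip $M=L$ with an NS-Lie triple system structure. Substituting $T=\mathcal{R}$, $\theta=\mathfrak R$ and the expression for $\mathcal{H}$, and using the definition $\mathfrak R(x,y)z=[z,x,y]$ recalled in Section \ref{Section-2}, the first formula becomes
\[
\{u,v,w\}_{NS}\;=\;\mathfrak R(\mathcal{R}v,\mathcal{R}w)u\;=\;[u,\mathcal{R}v,\mathcal{R}w],
\]
and the second becomes
\[
[u,v,w]_{NS}\;=\;\lambda[\mathcal{R}u,\mathcal{R}v,\mathcal{R}w]\;=\;\bigl(\lambda[\c,\c,\c]\circ(\mathcal{R}\o\mathcal{R}\o\mathcal{R})\bigr)(u,v,w),
\]
which is precisely the pair of operations in the statement of the corollary.

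There is essentially no hard step here: the only thing to check is that the two substitutions above really reproduce the operations in the corollary, which is mechanical. The only mild subtlety to flag is the bookkeeping with the regular representation, where one has to remember that $\theta(a,b)c=\mathfrak R(a,b)c=[c,a,b]$ (and correspondingly $D(a,b)c=[a,b,c]$), so that the arguments $Tv,Tw$ of $\theta$ in Theorem \ref{thm-NS} end up in the last two slots of the ternary bracket, matching the definition $\{x,y,z\}=[x,\mathcal{R}y,\mathcal{R}z]$ in the corollary. Everything else, including the fact that $\mathcal{H}=\lambda[\c,\c,\c]$ is a legitimate $3$-cocycle on the regular representation (which is what makes Example \ref{Rey-Twisted} work), has already been established earlier in the paper.
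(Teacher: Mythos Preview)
Your proposal is correct and follows exactly the same route as the paper: the corollary is stated immediately after Theorem \ref{thm-NS} with the remark that it is ``obvious'' because a $\lambda$-weighted Reynolds operator is a particular generalized Reynolds operator (Example \ref{Rey-Twisted}) on the regular representation. Your careful verification that the substitutions $T=\mathcal{R}$, $\theta=\mathfrak R$, $\mathcal{H}=\lambda[\c,\c,\c]$ reproduce the stated operations is precisely the content the paper leaves implicit.
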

\begin{ex}
 Example \ref{Expl} can be enhanced to NS-Lie triple system. More precisely, we make a $\lambda$-weighted Reynolds operator on a Lie algebra $\mathcal{R}$ to construct  a generalized Reynolds operator on the induced   L.t.s $(L,[[\c,\c],\c]  )$. Moreover by Corollary \ref{cor}, we obtain   a \textsf{NS-L.t.s} on $W_{\geq0}$ defined by the structures 
\begin{align*}
    \{l_m,l_n,l_p\}&= [l_m,\mathcal{R}(l_n),\mathcal{R}(l_p)]=\frac{ 1}{\lambda^2(n+1)(p+1) }[l_m,l_n,l_p]\\
    &=\frac{(m-n)(m+n-p))}{\lambda^2(n+1)(p+1) }l_{m+n+p},\\
    2\lambda[\mathcal{R}(l_m),\mathcal{R}(l_n),\mathcal{R}(l_p)]&=2\frac{(m-n)(m+n-p))}{\lambda^2 (m+1)(n+1)(p+1)}l_{m+n+p} \quad \forall m,n,p\in \mathbb{Z}_+. \end{align*} 
    So $\mathcal{R}$  is  a generalized Reynolds operator on the induced   L.t.s $(L,[\c,\c,\c]  )$. 
    \end{ex}
    \begin{ex}More generally, consider the $\lambda $-weighted Reynolds operator
       $\mathcal{R}:B(q)_{\geq0} \to B(q)_{\geq0} $   on the L.t.s $(B(q)_{\geq0},[\cdot,\cdot,\cdot])$ defined in  Example \ref{Block}. Its follows from  Corollary \ref{cor}, that  $B(q)_{\geq0}$ carries a \textsf{NS}-L.t.s  structure $(B(q)_{\geq0},\{\cdot,\cdot,\cdot\},\lambda[\cdot,\cdot,\cdot])$
   given by
 \begin{align*}
&\{L_{m,i},L_{n,j},L_{p,k}\}= \frac{(n(i + q) - m(j + q)) (p(i + j+q) - (m+n)(p + q))}{ \lambda^2(n+j + 1) (p+k + 1)}L_{m+n+p,i+j+k}.
\end{align*}

 \end{ex}
   \subsection{From \textsf{NS}-Lie algebras to \textsf{NS}-Lie triple systems}
Now, we recall the definition of \textsf{NS}-Lie algebra introduced by A. Das in \cite{Das-Twisted}, so that we can construct a  \textsf{NS}-Lie triple system starting from a  \textsf{NS}-Lie algebra.
\begin{defi}
A \textsf{NS}-Lie algebra is a vector space L together with bilinear operations $\circ,\curlyvee  : L \to L$ in which $\curlyvee$ is skew-symmetric
and satisfying the following two identities:
\begin{align}\label{NS-Lie-1}
    & (x \ast y) \circ z - x\circ ( y \circ z)  +  y\circ( x  \circ z) =0,\\\label{NS-Lie-2}
    & x \curlyvee( y \ast z) + y \curlyvee ( z \ast x) + z \curlyvee ( x \ast z)  +  x\circ( y  \curlyvee z) + y  \circ (z\curlyvee x)+ z \circ (x\curlyvee y) =0  \quad \forall x,y,z \in L,
\end{align}
 where   $ x \ast y =  x \circ y - y\circ x + x\curlyvee y.$
\end{defi}

\begin{pro}
Let $(L,\circ,\curlyvee )$ be a \textsf{NS}-Lie algebra. Then $(L,\ast)$ is a Lie algebra  called the \textbf{adjacent} Lie algebra.
\end{pro}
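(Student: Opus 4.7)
The plan is to verify the two defining axioms of a Lie algebra for the bracket $\ast$: skew-symmetry and the Jacobi identity. Both will follow by direct calculation from axioms \eqref{NS-Lie-1} and \eqref{NS-Lie-2} together with the skew-symmetry of $\curlyvee$; no structural trick appears to be needed, only careful bookkeeping.

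Skew-symmetry is immediate: unfolding the definition and using $y\curlyvee x=-x\curlyvee y$,
\[
y\ast x \;=\; y\circ x - x\circ y + y\curlyvee x \;=\; -(x\circ y - y\circ x) - x\curlyvee y \;=\; -(x\ast y).
\]

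For the Jacobi identity I would compute the cyclic sum $\circlearrowleft_{x,y,z}(x\ast y)\ast z$. Expanding each summand,
\[
(x\ast y)\ast z = (x\ast y)\circ z - z\circ (x\ast y) + (x\ast y)\curlyvee z,
\]
and replacing $(x\ast y)\circ z$ by $x\circ(y\circ z) - y\circ(x\circ z)$ using \eqref{NS-Lie-1}, and similarly expanding $z\circ(x\ast y)=z\circ(x\circ y)-z\circ(y\circ x)+z\circ(x\curlyvee y)$. After cyclically summing these first two pieces, the six resulting triple $\circ$-compositions pair up and cancel, leaving the identity
\[
\circlearrowleft_{x,y,z}\bigl[(x\ast y)\circ z - z\circ(x\ast y)\bigr] \;=\; -\circlearrowleft_{x,y,z} z\circ(x\curlyvee y).
\]

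For the remaining cyclic sum, the skew-symmetry of $\curlyvee$ yields
\[
\circlearrowleft_{x,y,z}(x\ast y)\curlyvee z \;=\; -\circlearrowleft_{x,y,z} z\curlyvee(x\ast y),
\]
and axiom \eqref{NS-Lie-2} applied to the cyclic triple $(x,y,z)$ gives
\[
z\curlyvee(x\ast y) + x\curlyvee(y\ast z) + y\curlyvee(z\ast x) \;=\; -\circlearrowleft_{x,y,z} z\circ(x\curlyvee y),
\]
hence $\circlearrowleft_{x,y,z}(x\ast y)\curlyvee z \;=\; \circlearrowleft_{x,y,z} z\circ(x\curlyvee y)$. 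The two contributions cancel and Jacobi follows. The only real obstacle is clerical: tracking the six $\circ$-triple terms with their signs to confirm the pairwise cancellation in the first step; once that is done, the residual $\curlyvee$-part is dispatched in a single line by \eqref{NS-Lie-2}.
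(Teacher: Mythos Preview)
Your proof is correct. The skew-symmetry check is immediate, and your decomposition of the Jacobiator into the $\circ$-part (where the six triple compositions cancel pairwise after applying \eqref{NS-Lie-1}) and the $\curlyvee$-part (handled by \eqref{NS-Lie-2}) is exactly the right argument. Note that the paper does not supply its own proof of this proposition: it is stated as a known fact imported from Das's work \cite{Das-Twisted} on NS-Lie algebras, so your direct verification is the only proof present here.
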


Let $(L,[\c,\c] , \rho)$ be a \textsf{LieRep} pair. A skew-symmetric bilinear map  $H:L \times L  \to M $ is a $2$-cocycle if it  satisfies  
\begin{align}
    &\circlearrowleft_{x,y,z}\big(\rho(x) H(y,z) + H(x,[y,z])\big) =0  \ \ \ \  \forall x,y,z \in L.
\end{align}
A linear map $T  : M \to L$ is said to be a \textbf{generalized Reynolds operator} if it satisfies :
\begin{align}
   [T(u),T(v)]= T \Big( \rho(Tu)v - \rho(Tv)u + H(Tu,Tv)\Big)  \ \ \ \ \ \forall u,v \in M.
\end{align}
Define  two multiplications: \begin{equation}
    u\circ v=\rho(Tu)v \quad \text{and} \quad u\curlyvee v=H(Tu,Tv) \ \ \ \ \ \forall u,v \in M.
\end{equation}
\begin{pro}
Under the above notations, $(M,\circ,\curlyvee )$  is a \textsf{NS}-Lie algebra.
\end{pro}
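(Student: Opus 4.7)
\medskip

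The plan is to verify directly the two axioms \eqref{NS-Lie-1} and \eqref{NS-Lie-2} in the definition of an \textsf{NS}-Lie algebra, together with the skew-symmetry of $\curlyvee$. Skew-symmetry is immediate, since $u \curlyvee v = H(Tu,Tv)$ and $H$ is skew-symmetric by hypothesis. The key observation, which will be used throughout, is that the associated multiplication $\ast$ satisfies
\begin{equation*}
u \ast v \;=\; u\circ v - v\circ u + u\curlyvee v \;=\; \rho(Tu)v - \rho(Tv)u + H(Tu,Tv),
\end{equation*}
so that by the defining identity of a generalized Reynolds operator on the \textsf{LieRep} pair $(L,[\cdot,\cdot],\rho)$ one has $T(u \ast v) = [Tu,Tv]$. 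This is the bridge that lets us translate everything back to identities in the Lie algebra $L$ with its representation $\rho$.

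First, I would establish \eqref{NS-Lie-1}. Writing $(u \ast v)\circ w = \rho\bigl(T(u\ast v)\bigr)w = \rho([Tu,Tv])w$ and using that $\rho$ is a Lie algebra representation gives
\begin{equation*}
(u\ast v)\circ w \;=\; \rho(Tu)\rho(Tv)w - \rho(Tv)\rho(Tu)w \;=\; u\circ(v\circ w) - v\circ(u\circ w),
\end{equation*}
which is exactly \eqref{NS-Lie-1}. This step is essentially a one-line calculation once the identity $T(u\ast v)=[Tu,Tv]$ is in place.

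Next, I would verify \eqref{NS-Lie-2}. Expanding each term via the definitions yields
\begin{equation*}
u\curlyvee(v\ast w) \;=\; H(Tu,[Tv,Tw]),\qquad u\circ(v\curlyvee w) \;=\; \rho(Tu)H(Tv,Tw),
\end{equation*}
and similarly for the two cyclic permutations. Taking the cyclic sum, \eqref{NS-Lie-2} becomes
\begin{equation*}
\circlearrowleft_{Tu,Tv,Tw}\Bigl(\rho(Tu)H(Tv,Tw) + H(Tu,[Tv,Tw])\Bigr) = 0,
\end{equation*}
which is precisely the $2$-cocycle condition for $H$ evaluated on $(Tu,Tv,Tw)$. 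Hence \eqref{NS-Lie-2} holds.

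The argument is essentially formal and involves no serious obstacle: once the identity $T(u\ast v)=[Tu,Tv]$ is noted, axiom \eqref{NS-Lie-1} reduces to the fact that $\rho$ is a representation, and axiom \eqref{NS-Lie-2} reduces to the $2$-cocycle condition satisfied by $H$. The only minor care needed is the correct bookkeeping of signs when expanding $u\ast v$ inside $u\curlyvee(v\ast w)$, and noting that the contributions coming from the $H$-part of $v\ast w$ vanish after cyclic summation by skew-symmetry of $H$ paired with the cyclic sum on the arguments.
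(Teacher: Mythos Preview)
Your proof is correct. The paper does not actually supply a proof of this proposition---it is stated without justification---so there is nothing to compare against; your direct verification via the identity $T(u\ast v)=[Tu,Tv]$, which reduces \eqref{NS-Lie-1} to the representation property of $\rho$ and \eqref{NS-Lie-2} to the $2$-cocycle condition on $H$, is the expected argument. One small remark: your closing comment about expanding $v\ast w$ inside $u\curlyvee(v\ast w)$ is superfluous, since $u\curlyvee(v\ast w)=H\bigl(Tu,T(v\ast w)\bigr)=H(Tu,[Tv,Tw])$ follows directly from the key identity without any further expansion.
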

\begin{thm}\label{Thm-construction} Let
  $(L,\circ ,\curlyvee)$ be a \textsf{NS}-Lie algebra. Then $(L,\{\cdot,\cdot,\cdot\},[\cdot,\cdot,\cdot])$ is a  \textsf{NS}-Lie  triple system, where
 \begin{align}
     &\{x,y,z\} = z \circ (y \circ x),\\
     &[x,y,z]=  (x \ast y) \curlyvee z - z \circ (x\curlyvee y ) \ \ \forall x,y,z \in L.
 \end{align}
Note that $[\![x,y,z]\!]=(x\ast y)\ast z$.
\end{thm}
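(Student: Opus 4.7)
The strategy is a direct verification of the four identities \eqref{NS-1}--\eqref{NS-4} of Definition \ref{NS-DEf} for the two ternary operations defined by the formulas, relying only on the two NS-Lie algebra axioms \eqref{NS-Lie-1}--\eqref{NS-Lie-2} and the skew-symmetry of $\curlyvee$. The auxiliary claim $[\![x,y,z]\!]=(x\ast y)\ast z$ will be obtained as a by-product of the first step.

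First I would unfold $[\![x,y,z]\!]$ from \eqref{crochet-NS}. Computing $\{x,y,z\}^\ast = x\circ(y\circ z)-y\circ(x\circ z)$ directly from the definition, and noting that $\{x,y,z\}-\{y,x,z\} = -z\circ(x\circ y-y\circ x) = -z\circ(x\ast y)+z\circ(x\curlyvee y)$, the two copies of $z\circ(x\curlyvee y)$ cancel against the corresponding term in $[x,y,z]$. Applying then axiom \eqref{NS-Lie-1} in the form $x\circ(y\circ z)-y\circ(x\circ z)=(x\ast y)\circ z$, one obtains $[\![x,y,z]\!]=(x\ast y)\circ z-z\circ(x\ast y)+(x\ast y)\curlyvee z=(x\ast y)\ast z$, which is precisely the Lie triple bracket of the adjacent Lie algebra in the sense of Theorem \ref{Lie.L.t.s}.

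Next I would dispatch \eqref{NS-1}: skew-symmetry $[x,y,z]=-[y,x,z]$ follows from the skew-symmetry of the Lie bracket $\ast$ together with that of $\curlyvee$, and the cyclic identity $\circlearrowleft_{x,y,z}[x,y,z]=0$ reduces, after rewriting $(x\ast y)\curlyvee z = -z\curlyvee(x\ast y)$, exactly to the second NS-Lie axiom \eqref{NS-Lie-2}. For \eqref{NS-2}, using $\{x_1,x_2,u\}=u\circ(x_2\circ x_1)$, the identity asserts that right multiplication by $x_2\circ x_1$ commutes in a controlled way with the iterated left multiplications building $[\![\cdot,\cdot,\cdot]\!]$; every four-fold composition is brought to a common normal form by repeated use of \eqref{NS-Lie-1} applied to $(a\ast b)\circ c$, and the three terms on the right-hand side then telescope against the left after the reduction of Step 1. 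The same machinery, together with the explicit formula for $\{\cdot,\cdot,\cdot\}^\ast$, yields \eqref{NS-3}.

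The main obstacle lies in \eqref{NS-4}, which mixes all three operations $\circ$, $\curlyvee$ and $\ast$. I expect to expand both sides of \eqref{NS-4} and group the resulting monomials by their \emph{$\curlyvee$-signature}: the purely associative $\circ$-terms are handled by iterated application of \eqref{NS-Lie-1}, while the $\curlyvee$-terms, which carry a cyclic signature, are matched using \eqref{NS-Lie-2}. A cleaner functorial alternative is available and worth mentioning in the write-up: since NS-Lie algebras are the underlying algebraic structures of generalized Reynolds operators on Lie algebras \cite{Das-1}, one may realize $(L,\circ,\curlyvee)$ through such an operator $T:M\to\mathfrak g$, invoke the proposition from Section~\ref{Section-3} that $T$ is also a generalized Reynolds operator on the induced L.t.s, and then apply Theorem \ref{thm-NS}; a term-by-term comparison with \eqref{induced-NS} identifies the resulting NS-L.t.s with $(L,\{\cdot,\cdot,\cdot\},[\cdot,\cdot,\cdot])$ and thus avoids the explicit bookkeeping in \eqref{NS-4}.
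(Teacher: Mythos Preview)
Your direct-verification route is essentially the paper's own proof: both arguments reduce \eqref{NS-1} to the skew-symmetry of $\ast,\curlyvee$ and to axiom \eqref{NS-Lie-2}, and then check \eqref{NS-2}--\eqref{NS-4} by expanding and repeatedly applying \eqref{NS-Lie-1}, with \eqref{NS-Lie-2} absorbing the residual $\curlyvee$-terms in \eqref{NS-4}. Your preliminary computation of $[\![x,y,z]\!]=(x\ast y)\ast z$ is correct and is implicitly used by the paper throughout.

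The functorial alternative you sketch is a genuinely different (and cleaner) argument, but as written it has a gap: you assert that one ``may realize $(L,\circ,\curlyvee)$ through such an operator $T$'', without saying how. The missing observation is that the identity map $Id_L$ is a generalized Reynolds operator on the adjacent \textsf{LieRep} pair $(L,\ast,\rho)$ with $\rho(x)y=x\circ y$ and $2$-cocycle $H=\curlyvee$; axiom \eqref{NS-Lie-1} says precisely that $\rho$ is a representation, and \eqref{NS-Lie-2} is precisely the $2$-cocycle condition for $\curlyvee$. Then Proposition~3.10 and Theorem~\ref{thm-NS}, together with Theorem~\ref{Lie.L.t.s} and the formula $\omega(x,y,z)=H(x\ast y,z)-\rho(z)H(x,y)$ for the induced $3$-cocycle, give exactly $\{x,y,z\}=\theta_\rho(y,z)x=z\circ(y\circ x)$ and $[x,y,z]=(x\ast y)\curlyvee z-z\circ(x\curlyvee y)$. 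Once this realization is spelled out, your alternative avoids all the bookkeeping in \eqref{NS-4}; the paper instead carries out the full computation by hand.
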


\begin{proof}
Let $x_1,x_2$ and $y_1,y_2,y_3 \in L$. It is obvious that the bracket $[\cdot,\cdot,\cdot]$ is skew-symmetric and $ \circlearrowleft_{y_1,y_2,y_3} [y_1,y_2,y_3]=0$. According to \eqref{NS-Lie-1},  we have
\begin{align*}
    & \{\{x_1,x_2,y_1\},y_2,y_3\} -\{\{x_1,x_2,y_2\},y_1,y_3\} +\{y_1,y_2,\{x_1,x_2,y_3\}\}^\ast- \{x_1,x_2,[\![ y_1,y_2,y_3]\!]\}\\=& \underline{\underline{{ y_3 \circ \Big( y_2 \circ (y_1 \circ (x_2\circ x_1))  \Big) - y_3 \circ \Big( y_1 \circ (y_2 \circ (x_2\circ x_1))  \Big)}}} + \underline{ y_1 \circ \Big( y_2 \circ (y_3 \circ (x_2\circ x_1))}  \Big)\\
    &\underline{-  y_2 \circ \Big( y_1 \circ (y_3 \circ (x_2\circ x_1))  \Big)}- \Big(( y_1 \ast  y_2)  \ast y_3\Big)  \circ (x_2\circ x_1) \\
   \overset{\eqref{NS-Lie-1}}{=}& (y_1 \ast y_2) \circ  \Big(y_3 \circ (x_2\circ x_1)\Big) -   y_3 \circ \Big(  (y_1\ast y_2) \circ (x_2\circ x_1))   \Big) - \Big(( y_1 \ast  y_2)  \ast y_3\Big)  \circ (x_2\circ x_1)\\
  =&0.
\end{align*}
Then Eq.\eqref{NS-2} holds. Similarly
\begin{align*}
   & \{\{y_1,y_2,y_3\},x_1,x_2\} - \{\{y_1,y_2,y_3\},x_2,x_1\} + \{\{y_1,x_2,x_1\},y_2,y_3\} -\{\{y_1,x_1,x_2\},y_2,y_3\}\\
   & + \{y_1,[x_1,x_2,y_2],y_3\}+  \{y_1,y_2,[x_1,x_2,y_3]\}\\
   =& \underline{\underline{x_2 \circ \Big( x_1 \circ (y_3 \circ (y_2 \circ y_1)) \Big) - x_1 \circ \Big( x_2 \circ (y_3 \circ (y_2 \circ y_1)) \Big) }}+\underline{ y_3 \circ \Big( y_2 \circ (x_1 \circ (x_2,y_1)) }\Big)  \\
   &\underline{-  y_3 \circ \Big( y_2 \circ (x_2 \circ (x_1,y_1)) \Big) } + y_3 \circ \Big( ((x_1 \ast x_2)\ast y_2) \circ y_1 \Big) +  \Big( (x_1 \ast x_2)\ast y_3\Big) \circ (y_2 \circ y_1) \\
    \overset{\eqref{NS-Lie-1}}{=} &  y_3 \circ (y_2 \circ \Big( (x_1 \ast x_2) \circ y_1 \Big)- (x_1 \ast x_2) \circ \Big(y_3 \circ (y_2 \circ y_1)\Big)+ y_3 \circ \Big( ((x_1 \ast x_2)\ast y_2) \circ y_1 \Big) \\&+  \Big( (x_1 \ast x_2)\ast y_3\Big) \circ (y_2 \circ y_1)\\
    =& y_3 \circ \Big( (x_1 \ast x_2) \circ (y_2 \circ y_1) \Big) - y_3 \circ \Big( (x_1 \ast x_2) \circ (y_2 \circ y_1) \Big) = 0.
\end{align*}
Therefore Eq.\eqref{NS-3} holds. Also, we have
\begin{align*}
    &[[\![x_1,x_2, y_1]\!],y_2,y_3]+ [y_1,[\![x_1,x_2, y_2]\!],y_3]+ [y_1,y_2,[\![x_1,x_2, y_3]\!]] + \{[x_1,x_2, y_1],y_2,y_3\}\\&- \{[x_1,x_2, y_2],y_1,y_3\} +  \{y_1,y_2,[x_1,x_2, y_3]\}^\ast-  \{x_1,x_2,[ y_1,y_2,y_3]\}^\ast- [x_1,x_2,[\![ y_1,y_2,y_3]\!]]\\
    =& \Big (((x_1 \ast x_2)\ast y_1 )\ast y_2\Big) \curlyvee  y_3 + \cancel{
  \Big ((y_1 \ast y_2)\ast y_3\Big) \circ (x_1\curlyvee  x_2)}- y_3 \circ \Big(  ((x_1 \ast x_2)\ast y_1) \curlyvee  y_2\Big)\\
  & + \Big (y_1 \ast ((x_1 \ast x_2)\ast y_2)\Big ) \curlyvee  y_3 -
  y_3 \circ \Big( y_1   \curlyvee ((x_1 \ast x_2)\ast y_2)     \Big ) + (y_1 \ast y_2)
  \curlyvee \Big(  (x_1 \ast x_2)\ast y_3 \Big )\\
 & -  \Big(  (x_1 \ast x_2)\ast y_3 \Big )  \circ (y_1 \curlyvee y_2) +  y_3 \circ \Big( y_2  \circ ((x_1\ast  x_2) \curlyvee y_1)- \cancel{ y_3 \circ \Big( y_2  \circ ( y_1   \circ (x_1\curlyvee  x_2))\Big)  }\\
 & -  y_3 \circ \Big( y_1  \circ ((x_1\ast  x_2) \curlyvee y_2) + y_1 \circ \Big( y_2\circ (
 (x_1 \ast x_2)\curlyvee  y_3 ) \Big) +  \cancel{ y_3 \circ \Big( y_1  \circ ( y_2   \circ (x_1\curlyvee  x_2))\Big)  } \\& -  y_2  \circ\Big(  y_1 \circ( (x_1 \ast x_2) \curlyvee y_3)\Big)   - \cancel{ y_1 \circ \Big( y_2  \circ ( y_3   \circ (x_1\curlyvee  x_2)\Big)  } + \cancel{ y_2 \circ \Big( y_1  \circ ( y_3   \circ (x_1\curlyvee  x_2)\Big)  }\\
 & -     x_1 \circ \Big( x_2  \circ (y_1 \ast y_2) \curlyvee y_3 \Big) +   x_1 \circ \Big( x_2  \circ (y_3  \circ (y_1 \curlyvee y_2)) \Big) +    x_2 \circ \Big( x_1  \circ (y_1 \ast y_2) \curlyvee y_3 \Big)\\
 & -   x_2 \circ \Big( x_1  \circ (y_3  \circ (y_1 \curlyvee y_2)) \Big)- (x_1 \ast x_2) \curlyvee \Big ( (y_1 \ast y_2) \ast y_3\Big )\\
  \overset{\eqref{NS-Lie-1}}{=} &\underline{ \Big (((x_1 \ast x_2)\ast y_1 )\ast y_2\Big) \curlyvee  y_3} - y_3 \circ \Big(  ((x_1 \ast x_2)\ast y_1) \curlyvee  y_2\Big) +\underline{ \Big (y_1 \ast ((x_1 \ast x_2)\ast y_2)\Big ) \curlyvee  y_3}\\
 & - y_3 \circ \Big( y_1   \curlyvee ((x_1 \ast x_2)\ast y_2)     \Big ) + (y_1 \ast y_2)
  \curlyvee \Big(  (x_1 \ast x_2)\ast y_3 \Big )+  y_3 \circ \Big( y_2  \circ ((x_1\ast  x_2) \curlyvee y_1) \\
  &  -  y_3 \circ \Big( y_1  \circ ((x_1\ast  x_2) \curlyvee y_2) + y_1 \circ \Big( y_2\circ (
 (x_1 \ast x_2)\curlyvee  y_3 ) \Big) -  y_2  \circ\Big(  y_1 \circ( (x_1 \ast x_2) \curlyvee y_3)\Big)\\
 & -   x_1 \circ \Big( x_2  \circ (y_1 \ast y_2) \curlyvee y_3 \Big) +  x_2 \circ \Big( x_1  \circ (y_1 \ast y_2) \curlyvee y_3 \Big) \underline{\underline{+  x_1 \circ \Big( x_2  \circ (y_3  \circ (y_1 \curlyvee y_2)) \Big)}}\\
 &  \underline{\underline{- x_2 \circ \Big( x_1  \circ (y_3  \circ (y_1 \curlyvee y_2)) \Big)}}- (x_1 \ast x_2) \curlyvee \Big ( (y_1 \ast y_2) \ast y_3\Big )\underline{\underline{-\Big( (x_1 \ast x_2)\ast y_3\Big)\circ (y_1 \curlyvee y_2)}}\\
 \overset{\eqref{NS-Lie-1}}{=} &  \Big ((x_1 \ast x_2)\ast( y_1 \ast y_2)\Big) \curlyvee  y_3 + y_3 \circ \Big(  ((x_1 \ast x_2)\circ ( y_1 \curlyvee  y_2\Big) -  y_3 \circ \Big((x_1 \ast x_2)\ast y_1)\curlyvee y_2 \Big ) \\
 &- y_3 \circ \Big(  y_1 \curlyvee ( (x_1 \ast x_2)\ast y_2)\Big) +  (y_1 \ast y_2) \curlyvee \Big((x_1 \ast x_2)\ast y_3 \Big) +  y_3 \circ \Big( y_2 \circ ((x_1 \ast x_2) \curlyvee  y_1 ) \Big)\\
 & - y_3 \circ \Big( y_1 \circ ((x_1 \ast x_2) \curlyvee  y_2 ) \Big)\underline{\underline{+ y_1 \circ \Big( y_2 \circ ((x_1 \ast x_2) \curlyvee  y_3 ) \Big) -y_2 \circ \Big( y_1 \circ ((x_1 \ast x_2) \curlyvee  y_3 ) \Big)}}\\
 & -  \underline{ x_1 \circ \Big( x_2  \circ (y_1 \ast y_2) \curlyvee y_3 \Big) +  x_2 \circ \Big( x_1  \circ (y_1 \ast y_2) \curlyvee y_3 \Big)}- (x_1 \ast x_2) \curlyvee \Big ( (y_1 \ast y_2) \ast y_3\Big )\\
 \overset{\eqref{NS-Lie-2}}{=} &  \Big ((x_1 \ast x_2)\ast( y_1 \ast y_2)\Big) \curlyvee  y_3 +\underline{ y_3 \circ \Big(  ((x_1 \ast x_2)\circ ( y_1 \curlyvee  y_2\Big) -  y_3 \circ \Big((x_1 \ast x_2)\ast y_1)\curlyvee y_2 \Big )} \\
 &- \underline{y_3 \circ \Big(  y_1 \curlyvee ( (x_1 \ast x_2)\ast y_2)\Big) }+  (y_1 \ast y_2) \curlyvee \Big((x_1 \ast x_2)\ast y_3 \Big) + \underline{ y_3 \circ \Big( y_2 \circ ((x_1 \ast x_2) \curlyvee  y_1 ) \Big)}\\
 & -\underline{ y_3 \circ \Big( y_1 \circ ((x_1 \ast x_2) \curlyvee  y_2 ) \Big) }-
 (x_1 \ast x_2) \circ \Big( (y_1 \ast y_2) \curlyvee y_3\Big)+ (y_1 \ast y_2) \circ \Big( (x_1 \ast x_2) \curlyvee y_3\Big)\\
 &- (x_1 \ast x_2) \curlyvee \Big ( (y_1 \ast y_2) \ast y_3\Big )\\
  \overset{\eqref{NS-Lie-2}}{=}  & - \Big ( (x_1 \ast x_2) \curlyvee \Big ( (y_1 \ast y_2) \ast y_3\Big ) +  (y_1 \ast y_2) \curlyvee \Big( y_3\ast(x_1 \ast x_2) \Big)
  +   y_3\curlyvee\Big ((x_1 \ast x_2)\ast( y_1 \ast y_2)\Big)\\
  &+ (x_1 \ast x_2)  \circ \Big( (y_1 \ast y_2) \curlyvee y_3\Big)
  + ( y_1 \ast y_2) \circ \Big( y_3 \curlyvee  (x_1 \ast x_2) \Big)
  +  y_3 \circ \Big(  (x_1 \ast x_2)  \curlyvee  (y_1 \ast y_2) \Big) \Big)\\
  &=0.
    \end{align*}
    Then \eqref{NS-4} holds and $(L,\{\cdot,\cdot,\cdot\},[\cdot,\cdot,\cdot])$ is a  \textsf{NS}-Lie  triple system.
\end{proof}

 We have shown  that  Lie
algebras, NS-Lie algebras, Lie triple systems and
NS-Lie triple systems are closely related  in the sense of commutative diagram of categories as follows :
\begin{equation}\label{Diagramme1}
\begin{split}
{\xymatrix{
\ar[rr] \mbox{\bf   NS-Lie alg.$(L ,\circ,\curlyvee)$ }\ar[d]_{\mbox{}}\ar[rr]^{\mbox{}}
                && \mbox{\bf NS-L.t.s $(L,\{\cdot,\cdot,\cdot\} ,[\cdot,\cdot,\cdot])$ }\ar[d]_{\mbox{ }}\\
\ar[rr] \mbox{\bf LieRep pair. $(L,[\cdot,\cdot],\rho)$}\ar@<-1ex>[u]_{\mbox{}}\ar[rr]^{\mbox{ }}
                && \mbox{\bf L.t.sRep pair  $(L,[[\cdot,\cdot],\cdot],\theta_\rho).$}\ar@<-1ex>[u]_{\mbox{}}}
}\end{split}
\end{equation}

\end{document}